\theoremstyle{plain}
\newtheorem{theorem}{Theorem}[section]
\newtheorem{proposition}[theorem]{Proposition}
\newtheorem{lemma}[theorem]{Lemma}
\theoremstyle{definition}
\theoremstyle{remark}
\DeclareMathOperator *{\di}{div} 
\newcommand{\dist}{\operatorname{dist}}
\newcommand{\diam}{\operatorname{diam}}
\newcommand{\bR}{\mathbb{R}}
\newcommand{\bZ}{\mathbb{Z}}
\newcommand\cL{\mathcal{L}}
\providecommand{\set}[1]{\{#1\}}
\providecommand{\abs}[1]{\lvert#1\rvert}
\providecommand{\norm}[1]{\lVert#1\rVert}
\DeclareMathOperator *{\essosc}{ess\ osc}
\DeclareMathOperator *{\osc}{osc}
\DeclareMathOperator *{\essinf}{ess\ inf}
\begin{document}
\title[Lipschitz regularity of homogenization with continuous coefficients]
{Lipschitz regularity of homogenization with continuous coefficients: Dirichlet problem}

\author[S. Lee]{Sungjin Lee}
\address{Sungjin Lee \\ Aristotle University of Thessaloniki \\ School of Mathematics \\ 541 24 Thessaloniki, Greece}
\email{slee@math.auth.gr}
\thanks{This work was supported (in part) by the Yonsei University Research Fund(Post Doc. Researcher Supporting Program) of 2022(2023) (project no.: 2022-12-0135), and this project is carried out within the framework of the National Recovery and Resilience Plan Greece 2.0, funded by the European Union – NextGenerationEU (Implementation body: HFRI, Project Name: CRITPDE, No. 14952).}

\subjclass[2020]{Primary 35J57, 35B27 ; Secondary 35B65, 35J47}
\keywords{Elliptic systems; Homogenization; Green's matrix; Dini mean oscillation}

\begin{abstract}
We study uniform Lipschitz regularity estimates for elliptic systems in divergence form with continuous coefficients, based on rapidly oscillating periodic coefficients derived from homogenization theory. We extend a result by Avellaneda and Lin [Comm. Pure Appl. Math. 40 (1987), pp. 803-847] by minimizing all regularity conditions of the given data to integral conditions. We remark that the coefficients of an elliptic operator have Dini mean oscillation, which corresponds to the results of the latest general regularity theory.
\end{abstract}
 
\maketitle

\section{Introduction and main results}
We consider an elliptic operator $\cL_\varepsilon$ in divergence form
\[				
\cL_\varepsilon =-D_i\bigg\{ a_{ij}^{\alpha\beta}\big( \frac{x}{\varepsilon}\big)D_j\bigg\}= -\di \bigg\{ \mathbf{A}\big(\frac{x}{\varepsilon}\big)\nabla\bigg\},\quad \varepsilon>0,\\
\]
where the coefficient matrix $\mathbf{A}:=(a^{\alpha\beta}_{ij}(y))$  with $1\leq i,j\leq n $ and $1\leq \alpha,\beta\leq m$ are symmetric and satisfy the ellipticity condition:
\begin{equation}				\label{ellipticity}
\lambda \abs{\xi}^2 \abs{\eta}^2 \le a^{\alpha\beta}_{ij}\xi_i\xi_j\eta_\alpha \eta_\beta \quad\hbox{and}\quad \norm{A}_{L^\infty(\bR^n)}\leq \Lambda
\end{equation}
for any $y,\xi\in \bR^n$, $\eta\in\bR^m$ , some $\lambda$, $\Lambda$ are positive constants and $\Omega$ is a bounded domain in $\bR^n$. Here and throughout the paper, we apply the summation convention over repeated indices.

We assume that the coefficient matrix $\mathbf{A}(y)$ is real and $1$-periodic, that is, we have
\begin{equation} \label{perodicity}
\mathbf{A}(y+z)=\mathbf{A}(y)
\end{equation}
for $y\in\bR^n$ and $z\in \bZ^n$.

For $x\in \bR^n$ and $r>0$, we denote by $B(x,r)$ the open ball with radius $r$ centered at $x$, and write $\Omega(x,r):=\Omega \cap B(x,r)$.
We denote
\[
\omega_{\mathbf A}(r, x):= \fint_{\Omega(x,r)} \,\abs{\mathbf{A}(y)-\bar {\mathbf A}^{\Omega(x,r)}}\,dy, \quad \text{ where } \;\bar{\mathbf A}^{\Omega(x,r)} :=\fint_{\Omega(x,r)} \mathbf{A},
\]
and, for a subset $D$ of $\bR^n$, we write
\[
\omega_{\mathbf A}(r, D):= \sup_{x \in D} \omega_{\mathbf A}(r, x) \quad \text{and}\quad \omega_{\mathbf A}(r)=\omega_{\mathbf A}(r, \bR^n).
\]
We say that $\mathbf{A}$ is of \emph{Dini mean oscillation} in $\bR^n$ and write $\mathbf{A}\in \textsf{DMO}(\bR^n)$ if $\omega_{\mathbf A}(r)$ satisfies the Dini's condition;
\begin{equation*}		
\int_0^1 \frac{\omega_{\mathbf A}(t)}t \,dt <+\infty.
\end{equation*}
 
For a locally integrable function $f$, we denote the modulus of continuity
\begin{equation*}				
\varrho_{f} (r, x)= \sup \set{\abs{  f(x)-f(x_0)}: x_0 \in \Omega, \;\abs{x-x_0} \le r}
\end{equation*}
and, for a subset $D$ of $\bR^n$, we write
\[
\varrho_{f}(r,D):=\sup_{x\in D }\varrho_{f}(x)\quad \text{and}\quad \varrho_{f}(r)= \varrho_{f}(r, \overline{\Omega}).
\]
We shall say that $f$ is (uniformly) Dini continuous in $\Omega$ if its modulus of continuity satisfies the Dini's condition;
\[
 \int_0^1 \frac{\varrho_{f}(t)}t \,dt <+\infty.
\]

Before proceeding further, we now clarify the relationship of the regularity assumption to precisely understand the subsequent steps.
It is obvious that if $f$ is Dini continuous in $\Omega$, then $f$ is of Dini mean oscillation in $\Omega$.
However, a function of Dini mean oscillation is noticeably less restrictive then Dini continuous function; see \cite{DK17} for an example.

A few remarks are in order. Various topics of homogenization theory are covered in many books (see, e.g., \cite{ShenBook,LionBook,DoinaBook} and references therein). A well-known result in the regularity theory of homogenization is the work of Avellaneda and Lin \cite{AL87}. They introduced a compactness method, which originated from the calculus of variation, and proved that if the solution $u_\varepsilon\in W^{1,2}(\Omega;\bR^m)$ of the Dirichlet problem
\[
\cL_\varepsilon u_\varepsilon= F\;\text{ in }\;\Omega,\quad u_\varepsilon=g\;\text{ on }\;\partial \Omega,
\]
with coefficients that are H\"{o}lder continuous, then $u_\varepsilon$ satisfies H\"{o}lder regularity estimates and the following Lipschitz regularity estimates;
\begin{equation*} 
\norm{\nabla u_\varepsilon}_{L^\infty(\Omega)}\leq C\big(\norm{F}_{L^p(\Omega)} +   
\norm{\nabla g}_{C^{1,\mu}(\partial\Omega)}  \big).
\end{equation*}

As a matter of fact, the H\"{o}lder regularity can be established using the real-variable method to obtain $W^{1,p}$ estimates when the coefficients are sufficiently minimal in the vanishing mean oscillation setting; see Lemma \ref{lemmaholder}. However, if the coefficients are uniformly continuous, then it is well-known that the solution $u_\varepsilon$ may fail to achieve Lipschitz regularity. After their well-known results, most Lipschitz regularity results are based on the assumption that the coefficients are H\"{o}lder continuous. To mention just a few, under the same assumption, the Lipschitz regularity for Neumann problem, parabolic and Stokes's system, and considering lower-order terms have been studied; see \cite{KLS13,GS15,GS151,GX17,Xu16,WZ23}.

In another studied case of regularity condition of the given data, Armstrong and Shen \cite{AS16} established Lipschitz regularity in almost-periodic coefficients with suitable coefficients, which can be deduced up to Dini continuous; see \cite[Lemma 4.3]{AS16} with example in \cite{DK17}. Shen \cite{Shen17} also established Lipschitz regularity on the $C^{1,\alpha}$ domain by adding smoothness condition to the coefficients, which can be deduced up to Dini mean oscillation. Recently, results have also been proven where both the coefficients and the boundary data satisfy Dini continuity; see \cite{DLZ21, DL21}.

In this paper, we study the minimal (weakest) regularity conditions for Lipschitz regularity not only on the coefficients but also on the domain, boundary data, and external data. Therefore, we will establish interior Lipschitz estimates of the Dirichlet problem when the coefficients and external data are in Dini mean oscillation, as well as boundary Lipschitz regularity of the Dirichlet problem when the coefficients are in Dini mean oscillation and the other data satisfy Dini continuity.

Before stating our main theorem precisely, we denote function as $C^{1,Dini}$ if it is a $C^1$ function whose first derivatives are Dini continuous.
Now, we state the main theorems. 
 
\begin{theorem} \label{thm00}
Assume the coefficients $\mathbf{A}=(a^{ij})$ of the operator $\cL_\varepsilon$ satisfy the  condition \eqref{ellipticity}, \eqref{perodicity} and are of Dini mean oscillation in $\bR^n$. Let $u_\varepsilon\in W^{1,2}(B(x_0,2R);\bR^m)$ be a weak solution of
\[ 
\cL_\varepsilon u_\varepsilon=\di f \;\text{ in }\;B(x_0,2R)
\]
where $f:\Omega\rightarrow\bR^{n\times m}$ are of Dini mean oscillation and $0<R\leq\frac{1}{2}R_0$. Then, we have
\begin{equation*} 
\norm{\nabla u_\varepsilon}_{L^\infty(B(x_0,R))}\leq C\Bigg\{\frac{1}{R}    \bigg(\fint_{B(x_0,2R)}\abs{  u_\varepsilon}^2 \bigg)^{1/2}  +  \int^R_0\frac{\omega_f(t,B(0,2R))}{t}dt  \Bigg\},
\end{equation*}
where $C=C(n,m,\lambda,\Lambda,\omega_{\mathbf{A}},R_0)$ .
\end{theorem}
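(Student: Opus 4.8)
\medskip
\noindent\textbf{Proof proposal.} The plan is to run a Campanato-type excess-decay iteration in the spirit of Avellaneda--Lin, centered at an arbitrary point $x\in B(x_0,R)$ and split at the homogenization length $\varepsilon$. Write $\cL_0=-\di(\mathbf A^0\nabla)$ for the constant-coefficient homogenized operator, $\chi=(\chi_j^\beta)$ for the periodic correctors, and set
\[
\ell_\varepsilon^{P}(y):=P_j^\beta\big(y_j\,e^\beta+\varepsilon\,\chi_j^\beta(y/\varepsilon)\big),\qquad P\in\bR^{m\times n},
\]
so that $\cL_\varepsilon\ell_\varepsilon^{P}=0$. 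For radii $r\lesssim\varepsilon$ I would use the rescaling trick: on any ball $B(x,s)$ with $s\le\varepsilon$ one has $\fint_{B(x,s)}\abs{\mathbf A(\cdot/\varepsilon)-(\mathbf A(\cdot/\varepsilon))_{B(x,s)}}=\omega_{\mathbf A}(s/\varepsilon,x/\varepsilon)\le\omega_{\mathbf A}(s/\varepsilon)$, so after dilating by $\varepsilon$ the operator $\cL_\varepsilon$ becomes a single elliptic system whose coefficients are of Dini mean oscillation with modulus dominated by $\omega_{\mathbf A}$. The interior gradient estimate for such a system with right-hand side $\di f$, $f$ of Dini mean oscillation — the $C^1$ refinement of Lemma~\ref{lemmaholder}, in which only the \emph{oscillation} of $f$, not its size, enters — then gives, on $B(x,\varepsilon)$ and after subtracting a constant $c_x$,
\[
\norm{\nabla u_\varepsilon}_{L^\infty(B(x,\varepsilon/2))}\le C\Big(\tfrac1\varepsilon\big(\textstyle\fint_{B(x,\varepsilon)}\abs{u_\varepsilon-c_x}^2\big)^{1/2}+\int_0^\varepsilon\tfrac{\omega_f(t,B(x_0,2R))}{t}\,dt\Big).
\]
Thus the whole theorem reduces to bounding $\tfrac1\varepsilon\big(\fint_{B(x,\varepsilon)}\abs{u_\varepsilon-c_x}^2\big)^{1/2}$ by the right-hand side of the claim (if $\varepsilon\ge R$, this last estimate applied directly on $B(x_0,2R)$ already is the theorem, so assume $\varepsilon<R$).

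The large-scale step is an excess-decay lemma. Setting
\[
\Phi(r):=\tfrac1r\inf_{P,\,q}\Big(\textstyle\fint_{B(x,r)}\abs{u_\varepsilon(y)-\ell_\varepsilon^{P}(y)-q}^2\,dy\Big)^{1/2},
\]
I would show there exist $\theta,\eta\in(0,1)$ depending only on $n,m,\lambda,\Lambda$ with $\Phi(\theta r)\le\tfrac14\Phi(r)+C\,\omega_f^{(2)}(r,B(x_0,2R))$ for $\varepsilon/\eta\le r\le R/4$, where $\omega_f^{(2)}$ is the $L^2$ mean oscillation (controlled by $\omega_f$ in the Dini scale via the standard $L^p$ self-improvement of mean oscillations for VMO functions). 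The proof has three moves. \emph{(i)} Freeze $f$ on $B(x,r)$ to its average: since $\di\bar f^{B(x,r)}=0$, the correction solves $\cL_\varepsilon w=\di(f-\bar f^{B(x,r)})$ with zero boundary data, and the energy estimate gives $\tfrac1r(\fint_{B(x,r)}\abs{w}^2)^{1/2}\le C\,\omega_f^{(2)}(r)$. \emph{(ii)} The remaining part is $\cL_\varepsilon$-harmonic on $B(x,r)$; subtracting the near-optimal $\ell_\varepsilon^{P}+q$ for $\Phi(r)$, which is again $\cL_\varepsilon$-harmonic, the quantitative homogenization estimate — valid with only ellipticity, periodicity, boundedness of $\chi$, and interior $C^{1,\alpha}$ bounds for $\cL_0$ — produces an $L^2$ error $\le C(\varepsilon/r)^\sigma\,r\,\Phi(r)$ against a solution $v$ of $\cL_0 v=0$ carrying the same excess. \emph{(iii)} Interior $C^{2}$ Campanato decay for $v$ gives $\inf_{\ell\text{ affine}}\fint_{B(x,\theta r)}\abs{v-\ell}^2\le C\theta^{4}\inf_{\ell\text{ affine}}\fint_{B(x,r/2)}\abs{v-\ell}^2$, and $\norm{\chi}_{L^\infty}\le C$ lets one trade the affine test function $\ell$ for the oscillating one $\ell_\varepsilon^{P}$ at a cost $O(\varepsilon\abs{P})$. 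Collecting the three moves yields $\Phi(\theta r)\le[C_1\theta+C_2(\theta)(\varepsilon/r)^\sigma]\Phi(r)+C(\theta)\omega_f^{(2)}(r)$; choosing first $\theta$ with $C_1\theta\le\tfrac18$ and then $\eta$ with $C_2(\theta)\eta^\sigma\le\tfrac18$ gives the contraction on the range $\varepsilon/r\le\eta$. The hypothesis $\mathbf A\in\textsf{DMO}$ enters decisively here through the boundedness (indeed Lipschitz continuity) of the correctors, which follows from the single-operator Dini theory applied to the cell problem $-\di(\mathbf A\nabla\chi_j^\beta)=\di(\mathbf A\,e_j^\beta)$, whose right-hand side is of Dini mean oscillation precisely because $\mathbf A$ is; without Hölder continuity of $\mathbf A$ this is the only thing available, and it is exactly what is needed to pass between affine and oscillating-affine test functions.

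Iterating the lemma from $r=R/4$ down to the last admissible scale $r_\ast\in[\varepsilon/\eta,\varepsilon/(\eta\theta))$ and summing a geometric series gives $\sup_{r_\ast\le r\le R/4}\Phi(r)\le C\,\Phi(R/4)+C\sum_{k\ge0}\omega_f^{(2)}(\theta^k R/4)\le C\,\mathrm{RHS}$, using $\Phi(R/4)\le\tfrac{C}{R}(\fint_{B(x_0,2R)}\abs{u_\varepsilon}^2)^{1/2}$ (take $P=0$ and $q$ the mean) and $\sum_k\omega_f^{(2)}(\theta^k R/4)\le C\int_0^R\omega_f(t,B(x_0,2R))/t\,dt$; here $\mathrm{RHS}$ denotes the right-hand side of the claimed inequality. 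The same iteration shows the optimal linear parts satisfy $\sum_k\abs{P_{k+1}-P_k}\le C\sum_k\Phi(\theta^k R/4)\le C\,\mathrm{RHS}$, hence $\abs{P_{r_\ast}}\le C\,\mathrm{RHS}$. Since $\varepsilon$ and $r_\ast$ are comparable, evaluating against the pair $(P_{r_\ast},q_{r_\ast})$ and using $\norm{\chi}_{L^\infty}\le C$ yields, with $c_x:=q_{r_\ast}$,
\[
\tfrac1\varepsilon\big(\textstyle\fint_{B(x,\varepsilon)}\abs{u_\varepsilon-c_x}^2\big)^{1/2}\le C\,\Phi(r_\ast)+C\,\abs{P_{r_\ast}}\,\norm{\chi}_{L^\infty}\le C\,\mathrm{RHS},
\]
which is precisely the input required by the small-scale estimate of the first paragraph; since the Dini theory provides a continuous representative of $\nabla u_\varepsilon$ on $B(x,\varepsilon/2)$, we get $\abs{\nabla u_\varepsilon(x)}\le C\,\mathrm{RHS}$, and as $x\in B(x_0,R)$ was arbitrary and $\mathrm{RHS}$ is independent of $x$, the theorem follows.

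The main obstacle is the excess-decay lemma in the merely Dini-mean-oscillation regime, with two delicate points: \emph{(a)} producing bounded — in fact Lipschitz — correctors without any Hölder continuity of $\mathbf A$, which is where $\mathbf A\in\textsf{DMO}$ is genuinely used, via the $C^1$ estimate applied to the cell problem; and \emph{(b)} controlling the accumulation of the two ``$\varepsilon$-scale'' error terms — the homogenization error $(\varepsilon/r)^\sigma$ and the affine-versus-oscillating-affine discrepancy $O(\varepsilon\abs{P})$ — so that neither pollutes the geometric decay before the iteration reaches the scale $\varepsilon$; this forces the $L^1\!\to\!L^p$ upgrade of the mean oscillations and a careful choice, at each scale, of which test function to optimize against. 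The quantitative homogenization estimate, the single-operator Dini theory (Lemma~\ref{lemmaholder} and its $C^1$ refinement), Caccioppoli's inequality, and the passage from affine-excess decay to a pointwise gradient bound are all by now standard. (Alternatively, one could replace move (ii) by the original Avellaneda--Lin compactness argument, trading the quantitative homogenization estimate for a compactness plus Liouville dichotomy; the bookkeeping in (b) is unavoidable either way.)
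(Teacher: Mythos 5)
Your proposal is correct in outline, but it proves the theorem by a genuinely different mechanism than the paper for the large scales. The paper runs the classical Avellaneda--Lin compactness method: a one-step improvement lemma proved by contradiction and compactness (no convergence rate is ever needed -- only weak convergence of $u_{\varepsilon_l}$ and of $\hat{\mathbf{A}}_l$, plus interior $C^2$ estimates for the limiting constant-coefficient system), followed by an induction in which the decay at scale $\theta^l$ is measured directly by the Dini sums $\int_0^{\theta^l}\big(\omega_{\mathbf{A}}(t)+\omega_f(t)\big)t^{-1}dt$, and finally a blow-up to scale $\varepsilon$ where the single-operator Dini theory of Dong--Kim is applied to $\cL_1$. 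You replace the compactness step by a quantitative excess-decay (Campanato) iteration: freeze $f$, approximate the $\cL_\varepsilon$-harmonic part by an $\cL_0$-harmonic function with an $(\varepsilon/r)^\sigma$ error, use $C^2$ decay for $\cL_0$, and trade affine for oscillating-affine competitors using $\norm{\chi}_{L^\infty}\le C$; the two routes then merge, since your competitors $\ell^P_\varepsilon$ are exactly the paper's $\big(I^\beta_j+\varepsilon\chi^\beta_j(\cdot/\varepsilon)\big)\mathbf{B}^\beta_j$, the boundedness (indeed Lipschitz continuity) of $\chi$ is obtained in both arguments from the DMO $C^1$ estimate applied to the cell problem, and your small-scale step is precisely the paper's blow-up step. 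What each approach buys: your quantitative route gives explicit rates and constants, avoids the contradiction argument, and is more portable (almost-periodic or stochastic settings in the spirit of Armstrong--Shen and Shen), at the price of needing the approximation-by-homogenization lemma, the $L^1\!\to\!L^2$ self-improvement of mean oscillation, and the bookkeeping of the two $\varepsilon$-scale errors that you correctly single out; the paper's compactness route needs no rate at all and feeds the Dini moduli of $\mathbf{A}$ and $f$ directly into the decay, at the price of non-explicit $\varepsilon_0$, $\theta$. Two small points to tidy in a write-up: the constant you subtract at scale $\varepsilon$ should be $q_{r_*}$ plus the value of the affine part at the center $x$ (otherwise the term $\varepsilon^{-1}\big(\fint_{B(x,\varepsilon)}\abs{\ell^{P}_\varepsilon}^2\big)^{1/2}$ is not controlled), and the slope-recovery inequality $\abs{Q}\le C\rho^{-1}\big(\fint_{B(x,\rho)}\abs{\ell^{Q}_\varepsilon+c}^2\big)^{1/2}$ requires $\rho\ge M\varepsilon$ for a fixed large $M$, which your choice of $\eta$ after $\theta$ accommodates but should be stated.
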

\begin{theorem}					\label{thm01}
Let $\Omega$ be a bounded $C^{1,Dini}$ domain in $\bR^n$ .
Assume the coefficients $\mathbf{A}=(a^{ij})$ of the operator $\cL_\varepsilon$ satisfy the  condition \eqref{ellipticity}, \eqref{perodicity} and are of Dini mean oscillation in $\bR^n$. Let $u_\varepsilon\in W^{1,2}(\Omega;\bR^m)$ be a weak solution of
\begin{equation} \label{main_eq}
\cL_\varepsilon u_\varepsilon=\di f +F\;\text{ in }\;\Omega,\quad u_\varepsilon=g\;\text{ on }\;\partial \Omega,
\end{equation} 
where $f:\Omega\rightarrow\bR^{n\times m}$ are of Dini continuous and $F\in L^p(\Omega;\bR^m)$ for $p>n$ and  $g\in C^{1,Dini}(\overline{ \Omega};\bR^m)$. Then, we have
\begin{equation*} 
\norm{\nabla u_\varepsilon}_{L^\infty(\Omega)}\leq C\big(\norm{F}_{L^p(\Omega)} +  
\norm{\nabla g}_{L^\infty(\partial\Omega)}+\int^1_0\frac{\varrho_f(t)+\varrho_{\nabla  g}(t)}{t}dt  \big),
\end{equation*}
where $C=C(n,m,\lambda,\Lambda,p,\Omega,\omega_{\mathbf{A}})$.
\end{theorem}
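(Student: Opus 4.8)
The plan is to deduce Theorem \ref{thm01} from the interior estimate of Theorem \ref{thm00} by a finite covering of $\overline\Omega$ together with a boundary excess-decay iteration, organized so that the Dini mean oscillation of $\mathbf A$ and the Dini continuity of $f$ and $\nabla g$ enter only through convergent integrals. Cover $\overline\Omega$ by balls of radius $cr_0$; on a ball with $B(x_0,2cr_0)\subset\Omega$ the bound on $\nabla u_\varepsilon$ is precisely Theorem \ref{thm00} (the $L^2$-norm of $u_\varepsilon$ on the concentric ball being controlled by the energy inequality and the data), so it remains to bound $\nabla u_\varepsilon(x_0)$ for $x_0\in\partial\Omega$. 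After a rigid motion take $x_0=0$ and $\partial\Omega\cap B(0,r_0)=\{x_n=\psi(x')\}$ with $\psi(0)=0$, $\nabla\psi(0)=0$, and $\nabla\psi$ Dini continuous. I will \emph{not} flatten the boundary, since the associated change of variables would destroy the periodicity \eqref{perodicity} of $\mathbf A(\cdot/\varepsilon)$; all comparisons are made directly in $\Omega(0,r)$. For $0<r\le r_0$ set, with $\ell(x):=g(0)+\nabla_{x'}g(0)\cdot x'$,
\[
\Phi(r):=\inf_{\mathbf q\in\bR^{m}}\ \frac1r\Bigl(\fint_{\Omega(0,r)}\bigl|u_\varepsilon-\ell-\mathbf q\,x_n\bigr|^{2}\Bigr)^{1/2},
\qquad
\Psi(r):=\Phi(r)+\int_0^r\frac{\varrho_f(t)+\varrho_{\nabla g}(t)}{t}\,dt+r^{1-n/p}\norm{F}_{L^p(\Omega)},
\]
the affine profiles $\ell+\mathbf q\,x_n$ being those compatible to first order with the boundary condition in \eqref{main_eq}.

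The first core step is the large-scale regime $C_0\varepsilon\le\theta r\le r\le r_0$: compare $u_\varepsilon$ on $\Omega(0,r)$ with the solution $w$ of the homogenized Dirichlet problem $\cL_0 w=\di\,\overline{f}^{\,\Omega(0,r)}+F$ in $\Omega(0,r)$ with $w=u_\varepsilon$ on $\partial\Omega(0,r)$. Using the two-scale expansion $u_\varepsilon\approx w+\varepsilon\chi(\cdot/\varepsilon)\nabla w$ together with the divergence-free (flux) correctors and a cutoff of the corrector term in an $O(\varepsilon)$-neighbourhood of $\partial\Omega$, the comparison error in the relevant $L^{2}$-average is bounded by $C(\varepsilon/r)^{c}$ times the natural norms of $w$, $f$ and $F$. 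Since $\cL_0$ has constant coefficients and $\Omega$ is $C^{1,Dini}$ with $C^{1,Dini}$ boundary datum $g$, $w$ admits a boundary $C^{1,\mu}$ (indeed $C^{1,Dini}$) expansion about $0$; transferring this back, and noting that the corrector term costs at most $C\varepsilon\norm{\nabla w}_{L^\infty}\le C\theta r\,\norm{\nabla w}_{L^\infty}$ because $\varepsilon\le\theta r$, one obtains, after first fixing $\theta\in(0,1/4)$ small and then $C_0$ large,
\[
\Phi(\theta r)\le\tfrac12\,\Phi(r)+C\,\theta\,r^{1-n/p}\norm{F}_{L^p(\Omega)}+C\int_0^{r}\frac{\varrho_f(t)+\varrho_{\nabla g}(t)}{t}\,dt.
\]

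The second core step is the small-scale regime $0<\rho\le r\le\min(r_0,C_0\varepsilon)$: rescale by $\varepsilon$, so that $\tilde u(y):=\varepsilon^{-1}\bigl(u_\varepsilon(\varepsilon y)-\ell(\varepsilon y)\bigr)$ solves a divergence-form system with the \emph{fixed} $1$-periodic coefficients $\mathbf A(y)$ — which are of Dini mean oscillation by hypothesis — on $\varepsilon^{-1}\Omega$, whose boundary near $0$ is $C^{1,Dini}$ with modulus dominated by that of $\Omega$, with right-hand side $\di\bigl(f(\varepsilon\,\cdot)+\mathbf A\,\nabla\ell\bigr)+\varepsilon F(\varepsilon\,\cdot)$ and boundary datum $y\mapsto\varepsilon^{-1}\bigl(g(\varepsilon y)-\ell(\varepsilon y)\bigr)$, whose gradient is Dini continuous with modulus $\varrho_{\nabla g}(\varepsilon\,|y|)$. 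Since the operator is now $\varepsilon$-independent, reducing to zero boundary data costs nothing, and the boundary $C^{1}$-estimate for systems with Dini mean oscillation coefficients in $C^{1,Dini}$ domains (the boundary analogue of the Dong--Kim theory, cf.\ \cite{DK17}) yields, after undoing the scaling, $\Phi(\rho)\le C\,\Phi(r)+C\int_0^{r}t^{-1}\bigl(\omega_{\mathbf A}(t)+\varrho_f(t)+\varrho_{\nabla g}(t)\bigr)\,dt+C\,r^{1-n/p}\norm{F}_{L^p(\Omega)}$. Iterating the large-scale inequality geometrically from $r=r_0$ down to $r\simeq\varepsilon$ and then applying the small-scale inequality below $\varepsilon$ gives $\sup_{0<r\le r_0}\Psi(r)\le C\,\Psi(r_0)$; the series generated by the data terms converge precisely because $f$ and $\nabla g$ are Dini continuous and because $p>n$ makes $\sum_k(\theta^k r_0)^{1-n/p}$ a convergent geometric series, while $\Psi(r_0)$ is dominated by $\norm{F}_{L^p(\Omega)}+\norm{\nabla g}_{L^\infty(\partial\Omega)}+\int_0^1 t^{-1}\bigl(\varrho_f(t)+\varrho_{\nabla g}(t)\bigr)\,dt$ once $\norm{u_\varepsilon}_{L^2(\Omega(0,r_0))}$ is estimated by the energy inequality. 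Finiteness of $\sup_r\Phi(r)$ forces the optimal profiles $\mathbf q_{\theta^k r_0}$ to form a Cauchy family as $k\to\infty$ with $|\mathbf q_{r_0}|+\sum_k|\mathbf q_{\theta^{k+1}r_0}-\mathbf q_{\theta^k r_0}|$ bounded by the same right-hand side; since $u_\varepsilon-\ell-\mathbf q\,x_n$ vanishes to first order on the boundary graph, its limit together with $\nabla_{x'}g(0)$ recovers $\nabla u_\varepsilon(0)$ with the asserted bound. As $0\in\partial\Omega$ was arbitrary, and interior balls are handled by Theorem \ref{thm00} (Lemma \ref{lemmaholder} being used to pass from $L^{2}$-averages to pointwise gradient values), Theorem \ref{thm01} follows.

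The step I expect to be the main obstacle is the large-scale one on the curved $C^{1,Dini}$ domain: since flattening $\partial\Omega$ is unavailable, the two-scale comparison with the homogenized problem and, above all, the treatment of the corrector's boundary layer must be carried out directly in $\Omega(0,r)$. One must verify that the comparison error is $O((\varepsilon/r)^{c})$ with a constant independent of $\varepsilon$ and, crucially, that the loss produced by cutting off $\varepsilon\chi(\cdot/\varepsilon)\nabla w$ near $\partial\Omega$ is still of the form $C\theta r\,(\text{data})$, so that it is absorbed into the $\tfrac12\Phi(r)$ term. By contrast, the summation of the Dini series and the passage from excess decay to a pointwise gradient bound are routine.
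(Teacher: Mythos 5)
Your route (a quantitative excess-decay scheme in the spirit of Armstrong--Shen/Shen, with a large-scale comparison to the homogenized problem and a sub-$\varepsilon$ reduction to the fixed-coefficient DMO theory) is genuinely different from the paper's, which runs the Avellaneda--Lin compactness method with Dirichlet correctors $\Phi_{\varepsilon,j}$ and then handles $f$ and $F$ through gradient estimates for the Green's matrix. However, your central large-scale inequality is false as stated. Because you measure the excess $\Phi(r)$ against \emph{affine} profiles $\ell+\mathbf q\,x_n$ only, the $\varepsilon$-periodic oscillation of $u_\varepsilon$ puts a floor of order $\varepsilon/r$ under $\Phi(r)$ that no choice of $\theta$, $C_0$ can remove. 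Concretely, take $f=0$, $F=0$, $g=x_n$ and $u_\varepsilon=\Phi_{\varepsilon,n}$ the Dirichlet corrector: all your data terms vanish, while $u_\varepsilon - x_n \approx \varepsilon\chi_n(x/\varepsilon)$ plus a boundary layer, so $\inf_{\mathbf q}\frac1r\|u_\varepsilon-\ell-\mathbf q x_n\|_{L^2\text{-avg}}\asymp \varepsilon/r$ for $\varepsilon\lesssim r\le r_0$; then $\Phi(\theta r)\approx\theta^{-1}\Phi(r)$, contradicting $\Phi(\theta r)\le\frac12\Phi(r)$ throughout the large-scale regime. The two-scale comparison cannot deliver a clean halving plus pure data terms: the correct inequality necessarily carries an extra term of size $C(\varepsilon/r)^{c}$ multiplying solution-dependent quantities ($\Phi(r)$, $|\mathbf q_r|$, $\|\nabla g\|_{L^\infty}$), coming both from the approximation error and from the corrector/boundary-layer part of $u_\varepsilon$, and this term is not absorbable by "choosing $C_0$ large" since it is not dominated by the excess.

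The scheme is repairable, but the repair is exactly the missing idea: either keep the $(\varepsilon/r)^{c}$-weighted terms and close the iteration with Shen's general lemma for such sequences (using that $\sum_k(\varepsilon/\theta^k r_0)^{c}$ converges and that $|\mathbf q_{\theta r}-\mathbf q_r|\le C\Phi(r)+\dots$), or replace the affine comparison family by corrector-adapted profiles -- which is precisely what the paper does by iterating against $\Phi_{\varepsilon,j}\mathbf B_j^\beta$ (Lemmas \ref{lemma04}--\ref{lemma05}) so that the oscillating part is built into the profile and a genuine geometric improvement survives. Relatedly, your final step "the limit of $\mathbf q_{\theta^k r_0}$ together with $\nabla_{x'}g(0)$ recovers $\nabla u_\varepsilon(0)$" is not right in the homogenization setting: $\nabla u_\varepsilon$ near the boundary differs from any constant slope by the gradient of the corrector, which is bounded but does not vanish; the pointwise bound must instead be extracted, as in Proposition \ref{prop02}, by combining the excess bound at scale $\sim d_x$ with the interior estimate of Theorem \ref{thm00} (for $d_x\gtrsim\varepsilon$) and with the rescaled DMO boundary $C^1$ estimate (for $d_x\lesssim\varepsilon$); Lemma \ref{lemmaholder} gives only H\"older information and cannot convert $L^2$ averages into gradient values. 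Your treatment of $f$ and $F$ inside the scheme (rather than via the Green's function representation, as in Section \ref{section5}) is a legitimate alternative once the decay inequality is fixed.
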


We provide a brief description of the proof used in the theorem and remark on its novelties.
To show Theorem \ref{thm00} and Theorem \ref{thm01}, we adapt the compactness method, which involves three successive steps. This method requires more delicate iterative estimates, some estimates considering boundary correctors, and controlling the given data relative to each other. It is not easier than previous results \cite{AS162,AS16,Shen17}. However, it is useful for studying the minimal condition for Lipschitz regularity and distinctly understanding the classification of each regularity condition. Through this approach, we extend all regularity conditions of the given data to integral conditions (e.g. Dini's condition) for minimization purposes, rather than pointwise conditions  (e.g. H\"{o}lder's continuous).  In particular, with Dini mean oscillation coefficients $\mathbf{A}$, we derive new results: when $f$  is Dini mean oscillation in interior regularity and $\Omega$ is a $C^{1,Dini}$ domain in boundary regularity. Our results correspond exactly to the latest general regularity theory for $\cL_1$; see, \cite{DK17, DEK18,DLK20}.

 Finally, the organization of the paper is as follows: In Section \ref{section2}, we state some preliminary lemmas. The proofs of Theorems \ref{thm00}, boundary Lipschitz estimates and Theorem \ref{thm01} are given in Sections \ref{section3}, \ref{section4}, and \ref{section5}.
\section{Preliminaries} \label{section2}
In this section, we present some results which will be used in the proofs of Theorem \ref{thm00} and Theorem \ref{thm01}.

From well known results; see \cite{LionBook,DoinaBook,ShenBook}, we consider the homogenized operator $\cL_{\hat{\mathbf{A}},0}$ that if $F\in L^2(\Omega;\bR^m)$, $f\in L^2(\Omega;\bR^{n\times m})$ and $g\in H^{1/2}(\partial\Omega;\bR^m)$ then, as $\varepsilon \rightarrow 0$, $u_\varepsilon$ converges weakly in $H^1(\Omega;\bR^m)$ to $u_0$ that solves the \emph{homogenized (effective) problem};
\[
\cL_{\hat{\mathbf{A}},0}u_0^\alpha: =-D_i(\hat{\mathbf{A}}_{ij}^{\alpha\beta}D_ju_0^\beta)=D_i f^\alpha_i +F^\alpha \;\text{ in }\;\Omega,\quad u_0^\alpha=g^\alpha\;\text{ on }\;\partial \Omega,
\]
where constants matrix $\hat{\mathbf{A}}=(\hat{a}^{\alpha\beta}_{ij})$ with $1\leq i,j\leq n $, $1\leq \alpha,\beta\leq m$,
\[
\hat{a}^{\alpha\beta}_{ij}=\fint_{[0,1)^n}  a^{\alpha\beta}_{ij}+a^{\alpha\gamma}_{ik}D_{y_k}\chi_j^{\gamma\beta} dy
\]
and the  $1$-periodic matrix $\chi=(\chi^{\beta}_i)=(\chi^{\alpha\beta}_i)$, with $1\leq i\leq n$, $1\leq \alpha,\beta\leq m$, the \emph{matrix of (first-order) correctors} is defined as the solution of the \emph{cell problem};
\begin{equation}		\label{corrector}
-D_i(\mathbf{A}_{ik}^{\alpha\gamma}(y)D_k \chi_j^{\gamma\beta}(y))=D_i\mathbf{A}_{ij}^{\alpha\beta}(y)      \; \text{in}\;\bR^n\; \quad \text{and} \quad \int_{[0,1)^n}\chi^{\beta}_j dy=0.
\end{equation}
To simplify the notation, we re-write the above equation as 
\[
\cL_1(\chi^\beta_j+I^\beta_j)=0\;\text{in}\;\bR^n,
\]
where $I^\beta_j=I^\beta_j(y)=y_je^\beta$ is defined as $e^{\beta}=(0,\ldots,1,\ldots,0)$ with 1 in the $\beta$th position. \

We recall that $\mathbf{A}$ belong to $\textsf{VMO}_{\mathbf{A}}$ if and only if $\lim_{r\rightarrow 0} \omega_{\mathbf{A}}(r)=0$ (see, e.g., \cite{Krylov}) and thus $\textsf{VMO}_{\mathbf{A}}$ contains $\textsf{DMO}_{\mathbf{A}}$. We state for precisely that the following results are the uniform H\"{o}lder estimates for $\cL_\varepsilon$, which have been studied by many authors, with some work being \cite{AL87,ShenBook,WZ23,Xu16}.

\begin{lemma} \label{lemmaholder}
Let $\Omega$ be a bounded $C^{1}$ domain in $\bR^n$ .
Assume the coefficients $\mathbf{A}=(a^{ij})$ of the operator $\cL_\varepsilon$ satisfy the  condition \eqref{ellipticity}, \eqref{perodicity} and are of vanishing mean oscillation in $\bR^n$ ($\textsf{VMO}_{\mathbf{A}}$). Let $u_\varepsilon\in W^{1,2}(\Omega;\bR^m)$ be a weak solution of
\[ 
\cL_\varepsilon u_\varepsilon= \di f \;\text{ in }\;\Omega,\quad u_\varepsilon=g\;\text{ on }\;\partial \Omega,
\] 
where $f\in L^p(\Omega;\bR^{n\times m})$ for $p>n$ and    $g\in C^{0,\mu}(\partial \Omega;\bR^m)$. Then, we have
\[   
\norm{ u_\varepsilon}_{C^\mu(\Omega)}\leq C\big(\norm{f}_{L^p(\Omega)}+\norm{g}_{C^{0,\mu}(\partial\Omega)}  \big),
\]
where $C=C(n,m,\lambda,\Lambda,p,\Omega,\textsf{VMO}_{\mathbf{A}})$ and $\mu=1-n/p$.
\end{lemma}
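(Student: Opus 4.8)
The plan is to deduce Lemma~\ref{lemmaholder} from a suitable interior/boundary $W^{1,p}$ estimate for $\cL_\varepsilon$ together with Morrey's embedding, exploiting the fact that the constants in the $W^{1,p}$ theory for divergence-form systems with $\textsf{VMO}$ coefficients are \emph{independent of $\varepsilon$} because the modulus $\omega_{\mathbf A}(r/\varepsilon)$ and $\omega_{\mathbf A}(r)$ share the same $\textsf{VMO}$ modulus by periodicity — more precisely, for $r\ge\varepsilon$ periodicity forces $\omega_{\mathbf A}(r/\varepsilon,\,\cdot/\varepsilon)$ to be small, and for $r\le\varepsilon$ one rescales. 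Concretely, I would first record the scaled equation: if $u_\varepsilon$ solves $\cL_\varepsilon u_\varepsilon=\di f$ on $\Omega$, then $v(y):=u_\varepsilon(\varepsilon y)$ solves $\cL_1 v=\di\tilde f$ with $\tilde f(y)=f(\varepsilon y)$ on $\varepsilon^{-1}\Omega$, so all local estimates for $\cL_1$ transfer to $\cL_\varepsilon$ after undoing the dilation, with the $L^p\!\to\!L^\infty$-type bounds scaling correctly since $\mu=1-n/p$ is exactly the Morrey exponent.

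Next I would split into the interior and boundary cases. For the interior estimate, fix $B(x,r)\subset\subset\Omega$; by the classical Di~Fazio / Byun--Wang type $W^{1,p}$ estimate for divergence systems with $\textsf{VMO}$ coefficients (applied to $\cL_1$ on a rescaled ball and then scaled back, or directly to $\cL_\varepsilon$ with the observation above), one gets
\[
\Big(\fint_{B(x,r/2)}\abs{\nabla u_\varepsilon}^p\Big)^{1/p}\le C\Big\{\frac1r\Big(\fint_{B(x,r)}\abs{u_\varepsilon}^2\Big)^{1/2}+\Big(\fint_{B(x,r)}\abs{f}^p\Big)^{1/p}\Big\},
\]
and then Morrey's inequality $\norm{u_\varepsilon}_{C^{0,\mu}(B(x,r/2))}\le C r^{1-n/p}\norm{\nabla u_\varepsilon}_{L^p(B(x,r/2))}+(\text{average term})$ gives the interior Hölder bound. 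For the boundary estimate, since $\Omega$ is $C^1$ one flattens the boundary and uses the global $W^{1,p}$ estimate for the Dirichlet problem with $\textsf{VMO}$ coefficients on $C^1$ domains (again $\varepsilon$-uniform by the same periodicity/rescaling argument), combined with the fact that $g\in C^{0,\mu}(\partial\Omega)$ can be extended to a $C^{0,\mu}(\overline\Omega)$ function and subtracted off; the $C^{0,\mu}$ bound up to the boundary then follows from Morrey's embedding on half-balls. Patching the interior and boundary estimates via a standard covering/interpolation argument yields $\norm{u_\varepsilon}_{C^\mu(\Omega)}\le C(\norm{f}_{L^p(\Omega)}+\norm{g}_{C^{0,\mu}(\partial\Omega)})$, and absorbing the $\norm{u_\varepsilon}_{L^2}$ term by the maximum principle / energy estimate $\norm{u_\varepsilon}_{L^2(\Omega)}\le C(\norm{f}_{L^p(\Omega)}+\norm{g}_{C^{0,\mu}(\partial\Omega)})$ closes the loop.

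The main obstacle is making the $\varepsilon$-uniformity of the $W^{1,p}$ constants fully rigorous across the transition scale $r\sim\varepsilon$: for balls much larger than $\varepsilon$ one needs that the rescaled coefficients $\mathbf A(\cdot/\varepsilon)$ have a $\textsf{VMO}$ modulus dominated uniformly in $\varepsilon$ (which is true because averaging over a ball of radius $\rho\ge\varepsilon$ only sees the periodic structure and $\omega_{\mathbf A}(\rho/\varepsilon)\le\omega_{\mathbf A}(1)$ is harmless, while for $\rho<\varepsilon$ one has $\omega_{\mathbf A}(\cdot/\varepsilon)(\rho)=\omega_{\mathbf A}(\rho/\varepsilon)$ which is small when $\rho/\varepsilon$ is small), and this requires a short but careful lemma. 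An alternative that sidesteps this is to quote the uniform Hölder estimate for $\cL_\varepsilon$ directly from \cite{AL87} (in the $\textsf{VMO}$ generalization of \cite{ShenBook,Xu16,WZ23}), in which case the proof reduces to citing the reference and noting the $L^p$-to-$C^{0,\mu}$ matching of exponents; I would present the argument in this streamlined form, deferring the $\varepsilon$-uniform $W^{1,p}$ bookkeeping to the cited literature and only sketching the rescaling identity $v(y)=u_\varepsilon(\varepsilon y)$ as the bridge.
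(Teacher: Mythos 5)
The paper offers no proof of Lemma \ref{lemmaholder} at all: it records the statement as a known uniform estimate and points to \cite{AL87,ShenBook,Xu16,WZ23}. Your streamlined fallback (quote the uniform H\"{o}lder/$W^{1,p}$ theory for $\cL_\varepsilon$ from those references) therefore coincides with the paper's treatment, and in that form there is nothing to object to.

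Your self-contained sketch, however, has a genuine gap exactly where the $\varepsilon$-uniformity must be earned. You assert that the VMO modulus of $\mathbf{A}(\cdot/\varepsilon)$ is uniformly controlled because ``for $r\ge\varepsilon$ periodicity forces the oscillation to be small'' and because $\omega_{\mathbf A}(\rho/\varepsilon)\le\omega_{\mathbf A}(1)$ is ``harmless''. Both claims fail. On a ball of radius $\rho\gtrsim\varepsilon$ the mean oscillation of $\mathbf{A}(\cdot/\varepsilon)$ is comparable to the oscillation of $\mathbf{A}$ over a full period cell, a fixed positive constant (of size up to $2\Lambda$) unless $\mathbf{A}$ is constant; it does not become small as $\rho\to0$ uniformly in $\varepsilon$, since the relevant argument is $\rho/\varepsilon\to\infty$. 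The Di Fazio/Byun--Wang $W^{1,p}$ theory needs \emph{smallness} of the mean oscillation at small scales, not mere boundedness --- with only bounded measurable coefficients one gets Meyers' $W^{1,2+\delta}$ and, for systems, not even H\"{o}lder continuity --- so applying that theory directly to $\cL_\varepsilon$ yields constants that degenerate as $\varepsilon\to0$. The rescaling $v(y)=u_\varepsilon(\varepsilon y)$ only covers balls of radius $\lesssim\varepsilon$ and cannot bridge the mesoscopic range $\varepsilon\lesssim\rho\lesssim 1$; uniformity there is precisely where homogenization input is required, namely approximating $u_\varepsilon$ at scales above $\varepsilon$ by solutions of the constant-coefficient homogenized operator (the real-variable argument in \cite{ShenBook}, or the compactness method of \cite{AL87}). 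This is why the uniform $W^{1,p}$/H\"{o}lder estimate is a theorem of homogenization theory rather than a corollary of VMO $W^{1,p}$ theory. So either keep the citation route, or replace your deferred ``short but careful lemma'' by the genuine approximation argument; as stated it cannot be proved.
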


Notice that if $\mathbf{A}$ is of Dini mean oscillation, then there is a modification   $\bar{\mathbf A}$ of $\mathbf A$ that is uniformly continuous with its modulus of continuity controlled by $\omega_\mathbf{A}$; see \cite[Appendix]{HK20} for the proof as well as \cite[Lemma 2.7.]{DK17}. Therefore, without loss of generality, we shall assume the following lemma.

\begin{lemma} \label{lemmaC1}
Let $\Omega$ be a domain. Suppose that $f\in L^1_{loc}(\overline{\Omega})$ is of Dini mean oscillation in $\Omega$, Then for $0<\kappa<1$, we have
\[
\varrho_f(r)+\sum^\infty_{i=0}\omega_f(\kappa^ir)\leq C_1 \int^r_0\frac{\omega_f(t)}{t}dt.
\]
In particular, if $f$ is Dini continuous, Then for $0<\kappa<1$, we also have
\[
\sum^\infty_{i=0}\varrho_f(\kappa^ir)\leq C_1 \int^r_0\frac{\varrho_f(t)}{t}dt.
\]
The constants $C_1$ depend at most on $n$ and $\Omega$.
\end{lemma}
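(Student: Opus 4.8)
The plan is to reduce both quantities on the left-hand side to a single mechanism: a \emph{reverse-doubling} bound for $\omega_f$, together with the standard telescoping of averages over shrinking balls. The one structural fact I need about $\Omega$ is the measure-density estimate $c_0\rho^n\le\abs{\Omega(x,\rho)}\le C_0\rho^n$, valid for $x\in\overline\Omega$ and $0<\rho\le\diam\Omega$ with $c_0=c_0(n,\Omega)$ and $C_0=C_0(n)$; the upper bound is trivial, while the lower bound is where the regularity of $\Omega$ enters (it is automatic in the interior, where $\Omega(x,\rho)=B(x,\rho)$, and it holds up to the boundary for the $C^{1,\mathrm{Dini}}$, indeed already $C^{1}$, domains considered elsewhere in the paper). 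By the modification recalled just before the statement (see \cite{DK17,HK20}) I may also assume that $f$ has a continuous representative, still written $f$, with $f(x)=\lim_{\rho\to0}\bar f^{\Omega(x,\rho)}$ at every point; this is what makes $\varrho_f$ meaningful.

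First I would establish the reverse-doubling inequality: for $0<s\le r$,
\[
\omega_f(s)\le C\bigl(r/s\bigr)^{n}\,\omega_f(r),\qquad C=C(n,\Omega).
\]
Fixing $x$, replacing the constant $\bar f^{\Omega(x,s)}$ by the admissible value $\bar f^{\Omega(x,r)}$ costs at most a factor $2$, after which enlarging the domain of integration from $\Omega(x,s)$ to $\Omega(x,r)$ and invoking the measure-density bounds gives $\omega_f(s,x)\le 2\,\abs{\Omega(x,r)}\,\abs{\Omega(x,s)}^{-1}\,\omega_f(r,x)\le C(r/s)^n\omega_f(r,x)$; taking the supremum over $x$ yields the claim. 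Applying it with $s=\kappa^{i+1}r$ and any $t\in[\kappa^{i+1}r,\kappa^{i}r]$ gives $\omega_f(\kappa^{i+1}r)\le C\kappa^{-n}\omega_f(t)$; multiplying by $t^{-1}$, integrating over that interval, and summing over $i\ge0$ produces $\sum_{i\ge1}\omega_f(\kappa^{i}r)\le C_1\int_0^r\omega_f(t)\,t^{-1}\,dt$, while the remaining top-scale term $\omega_f(r)$ is absorbed by the usual device of allowing a fixed dilation of the integration interval (harmless for the constant $C_1$).

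For the term $\varrho_f(r)$ I would run the Campanato telescoping. For $k\ge0$, the same two-ball comparison gives $\bigabs{\bar f^{\Omega(x,2^{-k}r)}-\bar f^{\Omega(x,2^{-k-1}r)}}\le C\,\omega_f(2^{-k}r,x)$, and summing the telescope $\bar f^{\Omega(x,r)}-f(x)=\sum_{k\ge0}\bigl(\bar f^{\Omega(x,2^{-k}r)}-\bar f^{\Omega(x,2^{-k-1}r)}\bigr)$ yields $\bigabs{f(x)-\bar f^{\Omega(x,r)}}\le C\sum_{k\ge0}\omega_f(2^{-k}r)$. Comparing $f(x)$ and $f(x_0)$ for $\abs{x-x_0}\le r$ through the common ball $\Omega(x,2r)\supseteq\Omega(x_0,r)$ then gives $\varrho_f(r)\le C\sum_{k\ge-1}\omega_f(2^{-k}r)$, and feeding this into the dyadic-versus-integral comparison of the previous paragraph (again up to a fixed dilation of the radius) bounds it by $C_1\int_0^r\omega_f(t)\,t^{-1}\,dt$; combining the two estimates proves the first displayed inequality. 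The ``in particular'' clause is easier: $\varrho_f$ is non-decreasing in $r$, so $\varrho_f(\kappa^{i+1}r)\le\varrho_f(t)$ for every $t\in[\kappa^{i+1}r,\kappa^{i}r]$, and the integrate-then-sum argument applies verbatim with $\varrho_f$ in place of $\omega_f$, with no reverse-doubling needed.

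I expect the only genuinely delicate point to be the measure-density lower bound near $\partial\Omega$: that is precisely where the domain regularity is used, and it pins down the dependence of $C_1$ on $\Omega$. Everything else---the two-ball average comparison, the geometric summation against the logarithmic integral, and the telescoping---is routine; the only bookkeeping is the fixed-factor enlargement of $(0,r)$ needed to capture the borderline scales $\omega_f(r)$ and $\omega_f(2r)$, which is absorbed into the constant.
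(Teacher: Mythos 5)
The paper offers no proof of this lemma (it is imported from \cite{DK17,HK20,DLK20}), so your argument has to stand on its own. Your skeleton is the standard one — measure density of $\Omega(x,\rho)$, the two-ball comparison $\omega_f(s)\le C(r/s)^n\omega_f(r)$ for $s\le r$, Campanato telescoping for $\varrho_f$, and the dyadic-versus-integral comparison — and those steps are fine. The genuine gap is precisely the point you wave off as bookkeeping: absorbing the top scales. Your reverse-doubling inequality bounds \emph{small}-scale oscillation by \emph{large}-scale oscillation, so it handles $\sum_{i\ge1}\omega_f(\kappa^i r)$ but says nothing about the $i=0$ term $\omega_f(r)$, nor about the terms $\omega_f(r)$ and $\omega_f(2r)$ that your own telescoping bound $\varrho_f(r)\le C\sum_{k\ge-1}\omega_f(2^{-k}r)$ produces. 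The claim that these are ``absorbed by a fixed dilation of the integration interval, harmless for the constant'' is exactly what is not automatic: one needs $\int_r^{2r}\omega_f(t)\,t^{-1}dt\le C\int_0^r\omega_f(t)\,t^{-1}dt$, i.e.\ an upper bound on $\omega_f$ at scales $\ge r$ by its behaviour at scales $<r$, which is the opposite direction to your volume-comparison inequality; formally, reverse doubling alone is consistent with $\omega_f(t)=0$ for all $t<r$ while $\omega_f(r)>0$. The needed estimate is true but has real content (for $f$ linear, $\omega_f(r)$ and $\int_0^r\omega_f(t)\,t^{-1}dt$ are comparable with constant $1$), so it cannot be dismissed as a harmless enlargement of the interval.

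The fix is standard and should be made explicit, by either of two devices: (i) a chaining argument — for $\rho\le 2r$ compare $f$ with its averages at base scale $r/4$ and chain averages of overlapping balls across $\Omega(x_0,\rho)$, which yields $\omega_f(\rho)\le C\sum_{k\ge0}\omega_f(2^{-k-2}r)$ with only scales $<r$ on the right, after which your dyadic-to-integral step closes the bound with $\int_0^r$ (this also repairs the $\omega_f(2r)$ term in the $\varrho_f$ estimate); or (ii) the majorant normalization the paper itself invokes immediately afterwards (cf.\ \eqref{dec_omega} and \cite{DLK20}), replacing $\omega_f$ by a Dini majorant with $\omega(t)/t^\beta$ decreasing and comparable Dini integral, for which $\omega_f(2r)\le C\,\omega_f(t)$ for $t\in[r/2,r]$ is immediate. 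Two smaller remarks: your constant blows up as $\kappa\to0$ (compare each $\omega_f(\kappa^{i}r)$ only with $t\in[\kappa^i r,2\kappa^i r]$ rather than over the whole gap, to avoid the factor $\kappa^{-n}$), and it necessarily blows up as $\kappa\to1$, so the asserted independence of $C_1$ from $\kappa$ can only be meant for $\kappa$ bounded away from $1$ (as in the paper's use with $\kappa=\theta<1/2$); and the measure-density lower bound, as well as the chain-connectivity used in the fix, require more of $\Omega$ than ``a domain'' — a defect shared by the paper's statement, where the constant is allowed to depend on $\Omega$.
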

 
Throughout the remaining paper, we can assume the following  without loss of generality (see \cite[Lemma 2.10.]{DLK20});
\begin{equation} \label{dec_omega}
\omega_\bullet (t)/t^\beta,\; \varrho_\bullet(t)/t^\beta \;\text{are decreasing}\;\text{for}\;\beta\in(0,1].
\end{equation}

For a more comprehensive understanding of the detailed computations and various relationships under the Dini mean oscillation condition, readers may refer to \cite{DK17,DEK18, DLK20,HK20, MMPT23}, as well as the references therein, which provide detailed explanations.

\section{Proof of Interior Lipschitz estimates} \label{section3}
 
 In this section, we prove the interior Lipschitz estimate with Dini mean oscillation inhomogeneous terms using by compactness method.

\begin{lemma}[\emph{One-step improvement}]  \label{lemma01}
There exist constants $\varepsilon_0\in(0,1/4)$ and $\theta\in(0,1/2)$, depending only $n$, $m$, $\lambda$, $\Lambda$, and $\omega_\mathbf{A}$ such that
if $u_\varepsilon$, $f$ satisfy
\[
\bigg(\fint_{B(0,2)}\abs{u_\varepsilon}^2 \bigg)^{1/2}\leq 1,\quad \norm{f}_{L^\infty(B(0,2))}\leq \varepsilon_0,
\]
with \eqref{ellipticity}, \eqref{perodicity}, and
\[
\cL_\varepsilon u_\varepsilon= \di f \; \text{in}\; B(0,2),
\]
then, for any $0<\varepsilon\leq\varepsilon_0$, 
\begin{equation} \label{eq5111thu}
\begin{split}
&\bigg(\fint_{B(0,\theta)} \bigg| u_\varepsilon(x)-\big( I^\beta_j(x)+\varepsilon \chi^\beta_j(x/\varepsilon) \big) \overline{D_ju^\beta_\varepsilon}^{B(0,\theta)}  \\ &\qquad -  \overline{ \bigg( u_\varepsilon-\big( I^\beta_j(x)+\varepsilon \chi^\beta_j(x/\varepsilon) \big) \overline{D_ju^\beta_\varepsilon}^{B(0,\theta)}  \bigg) }^{B(0,\theta)}     \bigg| ^2 dx \bigg)^{1/2}   
\leq \theta\omega_\mathbf{A}(\theta,B(0,2)).
\end{split}
\end{equation}
\end{lemma}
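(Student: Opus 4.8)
The plan is to execute the first, one-step ingredient of the Avellaneda--Lin compactness scheme. The key structural fact is that the object being subtracted is a null solution: by the cell problem \eqref{corrector} and the scaling $v_\varepsilon(x):=\varepsilon v(x/\varepsilon)$, one has $\cL_\varepsilon\big(I^\beta_j(x)+\varepsilon\chi^\beta_j(x/\varepsilon)\big)=0$ in $\bR^n$. Hence, putting $M^\beta_j:=\overline{D_ju^\beta_\varepsilon}^{B(0,\theta)}$ and fixing the constant $c_0$ so that $\fint_{B(0,\theta)}w=0$, the function
\[
w(x):=u_\varepsilon(x)-\big(I^\beta_j(x)+\varepsilon\chi^\beta_j(x/\varepsilon)\big)M^\beta_j-c_0
\]
still solves $\cL_\varepsilon w=\di f$ in $B(0,2)$, and \eqref{eq5111thu} is exactly the estimate $\big(\fint_{B(0,\theta)}|w|^2\big)^{1/2}\le\theta\,\omega_{\mathbf A}(\theta,B(0,2))$. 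Caccioppoli's inequality, together with $\big(\fint_{B(0,2)}|u_\varepsilon|^2\big)^{1/2}\le1$ and $\|f\|_\infty\le\varepsilon_0\le1$, supplies the a priori bound $\big(\fint_{B(0,1)}|\nabla u_\varepsilon|^2\big)^{1/2}\le C$ used below.

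The core is a homogenization (compactness) lemma: for each $\delta>0$ there is $\varepsilon_\delta>0$, depending on $n,m,\lambda,\Lambda,\omega_{\mathbf A}$, such that whenever $0<\varepsilon\le\varepsilon_\delta$, $\|f\|_{L^\infty(B(0,2))}\le\varepsilon_\delta$ and the remaining hypotheses hold, there exists $u_0\in H^1(B(0,1);\bR^m)$ with $\cL_{\hat{\mathbf A},0}u_0=0$ in $B(0,1)$, $\big(\fint_{B(0,1)}|u_0|^2\big)^{1/2}\le C$, and
\[
\Big(\fint_{B(0,1)}|u_\varepsilon-u_0|^2\Big)^{1/2}+\Big(\fint_{B(0,1)}\big|\nabla u_\varepsilon-(I+\nabla_y\chi(\cdot/\varepsilon))\nabla u_0\big|^2\Big)^{1/2}\le\delta .
\]
This is proved by contradiction: if it fails for some $\delta_0>0$, one obtains $\varepsilon_k\downarrow0$ and $f_k\to0$ in $L^\infty(B(0,2))$ for which no such $u_0$ exists; by Caccioppoli and Rellich a subsequence satisfies $u_k\rightharpoonup u_*$ in $H^1(B(0,1))$ and $u_k\to u_*$ in $L^2$, classical qualitative periodic homogenization (Tartar's oscillating test functions) identifies $u_*$ as a solution of $\cL_{\hat{\mathbf A},0}u_*=0$, and the periodic corrector theorem gives strong $L^2$-convergence of the corrected gradients — a contradiction. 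Here $\mathbf A\in\textsf{DMO}$ is used through the correctors: the $C^1$ theory for $\cL_1$ (cf.\ \cite{DK17}) applied to \eqref{corrector} yields $\chi\in C^1$ with $\|\nabla_y\chi\|_{L^\infty([0,1)^n)}\le C(n,m,\lambda,\Lambda,\omega_{\mathbf A})$, while $\|\chi\|_{L^\infty([0,1)^n)}\le C$ already follows from De Giorgi--Nash--Moser.

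With this in hand one selects the parameters. Since $\hat{\mathbf A}$ is constant elliptic, $u_0$ is smooth in $B(0,1/2)$ with $\|u_0\|_{C^2(B(0,1/2))}\le C\big(\fint_{B(0,1)}|u_0|^2\big)^{1/2}\le C$, so for $P(x):=u_0(0)+\nabla u_0(0)\cdot x$ one has $\sup_{B(0,\theta)}|u_0-P|\le C_0\theta^2$ and $|\nabla u_0(0)|\le C$. Writing
\[
w=(u_\varepsilon-u_0)+(u_0-P)+(\nabla u_0(0)-M)\cdot x-\varepsilon\chi^\beta_j(x/\varepsilon)M^\beta_j+(\text{constant})
\]
and estimating in $L^2(B(0,\theta))$ (the constant is irrelevant, as $\fint_{B(0,\theta)}w=0$): the first term is $\le C\theta^{-n/2}\delta$; the second is $\le C_0\theta^2$; for the third, from $M=\fint_{B(0,\theta)}(I+\nabla_y\chi(\cdot/\varepsilon))\nabla u_0+O(\theta^{-n/2}\delta)$, $\big|\fint_{B(0,\theta)}\nabla u_0-\nabla u_0(0)\big|\le C\theta$ (Lipschitz continuity of $\nabla u_0$), and $\big|\fint_{B(0,\theta)}\nabla_y\chi(x/\varepsilon)\,dx\big|\le C\varepsilon/\theta$ (mean of a $1$-periodic, mean-zero function over a ball of radius $\theta/\varepsilon$), one gets $|\nabla u_0(0)-M|\le C\theta+C\theta^{-n/2}\delta+C\varepsilon/\theta$, whence this term is $\le C\theta^2+C\theta^{1-n/2}\delta+C\varepsilon$; the fourth is $\le C\varepsilon\|\chi\|_{L^\infty}|M|\le C\varepsilon$. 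Altogether $\big(\fint_{B(0,\theta)}|w|^2\big)^{1/2}\le C_1\theta^2+C_2(\theta)\,\delta+C_3\varepsilon$. Finally, fix $\theta\in(0,1/4)$ small: using \eqref{dec_omega} and replacing $\omega_{\mathbf A}(t)$ by $\omega_{\mathbf A}(t)+c_0t$ if necessary (which preserves the Dini condition), $C_1\theta^2\le\tfrac12\theta\,\omega_{\mathbf A}(\theta,B(0,2))$ for $\theta$ small; then take $\varepsilon_0:=\varepsilon_\delta$ with $\delta$ small enough that $C_2(\theta)\delta+C_3\varepsilon_0\le\tfrac12\theta\,\omega_{\mathbf A}(\theta,B(0,2))$, which gives \eqref{eq5111thu}.

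The main obstacle is the homogenization lemma, above all the strong $L^2$-convergence of the corrected gradients and the bookkeeping that keeps the constants and the map $\delta\mapsto\varepsilon_\delta$ dependent only on $n,m,\lambda,\Lambda,\omega_{\mathbf A}$; this is where the $\textsf{DMO}$ hypothesis on $\mathbf A$ is reflected, via the regularity of the periodic correctors (and it will be indispensable in the iteration following this lemma). A secondary technical point is the descent from the $L^2(B(0,1))$ homogenization estimate to the small ball $B(0,\theta)$, in particular the control of $M=\overline{D_ju^\beta_\varepsilon}^{B(0,\theta)}$ — a boundary-type average of $u_\varepsilon$ — which is handled precisely by the gradient part of the homogenization lemma combined with the oscillation bound for $\nabla_y\chi(\cdot/\varepsilon)$.
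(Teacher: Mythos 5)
Your argument is correct in substance, but it is organized as the classical two-step Avellaneda--Lin scheme rather than the paper's one-shot contradiction. The paper negates the target inequality \eqref{eq5111thu} itself and passes to the limit directly in it: along the contradicting sequence it only needs $u_{\varepsilon_l}\to u_0$ in $L^2(B(0,1))$, weak $L^2$ convergence of $\nabla u_{\varepsilon_l}$ (so that the fixed-ball averages $\overline{D_ju^\beta_{\varepsilon_l}}^{B(0,\theta)}$ converge), and the $L^2$-boundedness of the correctors to remove $\varepsilon\chi(\cdot/\varepsilon)$; then the interior $C^2$ estimate for the constant-coefficient limit and the choice \eqref{eq5231tue} (via \eqref{dec_omega}) give the contradiction. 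You instead first prove a quantitative approximation lemma (closeness of $u_\varepsilon$ to a homogenized solution together with strong $L^2$ convergence of the corrected gradients) and then conclude by a direct decomposition on $B(0,\theta)$; this gives explicit control of $M=\overline{D_ju^\beta_\varepsilon}^{B(0,\theta)}$, but at the price of invoking the corrector theorem and $\nabla_y\chi\in L^\infty$, which the paper's route avoids entirely, since weak gradient convergence already handles $M$. Two caveats. First, the paper's contradicting sequence varies the coefficients $\mathbf{A}_l$ within the class determined by \eqref{ellipticity}, \eqref{perodicity} and the modulus $\omega_\mathbf{A}$; this is precisely what makes $\varepsilon_0$ and $\theta$ depend only on $(n,m,\lambda,\Lambda,\omega_\mathbf{A})$. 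Your homogenization lemma fixes $\mathbf{A}$ and varies only $(\varepsilon_k,f_k)$, so as written $\varepsilon_\delta$ may depend on $\mathbf{A}$ itself rather than merely on its modulus; this is harmless for the later applications (the lemma is iterated with the same $\mathbf{A}$, rescaled $\varepsilon$), but to obtain the stated dependence you should let $\mathbf{A}_k$ vary in the class as the paper does and use that $\hat{\mathbf{A}}_k$ converges along a subsequence to a constant elliptic matrix. Second, a small slip: for systems ($m>1$) De Giorgi--Nash--Moser does not yield $\chi\in L^\infty$; this is immaterial here because the DMO hypothesis and \cite[Theorem 1.5.]{DK17} already give $\chi\in C^1$ with bounded gradient, which you also use. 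Your final parameter selection (first $\theta$ so that $C_1\theta^2\le\tfrac12\theta\,\omega_\mathbf{A}(\theta,B(0,2))$ using \eqref{dec_omega}, then $\delta$ and $\varepsilon_0$) matches the role of \eqref{eq5231tue} in the paper, including the shared implicit convention that $\omega_\mathbf{A}$ is not identically zero (or is replaced by a slightly larger Dini modulus).
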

\begin{proof}
Suppose that \eqref{eq5111thu} is not true. Then there exists sequence $\{\varepsilon_l\}\subset (0,1/4)$, $\{ \mathbf{A}_l\}\subset \mathsf{DMO}(\bR^n)$ satisfying \eqref{ellipticity} and \eqref{perodicity}, $\{ f_l \}  \subset L^\infty(B(0,2);\bR^{n\times m})$, and $\{  u_{\varepsilon_l}\}\subset W^{1,2}(B(0,2);\bR^m)$ such that $\varepsilon_l\rightarrow 0$,
\begin{align}
\bigg(\fint_{B(0,2)}\abs{u_{\varepsilon_l}}^2 \bigg)^{1/2}\leq 1,\quad \norm{f_l}_{L^\infty((B(0,2))}\leq \varepsilon_l,  \label{eq5112thu}\\
\cL_{\mathbf{A}_l,\varepsilon_l}u_{\varepsilon_l}:=-\di (\mathbf{A}_l (x/\varepsilon_l)\nabla u_{\varepsilon_l})=\di f_l\;\text{in}\;B(0,2),  \label{eq5113thu}
\end{align}
and
\begin{equation} \label{eq5114thu}  
\begin{split}
&\bigg(\fint_{B(0,\theta)} \bigg| u_{\varepsilon_l}(x)-\big( I^\beta_j(x)+\varepsilon \chi^\beta_{\mathbf{A}_l,j}(x/\varepsilon) \big) \overline{D_ju^\beta_{\varepsilon_l}}^{B(0,\theta)}  \\ &\qquad -  \overline{ \bigg( u_{\varepsilon_l}-\big( I^\beta_j(x)+\varepsilon \chi^\beta_{\mathbf{A}_l,j}(x/\varepsilon) \big) \overline{D_ju^\beta_{\varepsilon_l}}^{B(0,\theta)}  \bigg) }^{B(0,\theta)}     \bigg| ^2 dx \bigg)^{1/2}   
>\theta\omega_\mathbf{A}(\theta,B(0,2)).
\end{split}
\end{equation}
where $\chi^\beta_{\mathbf{A}_l,j}$ denotes the correctors for $\mathbf{A}_l$.
Observe that, by \eqref{eq5112thu}, \eqref{eq5113thu} and Caccioppoli's inequality, $\{u_{\varepsilon_l}\}$ is uniformly bounded in $W^{1,2}(B(0,1))$. By passing to subsequences, we see that, as $l\rightarrow \infty$, 
\begin{align*}
&u_{\varepsilon_l} \rightarrow u_0\quad\;\text{in}\; L^2(B(0,1)),\qquad u_{\varepsilon_l} \rightharpoonup  u_0\quad\;\text{in}\; L^2(B(0,2)),\\
&D_j u_{\varepsilon_l} \rightharpoonup D_j u_0\quad\;\text{in}\; L^2(B(0,1)),\\
&f_l\rightarrow 0\quad\; \text{in}\; L^\infty(B(0,2)),\qquad \hat{\mathbf{A}_l}\rightarrow \mathbf{A}^0,
\end{align*}
and
\begin{equation}  \label{eq5116thu}
\cL_{\mathbf{A}^0,0}u_0=0\;\text{in}\;B(0,1),
\end{equation}

where $\hat{\mathbf{A}_l}$ denotes the homogenized coefficients for $\mathbf{A}_l$ and $\mathbf{A}^0$ is some constant matrix satisfying \eqref{ellipticity}, \eqref{perodicity}. We now let $l\rightarrow \infty$, in \eqref{eq5112thu} and \eqref{eq5114thu}. This leads to
\begin{equation}  \label{eq5115thu}
\bigg(\fint_{B(0,2)}\abs{u_0}^2 \bigg)^{1/2}\leq 1
\end{equation}
and 
\[  
\bigg(\fint_{B(0,\theta)} \bigg| u_0(x)-x_j \overline{D_ju_0}^{B(0,\theta)}  -  \overline{  u_0 }^{B(0,\theta)}    \bigg| ^2 dx \bigg)^{1/2}   
\geq \theta\omega_\mathbf{A}(\theta,B(0,2)).
\]
Here, we have used the fact that $\chi^\beta_{\mathbf{A}_l,j}$ is bounded in $L^2([0,1)^n;\bR^{n\times m^2})$. 
From interior $C^2$ estimates of $\eqref{eq5116thu}$ with $\eqref{eq5115thu}$, for any $\theta\in(0,1/2)$, we have that
\[  
\bigg(\fint_{B(0,\theta)} \bigg| u_0(x)-x_j \overline{D_ju_0}^{B(0,\theta)}  -  \overline{  u_0 }^{B(0,\theta)}    \bigg| ^2 dx \bigg)^{1/2} \leq C_2 \theta^2 
\]
where $C_2=C_2(n,m,\lambda,\Lambda)$. Then, for some $0<\varepsilon_0<\frac{1}{4}\min\{1,1/C_1\}$ where the constant $C_1$ comes from Lemma \ref{lemmaC1}, we can choose sufficiently small $\theta$ such that
\begin{equation}	\label{eq5231tue}
C(\lambda,n,m)C_2\theta^{1-\beta}\leq \frac{1}{2} \quad\text{and}\quad\theta+\int^{\theta}_0\frac{\omega_{\mathbf{A}}(t,B(0,2))}{t}dt\leq \varepsilon_0,   
\end{equation}
where $\theta<1/2$, 
and $0<\beta<1$ with $\eqref{dec_omega}$. It then follows that 
\[  
\bigg(\fint_{B(0,\theta)} \bigg| u_0(x)-x_j \overline{D_ju_0}^{B(0,\theta)}  -  \overline{  u_0 }^{B(0,\theta)}    \bigg| ^2 dx \bigg)^{1/2} \leq \frac{1}{2}\theta \omega_\mathbf{A}(\theta,B(0,2)),
\]
which is in contradiction. Therefore, $\eqref{eq5111thu}$ holds for some $0<\varepsilon_0<\frac{1}{4}\min\{1,\frac{1}{C_1}\}$.
\end{proof}

\begin{lemma}[\emph{Iteration}]  \label{lemma02}
Let $\varepsilon_0$ and $\theta$ be the constants given by Lemma \ref{lemma01}. 
Suppose the functions $u_\varepsilon$, $f$ satisfy
\[
\bigg(\fint_{B(0,2)}\abs{u_\varepsilon}^2 \bigg)^{1/2}\leq 1,\quad C_1\int^1_0\frac{ \omega_f(t,B(0,2))}{t}dt\leq \varepsilon_0, 
\]
and
\[
\cL_\varepsilon u_\varepsilon=\di f \; \text{in}\; B(0,2),
\]
then, for some $k\geq 1$ with $0<\varepsilon<\theta^{k-1}\varepsilon_0$, there exists constants $\mathbf{B}(\varepsilon,l)=\big(\mathbf{B}^\beta_j(\varepsilon,l)\big)\in \bR^{n\times m}$ for $1\leq l\leq k$, such that
\begin{equation}  \label{eq5151mon}
\begin{split}
&\abs{\mathbf{B}(\varepsilon,l)}\leq C\big(1+\sum^{l-1}_{i=0}\int^{\theta^i}_0\frac{\omega_\mathbf{A}(t,B(0,2))+C_1\omega_f(t,B(0,2))}{t}dt\big),\\
&\abs{\mathbf{B}(\varepsilon,l+1)-\mathbf{B}(\varepsilon,l)}\leq C\int^{\theta^l}_0\frac{\omega_\mathbf{A}(t,B(0,2))+C_1\omega_f(t,B(0,2))}{t}dt,
\end{split}
\end{equation}
and
\begin{align}   
\bigg(\fint_{B(0,\theta^l)} \bigg| u_\varepsilon(x)&-\big( I^\beta_j(x)+\varepsilon \chi^\beta_j(x/\varepsilon) \big) \mathbf{B}_j^\beta(\varepsilon,l) \nonumber \\ &\qquad\qquad -  \overline{ \bigg( u_\varepsilon-\big( I^\beta_j(x)+\varepsilon \chi^\beta_j(x/\varepsilon) \big)\mathbf{B}_j^\beta(\varepsilon,l)  \bigg) }^{B(0,\theta^l)}     \bigg| ^2 dx \bigg)^{1/2}   \nonumber
\\ &\leq  \frac{\theta^l }{\varepsilon_0}\int^{\theta^l}_0\frac{\omega_\mathbf{A}(t,B(0,2))+C_1\omega_f(t,B(0,2))}{t}dt.\label{eq5152mon}
\end{align}
where $C=C(n,m,\lambda,\Lambda,\omega_\mathbf{A})$.
\end{lemma}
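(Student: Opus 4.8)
The plan is to prove Lemma \ref{lemma02} by induction on $l$, using Lemma \ref{lemma01} (the one-step improvement) as the engine at each stage, together with a rescaling argument that reduces the $l$-th step back to the base configuration on $B(0,2)$. First I would set up the induction: the case $l=1$ is essentially Lemma \ref{lemma01} applied with $\mathbf{B}(\varepsilon,1):=\overline{D_ju^\beta_\varepsilon}^{B(0,\theta)}$, after checking that the smallness hypothesis $\|f\|_{L^\infty(B(0,2))}\le\varepsilon_0$ needed there follows from $C_1\int_0^1\omega_f(t,B(0,2))/t\,dt\le\varepsilon_0$ via Lemma \ref{lemmaC1} (which bounds $\varrho_f$, hence the sup norm of the oscillation, by the Dini integral); the size bound $|\mathbf{B}(\varepsilon,1)|\le C$ comes from Caccioppoli plus the normalization $(\fint_{B(0,2)}|u_\varepsilon|^2)^{1/2}\le 1$.

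For the inductive step, assuming \eqref{eq5151mon}--\eqref{eq5152mon} hold up to level $l$, I would introduce the rescaled function. Write $v(x):=\big(u_\varepsilon(\theta^l x)-(I^\beta_j(\theta^l x)+\varepsilon\chi^\beta_j(\theta^l x/\varepsilon))\mathbf{B}^\beta_j(\varepsilon,l)-(\text{its average})\big)/\rho_l$, where $\rho_l$ is the right-hand side of \eqref{eq5152mon} (so that $(\fint_{B(0,2)}|v|^2)^{1/2}\le 1$ after also controlling the annulus $B(0,2)\setminus B(0,\theta^l)$ — here one uses that $\omega_\mathbf{A},\omega_f$ are, up to the $C_1$ factor, essentially doubling-type quantities and that $\rho_l$ at scale $\theta^{l-1}$ is comparable, by \eqref{dec_omega}, to $\rho_l$ at scale $\theta^l$, so the normalization propagates). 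A direct computation shows $v$ solves $\cL_{\tilde\varepsilon}v=\di\tilde f$ on $B(0,2)$ with $\tilde\varepsilon=\varepsilon/\theta^l<\varepsilon_0/\theta^{-1}$... actually with $\tilde\varepsilon=\varepsilon\theta^{-l}$ which is $<\varepsilon_0$ by the hypothesis $\varepsilon<\theta^{k-1}\varepsilon_0$ and $l\le k-1$ in the relevant range; the new source $\tilde f$ is the rescaled $f$ together with a term coming from the fact that $\mathbf{A}(\theta^l x/\varepsilon)$ differs from its "homogenized constant" — and its $L^\infty$ norm is controlled by $\omega_\mathbf{A}(\theta^l,B(0,2))+C_1\omega_f(\theta^l,B(0,2))$ divided by $\rho_l/\theta^l$, which is $\le\varepsilon_0$ by the definition of $\rho_l$. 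Then Lemma \ref{lemma01} applied to $v$ yields the level-$(l+1)$ estimate with $\mathbf{B}(\varepsilon,l+1)-\mathbf{B}(\varepsilon,l)$ equal to $\rho_l\theta^{-l}$ times the corrected average gradient of $v$, giving precisely the increment bound in \eqref{eq5151mon}; summing the increments and using the base bound gives the size bound on $|\mathbf{B}(\varepsilon,l+1)|$.

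The main obstacle I anticipate is the bookkeeping in the rescaling step: one must verify that when passing from scale $\theta^l$ to scale $1$, the corrector term $\varepsilon\chi^\beta_j(x/\varepsilon)$ rescales correctly to $\tilde\varepsilon\chi^\beta_j(x/\tilde\varepsilon)$ for the \emph{same} periodic $\chi$ (this works because $\chi$ is $1$-periodic and independent of scale, but the affine part $I^\beta_j$ picks up the factor $\theta^l$ which must be tracked), and that the "error source" $\tilde f$ genuinely has $L^\infty$ norm bounded by $\omega_\mathbf{A}(\theta^l)+C_1\omega_f(\theta^l)$ rather than something larger — this is where the precise form of the cell problem \eqref{corrector} and the interior gradient bound on $\chi$ enter, and where the choice of $\varepsilon_0$ from Lemma \ref{lemma01} must be reconciled with the factor $1/\varepsilon_0$ appearing in \eqref{eq5152mon}. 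A secondary technical point is ensuring the normalization $(\fint_{B(0,2)}|v|^2)^{1/2}\le 1$ rather than merely $(\fint_{B(0,\theta^l)}\cdots)^{1/2}\le 1$: one needs an a priori bound on $u_\varepsilon$ on the larger ball in terms of its value on $B(0,\theta^{l-1})$, obtained by applying the induction hypothesis at level $l-1$ together with Caccioppoli, and here the monotonicity property \eqref{dec_omega} of $\omega_\mathbf{A}/t^\beta$ is exactly what makes $\rho_{l-1}$ and $\rho_l$ comparable up to a constant absorbed into $C$.
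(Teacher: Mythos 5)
Your overall skeleton (induction, rescaling by $\theta^l$ with the normalizing factor $\rho_l=\theta^l\varepsilon_0^{-1}\int_0^{\theta^l}(\omega_{\mathbf A}+C_1\omega_f)t^{-1}dt$, one application of Lemma \ref{lemma01} per step) is the paper's, but the inductive step as you describe it has a genuine gap: you never verify the improvement of the right-hand side of \eqref{eq5152mon} from $\int_0^{\theta^l}$ to $\int_0^{\theta^{l+1}}$, and the mechanism you propose cannot supply it. Applying Lemma \ref{lemma01} to the rescaled function gives an improvement whose size is the \emph{fixed} quantity $\theta\,\omega_{\bullet}(\theta,B(0,2))$, independent of how small the rescaled source $\tilde f$ is; the smallness $\|\tilde f\|_{L^\infty}\le\varepsilon_0$ that you argue for is only a hypothesis of Lemma \ref{lemma01}, not the source of the gain. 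Moreover, the extra source term you invoke ("$\mathbf{A}(\theta^l x/\varepsilon)$ differs from its homogenized constant") does not exist: since $\cL_\varepsilon\big[(I^\beta_j+\varepsilon\chi^\beta_j(\cdot/\varepsilon))\mathbf{B}^\beta_j(\varepsilon,l)\big]=0$ by \eqref{corrector}, the rescaled equation reads $\cL_{\varepsilon/\theta^l}w_\varepsilon=\di\tilde f$ with $\tilde f$ built only from $f(\theta^l\cdot)-f(0)$; and had you subtracted only the affine part, the resulting source $\di\big[(\mathbf A(\cdot/\varepsilon)-\hat{\mathbf A})\mathbf B\big]$ would be of unit size, not $O(\omega_{\mathbf A}(\theta^l))$, since the pointwise deviation of the oscillating coefficient from the homogenized matrix is never small. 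What actually closes the induction in the paper is that the one-step gain for the rescaled problem is measured by $\omega_{\tilde{\mathbf A}}(\theta,B(0,2))$ with $\tilde{\mathbf A}=\mathbf A(\theta^l\cdot)$, i.e.\ by the mean oscillation of $\mathbf A$ at the \emph{current} scale $\theta^{l+1}$; combining $\omega_{\mathbf A}(\theta^{l+1})\le C_1\int_0^{\theta^{l+1}}\omega_{\mathbf A}(t)t^{-1}dt$ (Lemma \ref{lemmaC1}) with $\int_0^{\theta^l}(\omega_{\mathbf A}+C_1\omega_f)t^{-1}dt\le 2\varepsilon_0$ and the choices in \eqref{eq5231tue} yields $\rho_l\cdot\theta\,\omega_{\mathbf A}(\theta^{l+1})\le\rho_{l+1}$. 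Without this scale-by-scale bookkeeping your argument only produces $\rho_l\cdot\theta\,\omega_{\mathbf A}(\theta)$ at level $l+1$, which is not $\le\rho_{l+1}$ in general, so \eqref{eq5152mon} is not propagated.

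A secondary problem is your fix for the normalization $\big(\fint_{B(0,2)}|w_\varepsilon|^2\big)^{1/2}\le 1$: invoking level $l-1$ and the comparability of $\rho_{l-1}$ and $\rho_l$ "up to a constant absorbed into $C$" does not work, because Lemma \ref{lemma01} requires the exact unit normalization and any multiplicative constant would recur and compound at every step of the iteration (the conclusion would degrade geometrically). The correct remedy is to arrange the radii in the induction statement so that the rescaled hypothesis ball is exactly covered by the ball on which the level-$l$ estimate holds (the paper is terse on this point as well, but it is a matter of restating the radii, not of inserting a constant). You do correctly identify the base case, the role of the reduction $f\mapsto f-f(0)$, and the unscaled increment $\mathbf B(\varepsilon,l+1)-\mathbf B(\varepsilon,l)=\rho_l\theta^{-l}\,\overline{D_jw^\beta_\varepsilon}^{B(0,\theta)}$ with its bound via Caccioppoli, which matches the paper.
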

\begin{proof}
Without loss of generality, we can write 
\[
\cL_\varepsilon u_\varepsilon=\di (f-f(0)) \; \text{in}\; B(0,2).
\]
We prove the lemma by induction on $k$. First, the case $k=1$ hold from Lemma \ref{lemma01} with 
\begin{align*}
\norm{f-f(0)}_{L^\infty(B(0,2))}&
\leq C_1 \int^1_0\frac{\omega_f(t,B(0,2))}{t}dt\leq \varepsilon_0,\quad \mathbf{B}^\beta_j(\varepsilon,1)=\overline{D_ju^\beta_\varepsilon}^{B(0,\theta)},\\
\theta\omega_\mathbf{A}(\theta,B(0,2))&\leq \theta C_1\int^{\theta }_0\frac{\omega_\mathbf{A}(t,B(0,2)) }{t}dt\\&\leq \frac{\theta}{\varepsilon_0} \int^{\theta}_0\frac{\omega_\mathbf{A}(t,B(0,2))+C_1\omega_f(t,B(0,2)) }{t}dt
\end{align*}
and \eqref{eq5151mon} be followed from Caccioppoli's inequality with constants $\theta$, $\varepsilon_0$.

Suppose there exists constants $B(\varepsilon,l)$ such that $\eqref{eq5151mon}$, $\eqref{eq5152mon}$ for all integers up to some $l$, where $1\leq l\leq k-1$ with $\varepsilon<\theta^{k-1}\varepsilon_0$. For $x\in B(0,2)$, we consider the function 
\begin{align*}
&w_\varepsilon(x)=\\&\frac{u_\varepsilon(\theta^lx)-\big( I^\beta_j(\theta^lx)+\varepsilon \chi^\beta_j(\theta^l x/\varepsilon) \big) \mathbf{B}_j^\beta(\varepsilon,l)-  \overline{ \bigg( u_\varepsilon-\big( I^\beta_j(x)+\varepsilon \chi^\beta_j(x/\varepsilon) \big) \mathbf{B}_j^\beta(\varepsilon,l) \bigg) }^{B(0,\theta^l)} }{\theta^l \varepsilon^{-1}_0\int^{\theta^l}_0\frac{\omega_\mathbf{A}(t,B(0,2))+C_1\omega_f(t,B(0,2))}{t}dt}.
\end{align*}
By the scaling property and \eqref{corrector}, we have
\begin{equation}	\label{eq5161tue}
\cL_{\varepsilon/\theta^l} w_\varepsilon=\di \tilde{f}  
\end{equation}
where $\tilde{f}  =\varepsilon_0 (f(\theta^lx)-f(0))/(\int^{\theta^l}_0\frac{\omega_\mathbf{A}(t,B(0,2))+C_1\omega_f(t,B(0,2))}{t}dt)$ 
and observe that
\begin{align*}
C_1\int^1_0 \frac{\omega_{\tilde{f}  }(t,B(0,2))}{t}dt\leq \frac{\varepsilon_0C_1}{\int^{\theta^l}_0\frac{\omega_\mathbf{A}(t,B(0,2))+C_1\omega_f(t,B(0,2))}{t}dt} \int^{1}_0\frac{\omega_f(\theta^l t,B(0,2))}{t}dt\leq \varepsilon_0
\end{align*}
 
By the induction hypothesis with \eqref{eq5152mon}, we obtain
\begin{equation}	\label{eq5163tue}
\bigg(\fint_{B(0,2)}\abs{w_\varepsilon}^2 \bigg)^{1/2}\leq 1
\end{equation}
Applying Lemma \ref{lemma01} with \eqref{eq5161tue}, $\varepsilon\theta^{-l}\leq \varepsilon \theta^{-k+1}<\varepsilon_0$, and $\norm{\tilde{f}  }_{L^\infty(B(0,2))}\leq \varepsilon_0$, we have
\begin{equation*}  
\begin{split}
&\bigg(\fint_{B(0,\theta)} \bigg| w_\varepsilon(x)-\big( I^\beta_j(x)+\frac{\varepsilon}{\theta^l} \chi^\beta_j(\theta^l x/\varepsilon) \big) \overline{D_jw^\beta_\varepsilon}^{B(0,\theta)}  \\ &\qquad -  \overline{ \bigg( w_\varepsilon-\big( I^\beta_j(x)+\frac{\varepsilon}{\theta^l} \chi^\beta_j(\theta^l x/\varepsilon) \big) \overline{D_jw^\beta_\varepsilon}^{B(0,\theta)}  \bigg) }^{B(0,\theta)}     \bigg| ^2 dx \bigg)^{1/2}   
\leq \theta\omega_{\tilde{\mathbf{A} } }(\theta,B(0,2)).
\end{split}
\end{equation*}
where $\tilde{\mathbf{A} } =\mathbf{A}(\theta^lx)$. 
Using the definition of $w_\varepsilon$ and scaling, we rewrite that
\begin{equation*}
\begin{split}
\bigg(\fint_{B(0,\theta^{l+1})} \bigg| u_\varepsilon(x)&-\big( I^\beta_j(x)+\varepsilon \chi^\beta_j(x/\varepsilon) \big) \mathbf{B}_j^\beta(\varepsilon,l+1) \\&\qquad\qquad  -  \overline{ \bigg( u_\varepsilon-\big( I^\beta_j(x)+\varepsilon \chi^\beta_j(x/\varepsilon) \big) \mathbf{B}_j^\beta(\varepsilon,l+1)  \bigg) }^{B(0,\theta^{l+1})}     \bigg| ^2 dx \bigg)^{1/2} \\  
&\leq  \frac{  \theta^{l+1}\omega_{\tilde{\mathbf{A} } }(\theta,B(0,2))}{\varepsilon_0}\int^{\theta^l}_0\frac{\omega_\mathbf{A}(t,B(0,2))+C_1\omega_f(t,B(0,2))}{t}dt,
\end{split}
\end{equation*}
where 
\[
\mathbf{B}_j^\beta(\varepsilon,l+1) =\mathbf{B}_j^\beta(\varepsilon,l)+\overline{D_jw^\beta_\varepsilon}^{B(0,\theta)}\int^{\theta^l}_0\frac{\omega_\mathbf{A}(t,B(0,2))+C_1\omega_f(t,B(0,2))}{t}dt
\]
By definition of $\omega$, scaling, aussmption of $f$ and \eqref{eq5231tue}, we observe that
\begin{align*}
&\frac{  \theta^{l+1}\omega_{\tilde{\mathbf{A} } }(\theta,B(0,2))}{\varepsilon_0}\int^{\theta^l}_0\frac{\omega_\mathbf{A}(t,B(0,2))+C_1\omega_f(t,B(0,2))}{t}dt\\
&\leq \theta^{l+1}\frac{C_1}{\varepsilon_0}(\varepsilon_0+\varepsilon_0)\int^{\theta^{l+1}}_0\frac{\omega_\mathbf{A}(t,B(0,2))+C_1\omega_f(t,B(0,2))}{t}dt\\
&\leq \frac{  \theta^{l+1} }{\varepsilon_0}\int^{\theta^{l+1}}_0\frac{\omega_\mathbf{A}(t,B(0,2))+C_1 \omega_f(t,B(0,2))}{t}dt.
\end{align*}
By Caccioppoli's inequality with constants $\theta$, $\varepsilon_0$, \eqref{eq5163tue}, and
\[
\norm{\tilde{f} }_{L^\infty(B(0,2))}\leq \varepsilon_0,
\]
we obtain
\[
\abs{\overline{D_jw^\beta_\varepsilon}^{B(0,\theta)}}\leq C,
\]
where $C=C(n,m,\lambda,\Lambda)$. 
By combining all of them, we conclude \eqref{eq5151mon}, \eqref{eq5152mon}. This completes the induction.
\end{proof}

We now turn to interior estimates by using the blow-up method, the last step of the compactness method, considering the sensitive term $\mathbf{B}(\varepsilon,\cdot)$ with respect to $\varepsilon$.

\begin{lemma}[\emph{Blow-up method}] \label{lemma03}
Assume the coefficients $\mathbf{A}=(a^{ij})$ of the operator $\cL_\varepsilon$ satisfy the  condition \eqref{ellipticity}, \eqref{perodicity} and are of Dini mean oscillation in $\bR^n$. Let $u_\varepsilon\in W^{1,2}(B(0,2);\bR^m)$ be a weak solution of
\[ 
\cL_\varepsilon u_\varepsilon=\di f \;\text{ in }\;B(0,2)
\]
where $f:\Omega\rightarrow\bR^{n\times m}$ are of Dini mean oscillation. Then, we have
 
\begin{equation}  \label{lip_esti}
\norm{\nabla u_\varepsilon}_{L^\infty(B(0,1))}\leq C\Bigg\{   \bigg(\fint_{B(0,2)}\abs{  u_\varepsilon}^2 \bigg)^{1/2}  +\int^1_0\frac{\omega_f(t,B(0,2))}{t}dt  \Bigg\},
\end{equation}
where $C=C(n,m,\lambda,\Lambda,\omega_{\mathbf{A}})$ .
\end{lemma}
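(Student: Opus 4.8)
The plan is to combine Lemma~\ref{lemma02} (iteration) with a blow-up argument to get pointwise control of $\nabla u_\varepsilon$ at the origin, and then a translation argument to cover all of $B(0,1)$. First I would normalize: by scaling and linearity we may assume $(\fint_{B(0,2)}|u_\varepsilon|^2)^{1/2}\le 1$ and $C_1\int_0^1 \omega_f(t,B(0,2))/t\,dt\le\varepsilon_0$, so that Lemma~\ref{lemma02} applies. The iteration produces, for each scale $\theta^l$ (as long as $\varepsilon<\theta^{l-1}\varepsilon_0$), vectors $\mathbf B(\varepsilon,l)$ with the two bounds in \eqref{eq5151mon} and the decay estimate \eqref{eq5152mon}. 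The first bound in \eqref{eq5151mon} together with Lemma~\ref{lemmaC1} (which controls $\sum_i \int_0^{\theta^i}(\omega_\mathbf{A}+C_1\omega_f)/t\,dt$ by a constant multiple of $\int_0^1(\omega_\mathbf{A}(t)+C_1\omega_f(t))/t\,dt$, a finite quantity by the Dini hypotheses) shows that $|\mathbf B(\varepsilon,l)|\le C$ uniformly in $l$ and $\varepsilon$, for all admissible $l$.

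\textbf{Blow-up at a point.} Fix $x\in B(0,1)$ (start with $x=0$). Given $\varepsilon$, let $k=k(\varepsilon)$ be the largest integer with $\varepsilon<\theta^{k-1}\varepsilon_0$, i.e.\ $\theta^k\varepsilon_0\le\varepsilon/\theta<\theta^{k-1}\varepsilon_0$, so $\theta^k\sim\varepsilon$. Apply \eqref{eq5152mon} at level $l=k$: the left side is a normalized $L^2$ oscillation of $u_\varepsilon(x)-(I^\beta_j(x)+\varepsilon\chi^\beta_j(x/\varepsilon))\mathbf B_j^\beta(\varepsilon,k)$ over $B(0,\theta^k)$, bounded by $\frac{\theta^k}{\varepsilon_0}\int_0^{\theta^k}(\omega_\mathbf{A}+C_1\omega_f)/t\,dt$. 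On the ball $B(0,\theta^k)$ we have $\theta^k/\varepsilon\sim 1$, so the corrector term $\varepsilon\chi^\beta_j(x/\varepsilon)$ is, after rescaling to the unit ball, a fixed smooth cell-problem solution; this means that $w(y):=\theta^{-k}\big(u_\varepsilon(\theta^k y)-(\cdots)\big)$ solves $\cL_{\varepsilon/\theta^k}w=\di\tilde f$ on $B(0,2)$ with $\varepsilon/\theta^k\sim 1$ bounded, $\|w\|_{L^2(B(0,2))}\le C$ and $\|\tilde f\|_{L^\infty}$ small, whence the interior $W^{1,p}$-estimate of Lemma~\ref{lemmaholder} (or Caccioppoli plus the $C^\mu$ bound) gives $|\nabla w(0)|$-type control — more precisely $\fint_{B(0,1)}|\nabla w|\le C$. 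Unwinding the scaling, $\fint_{B(0,\theta^k)}|\nabla u_\varepsilon - \nabla(I^\beta_j+\varepsilon\chi^\beta_j(\cdot/\varepsilon))\mathbf B_j^\beta(\varepsilon,k)|\le C$, and since $|\nabla(I^\beta_j(x)+\varepsilon\chi^\beta_j(x/\varepsilon))|=|e_j^\beta+\nabla\chi^\beta_j(x/\varepsilon)|\le C$ (the correctors have bounded gradients in the DMO setting, by \cite{DK17,DLK20}) and $|\mathbf B(\varepsilon,k)|\le C$, we get $\fint_{B(0,\theta^k)}|\nabla u_\varepsilon|\le C$. Combined with the standard local Lipschitz bound for $\cL_\varepsilon$ on the scale $\varepsilon\sim\theta^k$ (where the operator is uniformly elliptic with continuous coefficients after rescaling), this upgrades to $|\nabla u_\varepsilon(0)|\le C$ with $C$ as in \eqref{lip_esti}.

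\textbf{From the origin to all of $B(0,1)$.} The estimate at a general point $x\in B(0,1)$ follows by translating the ball: $u_\varepsilon$ solves $\cL_\varepsilon u_\varepsilon=\di f$ on $B(x,1)\subset B(0,2)$, and running the same normalization and iteration centered at $x$ (note $\omega_\mathbf{A}$ is translation-invariant by periodicity, and $\omega_f(t,B(x,1))\le\omega_f(t,B(0,2))$) yields $|\nabla u_\varepsilon(x)|\le C\big\{(\fint_{B(0,2)}|u_\varepsilon|^2)^{1/2}+\int_0^1\omega_f(t,B(0,2))/t\,dt\big\}$ with $C=C(n,m,\lambda,\Lambda,\omega_\mathbf{A})$. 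Taking the supremum over $x\in B(0,1)$ gives \eqref{lip_esti}. The main obstacle I expect is the blow-up step: one must carefully track that at scale $\theta^k\sim\varepsilon$ the rescaled corrector term converges (along subsequences) to a fixed bounded object so that the rescaled function $w$ genuinely satisfies a uniformly elliptic equation with uniformly small right-hand side and bounded $L^2$ norm, and then convert the resulting gradient bound for $w$ back through the $\varepsilon$-dependent scaling without losing the sharp dependence on $\int_0^1\omega_f(t)/t\,dt$; the bounded-gradient property of the DMO correctors (rather than merely $L^2$-boundedness, which is all that was needed in Lemma~\ref{lemma01}) is the key input here and must be invoked from the cited general regularity theory.
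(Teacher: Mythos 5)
There is a genuine gap at the point where you assert that $|\mathbf B(\varepsilon,l)|\le C$ uniformly in $l$ and $\varepsilon$. Lemma \ref{lemmaC1} bounds the sum of \emph{values} of the modulus at geometric scales, $\sum_i\omega(\kappa^i r)$, by the Dini integral $\int_0^r\omega(t)t^{-1}\,dt$; it does not bound the sum of Dini \emph{tails} $\sum_{i}\int_0^{\theta^i}\big(\omega_{\mathbf{A}}(t)+C_1\omega_f(t)\big)t^{-1}\,dt$ that appears in \eqref{eq5151mon}. That double sum is comparable to $\int_0^1\omega(t)\log(1/t)\,t^{-1}dt$, which can diverge under a mere Dini condition (e.g. $\omega(t)\sim(\log(e/t))^{-2}$). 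Consequently the iteration of Lemma \ref{lemma02} only yields $|\mathbf B(\varepsilon,k)|\le C(1+k)\lesssim C\log(1/\varepsilon)$ (the paper even settles for the cruder bound $C(1+1/\varepsilon)$), not a uniform bound. This is fatal to your unwinding step: you recover $\fint_{B(0,\theta^k)}|\nabla u_\varepsilon|$ by adding back $\nabla\big[(I^\beta_j+\varepsilon\chi^\beta_j(\cdot/\varepsilon))\mathbf B^\beta_j(\varepsilon,k)\big]$, whose size is of order $|\mathbf B(\varepsilon,k)|$, so the constant in your final estimate degenerates as $\varepsilon\to0$ and the uniform Lipschitz bound is lost.

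The paper's proof circumvents exactly this difficulty by never differentiating the corrector term: it works at the level of the function. Since $\fint_{B(0,\theta^k)}|I^\beta_j+\varepsilon\chi^\beta_j(\cdot/\varepsilon)|^2\le C\varepsilon^2$ when $\theta^k\sim\varepsilon$ (using the $L^\infty$ bound on $\chi$ from \cite{DK17}), the possibly large factor $|\mathbf B(\varepsilon,k)|$ is compensated by the $\varepsilon$-smallness of $I+\varepsilon\chi(\cdot/\varepsilon)$ on the $\varepsilon$-ball, giving the plain oscillation bound \eqref{eq5233tue}, $\big(\fint_{B(0,\theta^k)}|v_\varepsilon-\overline{v_\varepsilon}|^2\big)^{1/2}\le C(1+\varepsilon)$, with no $\mathbf B$ remaining. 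Then the blow-up $w_\varepsilon(x)=(1+\varepsilon)^{-1}\big(v_\varepsilon(\varepsilon x)-\overline{v_\varepsilon}^{B(0,\varepsilon\theta/\varepsilon_0)}\big)$ solves a unit-scale problem $\cL_1 w_\varepsilon=\di\tilde h$ with Dini mean oscillation data, the $C^1$ estimate of \cite{DK17} gives $\|\nabla w_\varepsilon\|_{L^\infty(B(0,\theta/2))}\le C$, hence $\|\nabla u_\varepsilon\|_{L^\infty(B(0,\varepsilon\theta/2))}\le CJ$, and translation finishes. If you want to keep your route (uniformly bounded $\mathbf B$ plus bounded corrector gradients), you would first have to strengthen Lemma \ref{lemma02} so that the increments $|\mathbf B(\varepsilon,l+1)-\mathbf B(\varepsilon,l)|$ are controlled by modulus values rather than by Dini tails; as the lemmas stand, your step is unjustified. (You also leave the regime $\varepsilon\ge\varepsilon_0\theta$ untreated, which the paper handles separately by the unit-scale $C^1$ estimate; this part is minor.)
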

\begin{proof}
We may assume that $0<\varepsilon<\varepsilon_0\theta$, where $\varepsilon_0$, $\theta$ are the constants given by Lemmae \ref{lemma01}.  Because in the case of $\varepsilon_0\theta\leq \varepsilon$, it is clearly by $C^1$ estimates with Dini mean oscillation of $\mathbf{A}$ (See, \cite[Theorem 1.5.]{DK17}) and the observation that, for $\varepsilon\geq\varepsilon_0\theta$,
\begin{equation*}  
\int^1_0\frac{\omega_{\mathbf{A}(\cdot/\varepsilon)}(t,B(0,2))}{t}dt=\int^1_0\frac{\omega_\mathbf{A}(t/\varepsilon,B(0,2))}{t}dt\leq \int^{\frac{1}{\varepsilon_0\theta}}_0\frac{\omega_\mathbf{A}(t,B(0,2))}{t}dt<+\infty.
\end{equation*}
Now, we choose $k\geq 2$ such that
\begin{equation}	\label{eq5232tue}
\varepsilon_0\theta^k\leq \varepsilon<\varepsilon_0\theta^{k-1},
\end{equation}
and define
\[
J=\bigg(\fint_{B(0,2)}\abs{  u_\varepsilon}^2 \bigg)^{1/2} +\frac{C_1}{\varepsilon_0}\int^1_0\frac{\omega_f(t,B(0,2))}{t}dt  .
\]
For $x\in B(0,2)$, let 
\[
v_\varepsilon(x)=J^{-1}u_\varepsilon(x)\quad\text{and}\quad h(x)=J^{-1}f(x).
\]
It is easy to check that
\[
\bigg(\fint_{B(0,2)}\abs{  v_\varepsilon}^2 \bigg)^{1/2}\leq 1,\;C_1\int^1_0\frac{\omega_h(t,B(0,2))}{t}dt \leq \varepsilon_0,\;\text{and}\;\cL_\varepsilon v_\varepsilon=\di h \;\text{ in }\;B(0,2).
\]
Using Lemma \ref{lemma02}, we get
\begin{equation*}
\begin{split}
\bigg(\fint_{B(0,\theta^k)} \bigg| v_\varepsilon(x)&-\big( I^\beta_j(x)+\varepsilon \chi^\beta_j(x/\varepsilon) \big) \mathbf{B}_j^\beta(\varepsilon,k)  \\ &\qquad -  \overline{ \big( v_\varepsilon-\big( I^\beta_j(x)+\varepsilon \chi^\beta_j(x/\varepsilon) \big) \mathbf{B}_j^\beta(\varepsilon,k)  \big) }^{B(0,\theta^k)}     \bigg| ^2 dx \bigg)^{1/2}  \\ 
&\leq  \frac{\theta^k }{\varepsilon_0}\int^{\theta^k}_0\frac{\omega_\mathbf{A}(t,B(0,2))+C_1\omega_h(t,B(0,2))}{t}dt.
\end{split}
\end{equation*}
By \eqref{eq5151mon} and \eqref{eq5232tue} with $\theta\in(0,1/2)$, we observe that
\begin{align*}
\abs{\mathbf{B}(\varepsilon,k)}&\leq C\big(1+\sum^{k-1}_{i=0}\int^{\theta^i}_0\frac{\omega_\mathbf{A}(t,B(0,2))+C_1\omega_f(t,B(0,2))}{t}dt\big)\\
&\leq C\big\{ 1+C(1+\varepsilon_0)k \big\}\leq C\bigg\{ 1+C(1+\varepsilon_0)\big(1+\frac{\log(\varepsilon_0/\varepsilon)}{\log(1/\theta)} \big) \bigg\}\\
&\leq C(1+\frac{1}{\varepsilon}).
\end{align*}
Since $\chi^\beta_j$ is bounded in $L^2([0,1)^n;\bR^{n\times m^2})$ and also have local $L^\infty$ estimates with Dini mean oscillation of $\mathbf{A}$ (See, \cite[Theorem 1.5.]{DK17}), by \eqref{eq5232tue}, we have
\[
\fint_{B(0,\theta^k)}\abs{I^\beta_j(x)+\varepsilon \chi^\beta_j(x/\varepsilon)}^2 dx\leq C(\theta^k+\varepsilon)^2\leq C\varepsilon^2
\]
By combining all of them, we have
\begin{align}
&\bigg(\fint_{B(0,\theta^k)}\abs{v_\varepsilon(x)-\overline{v_\varepsilon}^{B(0,\theta^k)}}^2 dx\bigg)^{1/2}  \nonumber\\
&\leq  C\Bigg\{\frac{\varepsilon}{\varepsilon^2_0} \int^{\varepsilon/\varepsilon_0}_0\frac{\omega_\mathbf{A}(t,B(0,2))+C_1\omega_h(t,B(0,2))}{t}dt+1+\varepsilon\Bigg\}  \nonumber \\
&\leq C(1+\varepsilon). \label{eq5233tue}
\end{align}

For $x\in B(0,\theta/\varepsilon_0)$ with \eqref{eq5231tue}, let
\[
w_\varepsilon(x)=\frac{v_\varepsilon(\varepsilon x)-\overline{v_\varepsilon}^{B(0,\varepsilon\theta/\varepsilon_0)}}{1+\varepsilon}\quad\text{and}\quad \tilde{h} =\frac{\varepsilon}{1+\varepsilon}(h(\varepsilon x)-h(0)).
\]
It is easy to check that
\[
\cL_1 w_\varepsilon=\di \tilde{h} \;\text{in}\;B(0,\theta/\varepsilon_0),
\]
and 
\[
\int^1_0\frac{\omega_{\tilde{h} }(t,B(0,2))}{t}dt= \frac{\varepsilon}{1+\varepsilon}\int^1_0\frac{\omega_{h}(\varepsilon t,B(0,2))}{t}dt\leq\int_0^\varepsilon\frac{\omega_{h}(  t,B(0,2))}{t}dt<+\infty.
\]
By \eqref{eq5233tue} and \eqref{eq5232tue}, we have
\begin{align*}
\bigg(\fint_{B(0, \theta/\varepsilon_0)}\abs{w_\varepsilon(x)}^2 dx\bigg)^{1/2}&=\frac{1}{1+\varepsilon}\bigg(\fint_{B(0,\varepsilon \theta/\varepsilon_0)}\abs{v_\varepsilon(x)-\overline{v_\varepsilon}^{B(0,\varepsilon\theta/\varepsilon_0)}}^2 dx\bigg)^{1/2}\\
&\leq \frac{\theta^{kn}}{(1+\varepsilon)(\varepsilon\theta/\varepsilon_0)^n}\bigg(\fint_{B(0,\theta^k)}\abs{v_\varepsilon(x)-\overline{v_\varepsilon}^{B(0,\theta^k)}}^2 dx\bigg)^{1/2}\\
&\leq C.
\end{align*}

From $C^1$ estimates with Dini mean oscillation of $\mathbf{A}$ (See, \cite[Theorem 1.5.]{DK17}) and  Caccioppoli's inequality, we have 
\begin{align*}
\norm{\nabla w_\varepsilon}_{L^\infty(B(0,\theta/2))}&\leq C\fint_{B(0,\theta)}\abs{\nabla w_\varepsilon}+C\int^\theta_0\frac{\omega_{\tilde{h} }(t,B(0,\theta))}{t}dt\\
&\leq C\bigg( \fint_{B(0,\theta )}\abs{\nabla w_\varepsilon}^2\bigg)^{1/2}+C\\
&\leq \frac{C}{\theta/\varepsilon_0-\theta }\bigg( \fint_{B(0,\theta/\varepsilon_0)}\abs{ w_\varepsilon}^2\bigg)^{1/2}+\norm{\tilde{h} }_{L^\infty(B(0,\theta/\varepsilon_0))}+C\\
&\leq \frac{\varepsilon  C_1 }{1+\varepsilon}\int^1_0\frac{\omega_{h}(t,B(0,\varepsilon\theta/\varepsilon_0)}{t}dt+C\leq C.
\end{align*}
Thus, we conclude that
\[
\norm{\nabla u_\varepsilon}_{L^\infty(B(0,\varepsilon\theta/2))}\leq C(1+\varepsilon)J\leq C\Bigg\{  \bigg(\fint_{B(0,2)}\abs{  u_\varepsilon}^2 \bigg)^{1/2} +\int^1_0\frac{\omega_f(t,B(0,2))}{t}dt  \Bigg\},
\]
which, by translation, implies \eqref{lip_esti}.
\end{proof}

Finally, using scaling argument, we can prove the interior Lipschitz estimates (Theorem \ref{thm00}) from Lemma \ref{lemma03} and omit the detail of the proof.

\section{Proof of boundary Lipschitz estimates} \label{section4}
In this section, we introduce the Dirichlet correctors and their properties and prove the boundary Lipschitz estimate for \eqref{main_eq}.

\subsection{Green's function and Dirichlet correctors}
According to the results of \cite{HK07,TKB13}, the following results can be proved from Lemma \ref{lemmaholder} which is observed interior and boundary H\"{o}lder estimate. The details are omitted and can be seen in \cite{ShenBook}.

\begin{lemma}  \label{green}
Let $\Omega$ be a bounded $C^{1}$ domain in $\bR^n$ .
Assume the coefficients $\mathbf{A}=(a^{ij})$ of the operator $\cL_\varepsilon$ satisfy the  condition \eqref{ellipticity}, \eqref{perodicity} and are of vanishing mean oscillation in $\bR^n$ ($\textsf{VMO}_{\mathbf{A}}$). Then, there exists a unique Green's function (matrix) $G_\varepsilon(x,y)=(G^{ij}_\varepsilon(x,y))^m_{i,j=1}$ ($x\neq y$) which is continuous in $\{ (x,y)\in \Omega\times\Omega~:~x\neq y\}$ and satisfies the following estimates:
\begin{align}
\begin{split}\label{eq12051thu}
\abs{G_\varepsilon(x,y)} \leq C\abs{x-y}^{2-n}\quad&\hbox{if $\forall x,~y\in\Omega,~x\neq y,~n\geq3$,}  \\
\abs{G_\varepsilon(x,y)} \leq C(1+\log(\operatorname{diam}\Omega/\abs{x-y}))\quad&\hbox{if $\forall x,~y\in\Omega,~x\neq y,~n=2$,}  
\end{split}
 \\
\abs{G_\varepsilon(x,y)-G_\varepsilon(z,y)} \leq \frac{C\abs{x-z}^{\gamma_1}}{\abs{x-y}^{n-2+\gamma_1}}\quad&\hbox{if $\abs{x-z}<\frac{1}{4}\abs{x-y}$,}\label{eq12053thu}  
\end{align}
where $0<\gamma_1,~\gamma_2<1$ and $d_x:=\dist (x,\partial \Omega)$. The constants $C$ depends on $n$, $m$, $\lambda$, $\Lambda$, $\Omega$, $\textsf{VMO}_\mathbf{A}$ and , if necessary, $\gamma_1$, $\gamma_2$. Moreover, for $F\in L^p(\Omega;\bR^m)$ with $p>n/2$,
\begin{align}\label{eq08121mon}
u^\alpha_\varepsilon(x)=\int_{\Omega}G^{\alpha\beta}_\varepsilon(x,y)F^\beta(y)dy,
\end{align}
satisfies $\cL_\varepsilon u_\varepsilon=F$ in $\Omega$ and $u_\varepsilon=0$ on $\partial \Omega$.
\end{lemma}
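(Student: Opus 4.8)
The plan is to construct the Green's matrix by the standard approximation scheme originating in the work of Grüter–Widman and adapted to elliptic systems by Hofmann–Kim and Taylor–Kim–Brown, and to derive the stated bounds as consequences of the uniform interior and boundary Hölder estimate of Lemma \ref{lemmaholder}. First I would fix $y\in\Omega$ and, for small $\rho>0$, let $G_\varepsilon^\rho(\cdot,y)$ be the weak solution of $\cL_\varepsilon G_\varepsilon^\rho(\cdot,y)=\fint_{B(y,\rho)}(\cdot)$ with zero Dirichlet data, i.e. the averaged fundamental solution. The goal is then to show that $\{G_\varepsilon^\rho(\cdot,y)\}_\rho$ is uniformly bounded in the relevant Lorentz/weak-$L^{n/(n-2)}$ norm and equicontinuous away from the pole, so that a subsequence converges to the desired $G_\varepsilon(\cdot,y)$. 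The pointwise size bound \eqref{eq12051thu} comes from testing the defining equation against truncations of $G_\varepsilon^\rho$ à la De Giorgi, using ellipticity \eqref{ellipticity} and Sobolev embedding to get $\abs{G_\varepsilon^\rho(x,y)}\le C\abs{x-y}^{2-n}$ for $n\ge3$ and the logarithmic analogue for $n=2$; here only boundedness and ellipticity of $\mathbf A$ are used, not the VMO hypothesis.

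Next I would establish the Hölder bound \eqref{eq12053thu}. Away from the pole, $G_\varepsilon^\rho(\cdot,y)$ is a weak solution of the homogeneous system $\cL_\varepsilon v=0$ in $B(x,\frac14\abs{x-y})\cap\Omega$; applying the uniform interior/boundary Hölder estimate of Lemma \ref{lemmaholder} (with $f\equiv0$ and $g$ the trace, which is either zero on $\partial\Omega$ or smooth), we get oscillation control $\abs{v(x)-v(z)}\le C(\abs{x-z}/r)^{\gamma_1}\sup_{B(x,r)\cap\Omega}\abs{v}$ with $r\sim\abs{x-y}$; combined with the size bound this yields \eqref{eq12053thu} with a constant $C=C(n,m,\lambda,\Lambda,\Omega,\textsf{VMO}_{\mathbf A},\gamma_1)$. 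Uniqueness follows by a standard duality/energy argument: if two Green's matrices existed their difference would be a solution with zero data that, by the size estimate, lies in the right energy space, hence vanishes. Continuity on $\{x\neq y\}$ is then immediate from \eqref{eq12053thu} together with the symmetric estimate in the $y$ variable (using that $G_\varepsilon(x,y)$ for $\cL_\varepsilon$ equals $G_\varepsilon^*(y,x)$ for the adjoint operator $\cL_\varepsilon^*$, which satisfies the same hypotheses since \eqref{ellipticity} and \eqref{perodicity} are symmetric in form).

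For the representation formula \eqref{eq08121mon}, given $F\in L^p(\Omega;\bR^m)$ with $p>n/2$, I would define $u_\varepsilon$ by the stated integral and check, via \eqref{eq12051thu} and Hölder's inequality, that the integral converges absolutely and $u_\varepsilon\in L^\infty$; then test the defining approximate equations against a fixed test function, pass $\rho\to0$ using the convergence $G_\varepsilon^\rho\to G_\varepsilon$, and identify $u_\varepsilon$ as the weak solution of $\cL_\varepsilon u_\varepsilon=F$ with $u_\varepsilon=0$ on $\partial\Omega$; uniqueness of the latter (which holds since $p>n/2$ puts $F$ in the dual of the energy space via Sobolev) forces the two to coincide. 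The main obstacle I anticipate is not any single estimate but the bookkeeping of the approximation argument: one must obtain the energy and pointwise bounds for $G_\varepsilon^\rho$ with constants \emph{independent of $\rho$ and of $\varepsilon$}, which is exactly where the uniform (in $\varepsilon$) character of Lemma \ref{lemmaholder} is essential, and then argue that the limit is genuinely a Green's matrix rather than merely a weak limit with a priori bounds. Since this construction is by now classical and the uniform Hölder estimate is the only structural input beyond ellipticity, the details are omitted, as indicated, with references to \cite{HK07,TKB13,ShenBook}.
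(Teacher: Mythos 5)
Your overall architecture --- the Gr\"uter--Widman approximation scheme in the form of Hofmann--Kim and Taylor--Kim--Brown, with the uniform H\"older estimate of Lemma \ref{lemmaholder} as the structural input --- is exactly the route the paper takes (the paper simply invokes \cite{HK07,TKB13} and \cite{ShenBook} and omits the details). However, one step of your sketch is wrong as written: you claim the size bound \eqref{eq12051thu} follows from testing against De~Giorgi truncations of $G^\rho_\varepsilon$, ``using only boundedness and ellipticity of $\mathbf A$, not the VMO hypothesis.'' That is correct for a single equation, where De~Giorgi--Nash--Moser gives local boundedness of solutions from \eqref{ellipticity} alone, but the lemma is about \emph{systems}: for $m\ge 2$ with merely bounded measurable coefficients, local boundedness of weak solutions fails in general (De~Giorgi-type counterexamples), and a Green matrix satisfying the pointwise bound $\abs{G_\varepsilon(x,y)}\le C\abs{x-y}^{2-n}$ need not exist. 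What truncation/energy arguments give from ellipticity alone is only the weak-$L^{n/(n-2)}$ bound on $G^\rho_\varepsilon(\cdot,y)$ (and the weak-$L^{n/(n-1)}$ bound on its gradient), uniformly in $\rho$. The whole point of \cite{HK07,TKB13} is that the pointwise estimate is \emph{derived from} the hypothesis that weak solutions of $\cL_\varepsilon u=0$ and of the adjoint system satisfy interior (and, for the global bound in a bounded $C^1$ domain, boundary) local boundedness/H\"older estimates --- which here is precisely Lemma \ref{lemmaholder}, i.e.\ the VMO hypothesis plus periodicity.

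There is a second, related issue of uniformity in $\varepsilon$: even for a fixed $\varepsilon$ one could get the local estimates from the VMO property of $\mathbf A(\cdot/\varepsilon)$, but its mean-oscillation modulus is $\omega_{\mathbf A}(\cdot/\varepsilon)$ and deteriorates as $\varepsilon\to 0$, so the resulting Green-function constants would not be uniform; the independence of $C$ from $\varepsilon$ in \eqref{eq12051thu}--\eqref{eq12053thu} rests on the homogenization-uniform (Avellaneda--Lin type) character of Lemma \ref{lemmaholder}. Your closing paragraph does acknowledge this, which is inconsistent with the earlier claim. The fix is simply to route the size bound, not just the oscillation bound \eqref{eq12053thu}, through Lemma \ref{lemmaholder}, applied to both $\cL_\varepsilon$ and its adjoint (admissible here since $\mathbf A$ is symmetric and periodic, and one uses $G^{*ij}_\varepsilon(x,y)=G^{ji}_\varepsilon(y,x)$ for the estimates in the $y$-variable); with that correction, the remainder of your outline (uniqueness by duality, continuity away from the diagonal, and the representation formula \eqref{eq08121mon} for $p>n/2$) is sound and agrees with the argument the paper delegates to \cite{HK07,TKB13,ShenBook}.
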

Next, we introduce the Dirichlet corrector $\Phi_\varepsilon$, which replaces the (first-order) corrector $\chi$ near the boundary and vanishes at the (Dirichlet) boundary. In particular, it plays an essential role in studying boundary Lipschitz estimates for the Dirichlet problem.

 We define the Dirichlet corrector $\Phi_\varepsilon=(\Phi^\beta_{\varepsilon,i})=(\Phi^{\alpha\beta}_{\varepsilon,i})$ with $1\leq i \leq n$, $1\leq \alpha, \beta\leq m$, for the operator $\cL_\varepsilon$ in $\Omega$ by 
 \begin{equation} \label{eq7191wed}
 \cL_\varepsilon  \Phi^\beta_{\varepsilon,j}=0 \;\text{ in }\;\Omega,\quad \Phi^\beta_{\varepsilon,j}=I^\beta_j\;\text{ on }\;\partial \Omega.
 \end{equation} 
Here, the function $\Phi-I$ plays a role similar to $\varepsilon \chi(x/\varepsilon)$ for interior Lipschitz estimates and satisfies 
\begin{equation*}  
\cL_\varepsilon \big\{ \Phi^\beta_{\varepsilon,j} -I^\beta_j \big\} =\cL_\varepsilon \big\{ \varepsilon \chi^\beta_j (x/\varepsilon) \big\}.
\end{equation*}

The following lemma is about one of the various properties of the Dirichlet corrector, and other properties can be found in \cite{AL87,DLZ21,ShenBook}.

\begin{lemma} \label{bdry_corrector}
Let $\Omega$ be a bounded $C^{1}$ domain in $\bR^n$ .
Assume the coefficients $\mathbf{A}=(a^{ij})$ of the operator $\cL_\varepsilon$ satisfy the  condition \eqref{ellipticity}, \eqref{perodicity} and are of Dini mean oscillation in $\bR^n$. Let $\Phi_\varepsilon \in H^1(\Omega;\bR^{n\times m^2})$ be the solution of \eqref{eq7191wed}. Then, for any $\tau\in(0,1)$, there exists $C$ depending only one $n$, $m$, $\lambda$, $\Lambda$, $\Omega$, $\omega_\mathbf{A}$ and $\tau$ such that
\begin{equation} \label{eq5252fri}
\abs{\Phi^\beta_{\varepsilon,j}-I^\beta_j }\leq C \varepsilon^{1-\tau} d_x^\tau, \quad\hbox{$x\in \Omega$,}
\end{equation}
where $1\leq j\leq n$, $1\leq \beta\leq m$, and $d_x=\dist(x,\partial\Omega)$. In fact, we also have that
\begin{equation*} 
\norm{\Phi_\varepsilon}_{L^\infty(\Omega)}\leq C_3
\end{equation*}  
where $C_3=C_3(n, m, \lambda, \Lambda, \Omega,\omega_\mathbf{A})$.
\end{lemma}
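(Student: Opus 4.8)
The plan is to prove the pointwise bound \eqref{eq5252fri} by an interpolation argument between two known facts: a global $L^\infty$ bound on $\Phi_\varepsilon-I$, and a vanishing boundary condition. First, I would establish the global bound $\norm{\Phi_\varepsilon}_{L^\infty(\Omega)}\le C_3$. Writing $\psi^\beta_{\varepsilon,j}:=\Phi^\beta_{\varepsilon,j}-I^\beta_j$, the function $\psi^\beta_{\varepsilon,j}$ solves $\cL_\varepsilon\psi^\beta_{\varepsilon,j}=D_i\mathbf{A}^{\alpha\beta}_{ij}(x/\varepsilon)=\di f$ in $\Omega$ with $f=\mathbf{A}(x/\varepsilon)e_j^\beta\in L^\infty$ and $\psi^\beta_{\varepsilon,j}=0$ on $\partial\Omega$. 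Since the coefficients are of Dini mean oscillation (hence \textsf{VMO}), Lemma \ref{lemmaholder} with $g=0$ gives $\norm{\psi^\beta_{\varepsilon,j}}_{C^\mu(\Omega)}\le C\norm{f}_{L^p(\Omega)}\le C\Lambda|\Omega|^{1/p}$, uniformly in $\varepsilon$; combined with $\norm{I^\beta_j}_{L^\infty(\Omega)}\le\diam\Omega$ this yields $\norm{\Phi_\varepsilon}_{L^\infty(\Omega)}\le C_3$ with $C_3$ depending only on the stated parameters.

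Next I would upgrade this to the boundary-decay estimate \eqref{eq5252fri}. Fix $x_0\in\Omega$ and set $\delta:=d_{x_0}=\dist(x_0,\partial\Omega)$; we may assume $\delta\le c_0$ for a small dimensional constant, since for $\delta$ bounded below \eqref{eq5252fri} follows from the $L^\infty$ bound just proved. Pick a boundary point $z_0\in\partial\Omega$ with $|x_0-z_0|=\delta$. On $\Omega(z_0,2\delta)$ the function $\psi:=\psi^\beta_{\varepsilon,j}$ solves $\cL_\varepsilon\psi=\di f$ with $\psi=0$ on $\partial\Omega\cap B(z_0,2\delta)$. The boundary H\"older estimate of Lemma \ref{lemmaholder} (rescaled to the ball $B(z_0,2\delta)$, using that $\Omega$ is $C^1$) gives, for $x\in\Omega(z_0,\delta)$,
\[
|\psi(x)|=|\psi(x)-\psi(z_0)|\le C\Big(\frac{|x-z_0|}{\delta}\Big)^{\mu}\Big\{\norm{\psi}_{L^\infty(\Omega(z_0,2\delta))}+\delta\,\norm{f}_{L^\infty}\Big\}\le C\Big(\frac{d_x}{\delta}\Big)^{\mu}\big(C_3+C\delta\big).
\]
Taking $\mu\ge\tau$ would already give the desired shape $|\psi(x_0)|\le C(d_{x_0}/\delta)^{\tau}\cdot C_3$; but $\mu=1-n/p$ from Lemma \ref{lemmaholder} need not exceed a given $\tau\in(0,1)$, so one must be a little more careful. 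The standard fix, which I would carry out, is an iteration on dyadic annuli: applying the boundary H\"older estimate on the chain of balls $B(z_0,2^{-k}\diam\Omega)$, $k=0,1,2,\dots$, with the decay factor $(1/2)^\mu<1$ at each step, one sums a geometric series and obtains $|\psi(x_0)|\le C\,\varepsilon^{1-\tau}d_{x_0}^{\tau}$ after also exploiting the homogenization scale $\varepsilon$ via the interior $C^1$ estimate (Lemma \ref{lemma03}, or \cite[Theorem 1.5]{DK17} for $\cL_1$) to control $|\nabla\psi|$ on the ball $B(x_0,\delta/2)$ of interior type by $\varepsilon^{-1}$ times a power, giving $|\psi|\lesssim\varepsilon$ at scale $\delta\sim\varepsilon$ and then propagating the mixed bound $\varepsilon^{1-\tau}d_x^\tau$ outward. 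In fact the cleanest route is: for $d_{x_0}\le\varepsilon$ use interior Lipschitz on $B(x_0,d_{x_0})$ together with $\psi=0$ on the nearest boundary point to get $|\psi(x_0)|\le C d_{x_0}\cdot\varepsilon^{-1}\cdot\varepsilon=Cd_{x_0}\le C\varepsilon^{1-\tau}d_{x_0}^\tau$; for $d_{x_0}>\varepsilon$ interpolate between the $L^\infty$ bound $C_3$ at scale $\diam\Omega$ and $|\psi|\le C\varepsilon$ at scale $\varepsilon$ using the boundary H\"older decay, which yields precisely $C\varepsilon^{1-\tau}d_{x_0}^\tau$ once $\tau<\mu$ is arranged by first fixing $p$ large (and for $\tau$ close to $1$ one absorbs the loss into the constant, as $d_{x_0}\le\diam\Omega$).

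The main obstacle is the mismatch between the exponent $\mu=1-n/p$ furnished by Lemma \ref{lemmaholder} and an arbitrary prescribed $\tau\in(0,1)$: getting the sharp product form $\varepsilon^{1-\tau}d_x^\tau$ (rather than a bound with a fixed, possibly small H\"older exponent) requires combining the boundary regularity with the \emph{interior} homogenization estimate at the scale $\varepsilon$, and bookkeeping the two regimes $d_x\lesssim\varepsilon$ and $d_x\gtrsim\varepsilon$ separately before patching them. Everything else — the reduction to $\psi=\Phi_\varepsilon-I$, the application of the VMO H\"older theory, and the dyadic summation — is routine. Finally, the bound $\norm{\Phi_\varepsilon}_{L^\infty(\Omega)}\le C_3$ is exactly the content of the first paragraph, completing the proof.
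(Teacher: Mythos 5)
Your first paragraph (the uniform bound $\norm{\Phi_\varepsilon}_{L^\infty(\Omega)}\le C_3$ via $\psi=\Phi_\varepsilon-I$, $\cL_\varepsilon\psi=\di f$ with $f$ bounded, and Lemma \ref{lemmaholder}) is fine and is a legitimate alternative to the paper's derivation. The main estimate \eqref{eq5252fri}, however, has a genuine gap: every route you propose to the crucial factor $\varepsilon^{1-\tau}$ relies on an $\varepsilon$-smallness of $\psi$ near the boundary that your tools do not provide. Concretely, (i) for $d_x\le\varepsilon$ you claim $|\psi(x)|\le Cd_x$ "from interior Lipschitz on $B(x,d_x)$ together with $\psi=0$ at the nearest boundary point"; but the interior estimate (Theorem \ref{thm00}/Lemma \ref{lemma03}) only gives $|\nabla\psi(x)|\le C/d_x$ on that ball, and to convert vanishing boundary data into $|\psi(x)|\lesssim d_x$ you need a gradient bound \emph{up to the boundary}, i.e.\ a boundary Lipschitz estimate or an Agmon--Miranda maximum principle. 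Neither is available here: the lemma assumes only a $C^1$ domain (boundary Lipschitz/$C^1$ estimates in \cite{DEK18} need $C^{1,\mathrm{Dini}}$), and in this paper the boundary Lipschitz theory is proved \emph{later}, using this very lemma, so invoking it would be circular. (ii) For $d_x>\varepsilon$ your interpolation takes as seed "$|\psi|\le C\varepsilon$ at scale $\varepsilon$", which is essentially the Avellaneda--Lin estimate $\norm{\Phi_\varepsilon-I}_{L^\infty}\le C\varepsilon$ localized at the boundary; it does not follow from interior estimates plus zero boundary data (an interior gradient bound of size $\varepsilon^{-1}$ on an $\varepsilon$-ball only controls the oscillation by $O(1)$), and indeed the whole point of the weaker bound $\varepsilon^{1-\tau}d_x^\tau$ is that the $O(\varepsilon)$ bound is not known in this low-regularity setting. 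The boundary H\"older iteration on dyadic annuli by itself yields only $|\psi(x)|\le Cd_x^\mu$, with no gain in $\varepsilon$ at all, which is strictly weaker than \eqref{eq5252fri} (e.g.\ at $d_x\sim\varepsilon$ it gives $\varepsilon^\mu\gg\varepsilon$).

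The paper's proof is potential-theoretic and avoids these issues: since $\cL_\varepsilon(\Phi^\beta_{\varepsilon,j}-I^\beta_j)=\cL_\varepsilon(\varepsilon\chi^\beta_j(\cdot/\varepsilon))=-\di\big(\mathbf{A}(\cdot/\varepsilon)\nabla\chi(\cdot/\varepsilon)\big)$ with zero boundary data, one represents $v_\varepsilon=\Phi_{\varepsilon,j}^\beta-I^\beta_j$ through the Green's matrix as an integral against $\nabla_yG_\varepsilon(x,y)$, uses $\norm{\nabla\chi}_{L^\infty}\le C$ (the $C^1$ estimate of \cite{DK17} applied to the cell problem, \eqref{eq08123mon}), and then the weighted gradient estimate $\int_\Omega|\nabla_yG_\varepsilon(x,y)|\,d_y^{\tau-1}\,dy\le Cd_x^\tau$ from \cite[Theorem 5.4.2]{ShenBook} (valid for VMO coefficients in $C^1$ domains, hence uniform in $\varepsilon$), splitting the integral at $d_y=\varepsilon$; the boundary layer $\{d_y\le\varepsilon\}$ produces the factor $\varepsilon^{1-\tau}d_x^\tau$. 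This Green's-function input is exactly the ingredient your outline is missing, so as written the proposal does not establish \eqref{eq5252fri}.
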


\begin{proof}
Let $v_\varepsilon=(v^\beta_{\varepsilon,j})=\Phi^\beta_{\varepsilon,j}-I^\beta_j$, where $1\leq j\leq n$, $1\leq \beta\leq m$ and recall that $v_\varepsilon$ is a solution of $\cL_\varepsilon v_\varepsilon= -\di (\mathbf{A}(x/\varepsilon)\nabla \chi(x/\varepsilon))$ in $\Omega$ and $v_\varepsilon=0$ on $\partial \Omega$. From \eqref{corrector}, the $H^1$ estimate, and the $C^1$ estimates with Dini mean oscillation of $\mathbf{A}$  (See, \cite[Theorem 1.5.]{DK17}), we have that
\begin{align}\label{eq08123mon}
\norm{\nabla \chi}_{L^\infty([0,1)^n)}\leq C,
\end{align}
where $C=C(n,m,\lambda,\Lambda,\Omega,\omega_\mathbf{A})$. To show estimate \eqref{eq5252fri}, note that, from \cite[Theorem 5.4.2.]{ShenBook},
\begin{equation}\label{eq08122mon}
\int_\Omega\abs{\nabla_yG_\varepsilon (x,y)}d_y^{\tau -1}dy \leq Cd_x^\tau,
\end{equation}
for any $\tau\in (0,1)$, where $C=C(n,m,\lambda,\Lambda,\Omega,\omega_\mathbf{A})$. It follows from \eqref{eq08121mon}, \eqref{eq08123mon}, and \eqref{eq08122mon} that, for $x\in \Omega$,
\begin{align*}
\abs{v_\varepsilon(x)}&\leq C \int_\Omega \abs{\nabla_y G_\varepsilon(x,y)} dy\\
&\leq   C \int_\Omega \abs{\nabla_y G_\varepsilon(x,y)} [d_y\leq \varepsilon] + C \int_\Omega \abs{\nabla_y G_\varepsilon(x,y)} [d_y> \varepsilon]\\
&\leq C  \varepsilon^{1-\tau} d_x^\tau
\end{align*}
where we used Iverson braket $[\cdot]$. This implies the first estimate in the lemma. Similarly, the second estimate of the lemma can be obtained.
\end{proof}

\subsection{Compactness method via Dirichlet correctors}
In this subsection, we prove the boundary Lipschitz estimate with $C^{1,Dini}$ boundary data term in $C^{1,Dini}$ domains using by compactness method.

Recall that $\Omega$ is a $C^{1,Dini}$ domain if for each $x_0\in \partial \Omega$, there exists $R>0$ (independent of $x_0$) and a $C^{1,Dini}$ function $\psi:\bR^{n-1}\rightarrow \bR$ such that in a new coordinate system, $x_0$ becomes the origin,
\[
\Omega(0,R):=\Omega_\psi(0,R)=\{x\in B(0,R):x_n> \psi(x_1,\ldots,x_n)\}\;\;\text{and}\;\;\psi(0)=0.
\]

Note that $\Omega$ also satisfies the following condition: For any $x\in\overline{\Omega}$, 
\begin{equation} \label{condition_A}
|\Omega(x,R)|\geq C_\Omega |B(x,R)|,\;0<C_\Omega\leq 1\;\;\text{and}\;0<R<\operatorname{diam}\Omega.
\end{equation}

\begin{lemma}[\emph{One-step improvement}]  \label{lemma04}
Let the Dirichlet corrector $\Phi_{\varepsilon,j}^\beta$, denoted as $\Phi_{\varepsilon,j}^\beta(\cdot,\Omega(0,2),\mathbf{A})$, be defined   by \eqref{eq7191wed}. 
There exist constants $\varepsilon_0\in(0,1/4)$, $\theta\in(0,1/4)$, 
and $C_4>0$, depending only $n$, $m$, $\lambda$, $\Lambda$, $\omega_\mathbf{A}$, and $\Omega$ such that
if $u_\varepsilon$, $g$ satisfy
\begin{align*}
\bigg(\fint_{\Omega(0,2)}\abs{u_\varepsilon}^2 \bigg)^{1/2}\leq 1,\quad 
  \norm{\nabla g}_{L^\infty(\partial\Omega(0,2))}   \leq \varepsilon_0,\quad g(0)= \abs{\nabla g(0)}=0,
\end{align*}
with \eqref{ellipticity}, \eqref{perodicity}, and
\[ 
\cL_\varepsilon u_\varepsilon= 0 \;\text{ in }\;\Omega(0,2),\quad u_\varepsilon=g\;\text{ on }\;\partial \Omega(0,2),
\] 
then, for any $0<\varepsilon\leq\varepsilon_0$, 
\begin{equation}  \label{eq8021wed}
\bigg(\fint_{\Omega(0,\theta)} \bigg| u_\varepsilon(x)- \Phi^\beta_{\varepsilon,j}     \mathbf{B}^\beta_j(\varepsilon) \bigg| ^2 dx \bigg)^{1/2}   
\leq \theta\omega_\mathbf{A}(\theta,\Omega(0,2)).
\end{equation}
for some $\mathbf{B}(\varepsilon)=\big(\mathbf{B}^\beta_j(\varepsilon) \big)\in \bR^{n\times m}$ with the property that
\begin{align*}
\abs{\mathbf{B}(\varepsilon)}\leq \frac{C_4}{\theta}, \quad  \nabla \Phi^\beta_{\varepsilon,j}(0)\mathbf{B}^\beta_j(\varepsilon)=0.
\end{align*}
\end{lemma}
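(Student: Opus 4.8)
The plan is to mimic the interior one-step improvement (Lemma~\ref{lemma01}), but with the Dirichlet corrector $\Phi_\varepsilon$ playing the role of $I^\beta_j(x)+\varepsilon\chi^\beta_j(x/\varepsilon)$, and argue by contradiction. Suppose \eqref{eq8021wed} fails for every choice of $\varepsilon_0,\theta,C_4$. Then we obtain sequences $\varepsilon_l\to0$, coefficient matrices $\mathbf{A}_l\in\mathsf{DMO}(\bR^n)$ satisfying \eqref{ellipticity}--\eqref{perodicity} with a uniform modulus $\omega_{\mathbf{A}}$, domains $\Omega_l$ that are $C^{1,Dini}$ with a uniform modulus, boundary data $g_l$ with $\norm{\nabla g_l}_{L^\infty(\partial\Omega_l(0,2))}\le\varepsilon_l$ and $g_l(0)=|\nabla g_l(0)|=0$, and solutions $u_{\varepsilon_l}$ of $\cL_{\mathbf{A}_l,\varepsilon_l}u_{\varepsilon_l}=0$ in $\Omega_l(0,2)$ with $u_{\varepsilon_l}=g_l$ on the boundary and $\fint_{\Omega_l(0,2)}|u_{\varepsilon_l}|^2\le1$, yet the left side of \eqref{eq8021wed} exceeds $\theta\omega_{\mathbf{A}}(\theta,\Omega_l(0,2))$ for \emph{every} admissible $\mathbf{B}$ satisfying the two side constraints. (If the $\Omega_l$ genuinely vary we flatten each boundary via the defining $C^{1,Dini}$ graph $\psi_l$; since the $\psi_l$ have a uniform modulus of continuity for $\nabla\psi_l$, a subsequence converges in $C^1_{\loc}$ to a limiting $C^{1,Dini}$ domain $\Omega_0$ — this bookkeeping is standard and I would only sketch it.)

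Next I would extract the compactness limits. By the boundary Caccioppoli inequality (using the boundary data bound $\norm{\nabla g_l}_{L^\infty}\le\varepsilon_l\le1$ and $g_l(0)=0$, so $\norm{g_l}_{L^\infty(\partial\Omega_l(0,2))}$ is controlled), $\{u_{\varepsilon_l}\}$ is bounded in $W^{1,2}(\Omega_l(0,1))$; passing to subsequences, $u_{\varepsilon_l}\to u_0$ strongly in $L^2$, weakly in $H^1$, $\nabla u_{\varepsilon_l}\rightharpoonup\nabla u_0$, $\hat{\mathbf{A}_l}\to\mathbf{A}^0$ for some constant elliptic matrix, and $g_l\to0$ uniformly on the boundary (since $\norm{\nabla g_l}_{L^\infty}\to0$ and $g_l(0)=0$). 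Standard homogenization then gives $\cL_{\mathbf{A}^0,0}u_0=0$ in $\Omega_0(0,1)$ with $u_0=0$ on $\partial\Omega_0\cap B(0,1)$, and $\fint_{\Omega_0(0,2)}|u_0|^2\le1$. For the corrector term I use Lemma~\ref{bdry_corrector}: $\norm{\Phi_{\varepsilon_l}}_{L^\infty}\le C_3$ uniformly, $|\Phi^\beta_{\varepsilon_l,j}-I^\beta_j|\le C\varepsilon_l^{1-\tau}d_x^\tau\to0$ locally uniformly away from nothing in fact pointwise, so $\Phi_{\varepsilon_l}\to I$ in $L^2_{\loc}$; also $\nabla\Phi^\beta_{\varepsilon_l,j}(0)$ — while not pointwise convergent — stays bounded by the $C^1$ estimate with Dini mean oscillation coefficients (\cite[Theorem~1.5]{DK17}) applied at the boundary point $0$ (using $g_l$ vanishes to first order at $0$), and a subsequence converges to some limit which forces the analogous constraint on the limit.

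Now the limiting inequality: since for each $l$ no admissible $\mathbf{B}$ works, choosing $\mathbf{B}^\beta_j=\overline{D_ju_0^\beta}$ truncated/adjusted to satisfy the constraints, I pass to the limit and obtain
\[
\bigg(\fint_{\Omega_0(0,\theta)}\bigl|u_0(x)-x_j\,\overline{D_ju_0^\beta}^{\,\Omega_0(0,\theta)}\bigr|^2\,dx\bigg)^{1/2}\ge\theta\,\omega_{\mathbf{A}}(\theta,\Omega_0(0,2)),
\]
using that $\Phi^\beta_{\varepsilon_l,j}\mathbf{B}^\beta_j\to x_j\overline{D_ju_0^\beta}$ in $L^2$ and $\omega_{\mathbf{A}}(\theta,\Omega_l(0,2))\to\omega_{\mathbf{A}}(\theta,\Omega_0(0,2))$ along the subsequence (up to shrinking, this limit is $\ge$ a fixed positive multiple — or, if $\omega_{\mathbf{A}}(\theta)=0$ for all $\theta$ the right side is $0$ and the contradiction is even easier). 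On the other hand, boundary $C^{1,\gamma}$ (indeed $C^2$ up to the flat part, or $C^{1,\gamma}$ for the $C^{1,Dini}$ domain) estimates for the constant-coefficient homogenized equation $\cL_{\mathbf{A}^0,0}u_0=0$ with zero boundary data give, using $u_0(0)=0$,
\[
\bigg(\fint_{\Omega_0(0,\theta)}\bigl|u_0(x)-x_j\,\overline{D_ju_0^\beta}^{\,\Omega_0(0,\theta)}\bigr|^2\,dx\bigg)^{1/2}\le C_2\,\theta^{1+\gamma}
\]
for a constant $C_2$ depending only on $n,m,\lambda,\Lambda,\Omega_0$. Choosing $\theta$ small so that $C(\lambda,n,m)C_2\theta^{\gamma-\beta}\le\frac12$ and simultaneously $\theta+\int_0^\theta\omega_{\mathbf{A}}(t)/t\,dt\le\varepsilon_0$ (as in \eqref{eq5231tue}) yields $\le\frac12\theta\,\omega_{\mathbf{A}}(\theta,\Omega_0(0,2))$, contradicting the previous display. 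The side constraints $|\mathbf{B}(\varepsilon)|\le C_4/\theta$ and $\nabla\Phi^\beta_{\varepsilon,j}(0)\mathbf{B}^\beta_j(\varepsilon)=0$ are arranged by taking $\mathbf{B}^\beta_j(\varepsilon)$ to be the projection of $\overline{D_ju_\varepsilon^\beta}^{\,\Omega(0,\theta)}$ onto the orthogonal complement of $\{\text{vectors killed by }\nabla\Phi_\varepsilon(0)\}$ — the Caccioppoli bound controls its size by $C/\theta$ — and noting the projection costs only an error absorbed into the $O(\varepsilon^{1-\tau})$ discrepancy $\Phi_\varepsilon-I$. The main obstacle I anticipate is the joint compactness for the \emph{varying domains} together with the varying Dirichlet correctors: one must verify that the boundary flattening maps converge in $C^1$, that the correctors $\Phi_{\varepsilon_l}$ for $(\mathbf{A}_l,\Omega_l)$ converge (in $L^2_{\loc}$, with $\nabla\Phi_{\varepsilon_l}(0)$ bounded) to the identity, and that the boundary regularity estimate for the limiting homogenized problem is uniform — all of which rely on the uniform $C^{1,Dini}$ modulus and the uniform $\omega_{\mathbf{A}}$, but require care to state cleanly.
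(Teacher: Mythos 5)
There is a genuine gap, and it sits exactly where you flag your own unease: the construction of the admissible competitor $\mathbf{B}(\varepsilon)$. Your plan is to take the raw average $\overline{D_ju_\varepsilon^\beta}^{\,\Omega(0,\theta)}$ and then ``project'' it so that $\nabla\Phi^\beta_{\varepsilon,j}(0)\mathbf{B}^\beta_j(\varepsilon)=0$, claiming the projection ``costs only an error absorbed into the $O(\varepsilon^{1-\tau})$ discrepancy $\Phi_\varepsilon-I$.'' That claim is unjustified: the distance from the mean gradient to the constraint set is governed by the tangential components of $\overline{Du_{\varepsilon_l}}$ over $\Omega(0,\theta)$ (and by how $\ker$ of the map $\mathbf{B}\mapsto\nabla\Phi_{\varepsilon_l}(0)\mathbf{B}$ sits relative to the normal direction), and for finite $\varepsilon_l$ these tangential components are not small of order $\varepsilon_l^{1-\tau}$ --- they are of the same order as the gradient itself, since $u_{\varepsilon_l}$ only satisfies $u_{\varepsilon_l}=g_l$ on the boundary with $\nabla g_l$ small in $L^\infty$, not $\nabla u_{\varepsilon_l}$ small near $\partial\Omega$. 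Consequently your limiting competitor need not be $x_j\overline{D_ju_0}$, and more importantly, the final contradiction requires the limit competitor $x_j\mathbf{B}_j(0)$ to approximate the full first-order Taylor polynomial $x_jD_ju_0(0)$ to order $\theta^2$ (or $\theta^{1+\gamma}$); an arbitrary constrained projection does not deliver this. (As a side remark, the projection you describe is onto the orthogonal complement of the vectors killed by $\nabla\Phi_\varepsilon(0)$, which is the wrong subspace for enforcing $\nabla\Phi_\varepsilon(0)\mathbf{B}=0$; you want $\mathbf{B}$ in the kernel.)

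The paper closes precisely this gap by an explicit choice that you are missing: it sets $\mathbf{B}^\beta_j(\varepsilon)=\mathbf{n}_j(0)\,\mathbf{n}_i(0)\,\overline{D_iu^\beta_\varepsilon}^{\,\Omega(0,\theta)}$, the normal--normal component of the mean gradient. This choice satisfies the constraint directly (using that $\Phi_{\varepsilon,j}=I_j$ on $\partial\Omega$, so the relevant contraction of $\nabla\Phi_\varepsilon(0)$ with $\mathbf{n}(0)$ reduces to the corresponding contraction for $I$), it obeys $|\mathbf{B}(\varepsilon)|\le C_4/\theta$ by boundary Caccioppoli, and --- crucially --- in the compactness limit it recovers $\nabla u_0(0)$ up to an error $C\theta\,\|D^2u_0\|$, because the limit $u_0$ has zero Dirichlet data on $\partial\Omega(0,1)$, so its tangential derivatives vanish at $0$ and the normal--normal projection loses nothing at leading order; this is exactly the displayed computation in the paper bounding $\bigl|x_j\fint_{\Omega(0,\theta)}\partial u_0/\partial x_j - x_j\mathbf{B}_j(0)\bigr|$ by $C\theta^2\|D^2u_0\|$. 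With that explicit competitor fixed from the start, the contradiction argument only needs the inequality to fail for this one $\mathbf{B}$, and the rest of your outline (boundary Caccioppoli, compactness of $\psi_l$, $g_l$, $\hat{\mathbf{A}_l}$, convergence $\Phi_{\varepsilon_l}\to I$ via Lemma \ref{bdry_corrector}, boundary regularity for the homogenized limit, smallness of $\theta$ and $\varepsilon_0$) does match the paper. Without the structured choice of $\mathbf{B}(\varepsilon)$, however, the step from ``fails for every admissible $\mathbf{B}$'' to a limiting lower bound against $x_j\overline{D_ju_0}$, and the subsequent $O(\theta^{1+\gamma})$ upper bound, do not go through as written.
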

\begin{proof}
Let 
\[
\mathbf{B}^\beta_j(\varepsilon)=\mathbf{B}^\beta_j(\varepsilon,\Omega)=\mathbf{n}_j(0)\mathbf{n}_i(0) \overline{D_iu^\beta_\varepsilon}^{\Omega(0,\theta)}. 
\]
where $\mathbf{n}$ denotes the unit outward normal to $\partial\Omega(0,2)$. Then, it is easy to show that $\nabla \Phi^\beta_{\varepsilon,j}(0)\mathbf{n}_j(0)=\nabla I^\beta_j(0)\mathbf{n}_j(0)=0$. Also, by the boundary Caccioppoli's inequality, 
we have $\abs{\mathbf{B}(\varepsilon)}\leq C_4/\theta$.  

Suppose that \eqref{eq8021wed} is not true. Then there exist sequence $\{\varepsilon_l\}\subset (0,1/4)$, $\{\mathbf{A}_l\}\subset \mathsf{DMO}(\bR^n)$ satisfying \eqref{ellipticity}, \eqref{perodicity},   $\{ \nabla g_l \}\subset L^\infty(\partial\Omega(0,2);\bR^{m})$,  $\{ \psi_l \}\subset C^{1,Dini}(\bR^{n-1};\bR)$ and $\{  u_{\varepsilon_l}\}\subset W^{1,2}(B(0,2);\bR^m)$ such that $\varepsilon_l\rightarrow 0$,
\begin{align}
\bigg(\fint_{\Omega_{\psi_l}(0,2)}\abs{u_{\varepsilon_l}}^2 \bigg)^{1/2}\leq 1,\quad 
\norm{\nabla g_l}_{L^\infty(\partial \Omega_{\psi_l}(0,2))} \leq \varepsilon_l,\quad g_l(0)=\abs{\nabla g_l(0)}=0,\label{eq8023wed}\\
\cL_{\mathbf{A}_l,\varepsilon_l}u_{\varepsilon_l}=0\;\text{in}\;\Omega_{\psi_l}(0,2),   \quad u_{\varepsilon_l}=g_l\;\text{ on }\;\partial \Omega_{\psi_l}(0,2),\nonumber
\end{align}
and 
\begin{equation}  \label{eq8025wed}
\bigg(\fint_{\Omega_{\psi_l}(0,\theta)} \bigg| u_{\varepsilon_l}(x)- \Phi^{\beta,l}_{\varepsilon_l,j}     \mathbf{B}^{\beta}_j(\varepsilon_l)  \bigg| ^2 dx \bigg)^{1/2}   
>\theta\omega_\mathbf{A}(\theta,\Omega(0,2)),
\end{equation}
where suitable Dirichlet corrector $\Phi_{\varepsilon_l,j}^{\beta,l}=\Phi_{\varepsilon_l,j}^{\beta,l}(\cdot,\Omega_{\psi_l}(0,2),\mathbf{A}_l)$ and the constant $\mathbf{B}^\beta_j(\varepsilon_l,\Omega_\psi)$. Observe that, by \eqref{eq8023wed}, and boundary Caccioppoli's inequality, $\{u_{\varepsilon_l}\}$ is uniformly bounded in $W^{1,2}( \Omega_{\psi_l}(0,1))$. By passing to subsequences, we see that, as $l \rightarrow \infty$,
\begin{equation} \label{eq8111fri}
\begin{split}
&\psi_l\rightarrow \psi\quad\: \text{in}\; C^1(\abs{x'}<2),\qquad  \nabla g_l\rightarrow 0 \quad\;\text{in}\; L^\infty(\partial \Omega(0,1)) ,   
\qquad  \hat{\mathbf{A}_l}\rightarrow \mathbf{A}^0,\\
&u_{\varepsilon_l} \rightharpoonup  u_0\quad\;\text{in}\; L^2(\Omega(0,2)),\qquad D_j u_{\varepsilon_l} \rightharpoonup D_j u_0\quad\;\text{in}\; L^2(\Omega(0,1)),
\end{split}
\end{equation}
and 
\begin{equation} \label{eq8112fri}
\cL_{\mathbf{A}^0,0}u_{0}=0\;\text{in}\;\Omega(0,1),   \quad u_{0}=0\;\text{ on }\;\partial \Omega(0,1)
\end{equation}
where $\hat{\mathbf{A}_l}$ denotes the homogenized coefficients for $\mathbf{A}_l$ and $\mathbf{A}^0$ is some constant matrix satisfying \eqref{ellipticity}, \eqref{perodicity}.
We now let $l\rightarrow \infty$, in \eqref{eq8025wed} and \eqref{eq8111fri} with Lemma \ref{bdry_corrector}. This leads to 
\begin{equation}\label{eq8113fri}
\bigg( \fint_{\Omega(0,2)}\abs{u_0}^2\bigg)^{1/2}\leq 1
\end{equation}
and
\begin{equation*}  
\bigg(\fint_{\Omega(0,\theta)} \bigg| u_0(x)- x_j  \mathbf{B}_j(0) \bigg| ^2 dx \bigg)^{1/2}   
\geq \theta\omega_\mathbf{A}(\theta,\Omega(0,2)).
\end{equation*}
From boundary $C^2$ estimates of $\eqref{eq8112fri}$ with $\eqref{eq8113fri}$, for any $\theta\in(0,1/4)$, we have that
\[  
\bigg(\fint_{\Omega(0,\theta)} \bigg| u_0(x)-x_j  \mathbf{B}_j(0)    \bigg| ^2 dx \bigg)^{1/2} \leq C_4 \theta^2 
\]
where $C_4=C_4(n,m,\lambda,\Lambda)$. Indeed, for $x\in\Omega(0,\theta)$ with $|\nabla u_0(0)|=0$, we have
\begin{align*}
&\bigg| x_j \fint_{\Omega(0,\theta)}\frac{\partial u_0}{\partial x_j} -x_j\mathbf{B}_j(0)\bigg|\\
&\leq \bigg| x_j \mathbf{n}_i (0) \bigg\{ \mathbf{n}_i (0) \fint_{\Omega(0,\theta)}\bigg( \frac{\partial u_0}{\partial x_j}-\frac{\partial u_0}{\partial x_j}(0)  \bigg)- \mathbf{n}_j (0) \fint_{\Omega(0,\theta)}\bigg(\frac{\partial u_0}{\partial x_j} -\frac{\partial u_0}{\partial x_i}(0) \bigg)  \bigg\} \bigg|\\
&\leq C\theta^2 \norm{D^2 u_0}_{L^\infty(\Omega(0,1))}.
\end{align*}

Next, for some  $0<\varepsilon_0<\frac{1}{4}\min\{1,1/(\omega_\mathbf{A}(1,\Omega(0,2)))^2, C_\Omega/(1+C_1)\}$,  we can choose sufficiently small $\theta$ such that 
\begin{equation*}	 
C(\lambda,n,m)C_4\theta^{1-\beta}\leq \frac{1}{2} \quad\text{and}\quad 
 \theta+\int^{\theta}_0\frac{\omega_{\mathbf{A}}(t,\Omega(0,2))}{t}dt\leq \varepsilon_0,   
\end{equation*}
where $\theta<1/4$,   
and $0<\beta<1$ with $\eqref{dec_omega}$. It then follows that 
\[  
\bigg(\fint_{\Omega(0,\theta)} \bigg| u_0(x)- x_j  \mathbf{B}_j(0)   \bigg| ^2 dx \bigg)^{1/2} \leq \frac{1}{2}\theta \omega_\mathbf{A}(\theta,\Omega(0,2)),
\]
which is in contradiction. Therefore, $\eqref{eq8021wed}$ holds for some 
\[
  0<\varepsilon_0<\frac{1}{4} \min \bigg\{ 1,\frac{1}{(\omega_\mathbf{A}(1,\Omega(0,2)))^2}, \frac{C_\Omega}{1+C_1}  \bigg\}. 
\]
\end{proof}
 
We now turn to iteration estimates with $\varpi$, which describes all given integral condition.

\begin{lemma}[\emph{Iteration}]  \label{lemma05}
Let $\varepsilon_0$ and $\theta$ be the constants given by Lemma \ref{lemma04}. 
Suppose the functions $u_\varepsilon$ and $g$ satisfy
\begin{align}	\label{eq9061wed}
\bigg(\fint_{\Omega(0,2)}\abs{u_\varepsilon}^2 \bigg)^{1/2}\leq 1,\quad C_1\int^1_0\frac{\varrho_{\nabla  g}(t,\partial\Omega(0,2))}{t}dt \leq \varepsilon_0,\quad g(0)=|\nabla g(0)|= 0,  
\end{align}
and
\[ 
\cL_\varepsilon u_\varepsilon= 0 \;\text{ in }\;\Omega(0,2),\quad u_\varepsilon=g\;\text{ on }\;\partial \Omega(0,2),
\] 
then, for some $k\geq 1$ with $0<\varepsilon<\theta^{k-1}\varepsilon_0$, there exists constants $\mathbf{B}(\varepsilon,l)=\big(\mathbf{B}^\beta_j(\varepsilon,l)\big)\in \bR^{n\times m}$ for $0\leq l\leq k-1$, such that
\begin{equation}   \label{eq8171thu} 
\abs{\mathbf{B}(\varepsilon,l)}\leq  \  \frac{\tilde{C_4}}{ \theta}  ,\qquad \nabla\Pi^{\beta,l}_{\varepsilon,j}(0)\mathbf{B}^\beta_j(\varepsilon,l)=0
\end{equation}
and
\begin{equation}  \label{eq8172thu} 
\bigg(\fint_{\Omega(0,\theta^k)} \bigg| u_\varepsilon(x)-\sum^{k-1}_{l=0}\varpi(\theta^l,\Omega(0,2))  \Pi^{\beta,l}_{\varepsilon,j}(x)\mathbf{B}_j^\beta(\varepsilon,l)  \bigg| ^2 dx \bigg)^{1/2} \leq  \frac{\theta^k }{\varepsilon_0}\varpi(\theta^k,\Omega(0,2)) 
\end{equation}
where $\tilde{C_4}= C_4/({\varepsilon_0}^{3/2}\omega_\mathbf{A}(1,\Omega(0,2)))$,
\[
 \Pi^{\beta,l}_{\varepsilon,j}(\cdot)=\theta^l\Phi^\beta_{\frac{\varepsilon}{\theta^l}.j}(\theta^{-l}\cdot,\Omega_{\psi_{\theta^l}},\mathbf{A}),\quad\psi_{\theta^l}(x')=\theta^{-l}\psi(\theta^l x')
\]
and 
\begin{align*}
\varpi(s,\Omega(0,2)) &=\omega_\mathbf{A}(s,\Omega(0,2))+2\varrho_{\nabla g}(s,\partial \Omega(0,2))\\
&\qquad+\frac{\varrho_{\nabla \psi}(s,|x'|<2)}{\varrho_{\nabla \psi}(1,|x'|<2)[\varrho_{\nabla \psi}(1,|x'|<2)\neq 0]+[\varrho_{\nabla \psi}(1,|x'|<2) = 0]}
\end{align*}
with we used Iverson bracket $[\cdot]$ and any constant $C$.
\end{lemma}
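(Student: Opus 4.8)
The plan is to induct on $k$, following the pattern of Lemma~\ref{lemma02} but with the Dirichlet correctors $\Pi^{\beta,l}_{\varepsilon,j}$ replacing $I^\beta_j+\varepsilon\chi^\beta_j(\cdot/\varepsilon)$ and with the composite modulus $\varpi$ bundling the three relevant data: the coefficient oscillation $\omega_{\mathbf A}$, the boundary datum $\varrho_{\nabla g}$, and the geometry $\varrho_{\nabla\psi}$; write $\varpi(s):=\varpi(s,\Omega(0,2))$. The base case $k=1$ is exactly Lemma~\ref{lemma04}: since $\psi_1=\psi$, $\Pi^{\beta,0}_{\varepsilon,j}=\Phi^\beta_{\varepsilon,j}(\cdot,\Omega(0,2),\mathbf A)$, and Lemma~\ref{lemma04} yields $\mathbf B(\varepsilon)$ with $\abs{\mathbf B(\varepsilon)}\le C_4/\theta$, $\nabla\Phi^\beta_{\varepsilon,j}(0)\mathbf B^\beta_j(\varepsilon)=0$, and the one-step bound with modulus $\omega_{\mathbf A}(\theta,\Omega(0,2))$; putting $\mathbf B(\varepsilon,0)=\varpi(1)^{-1}\mathbf B(\varepsilon)$ and using $\varpi(1)\ge\omega_{\mathbf A}(1,\Omega(0,2))\ge\varepsilon_0^{3/2}\omega_{\mathbf A}(1,\Omega(0,2))$ (as $\varepsilon_0<1$), together with $\omega_{\mathbf A}(\theta,\cdot)\le\varpi(\theta)$ and $\theta\le\theta/\varepsilon_0$, gives \eqref{eq8171thu}--\eqref{eq8172thu} at $k=1$; the tangential identity persists since $\mathbf B(\varepsilon,0)$ is a scalar multiple of $\mathbf B(\varepsilon)$. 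Before iterating I would record that $\varpi$ is Dini-summable and $\varrho_{\nabla g}(s,\partial\Omega(0,2))\le C_1\int_0^1\tfrac{\varrho_{\nabla g}(t)}{t}\,dt\le\varepsilon_0$ for $s\le1$ (Lemma~\ref{lemmaC1} and \eqref{eq9061wed}), and that since $\nabla\psi_{\theta^l}(x')=\nabla\psi(\theta^l x')$ the rescaled domains $\Omega_{\psi_{\theta^l}}(0,2)$ form a family of $C^{1,Dini}$ domains whose $C^{1,Dini}$ character, the constant $C_\Omega$ of \eqref{condition_A}, and the constants in boundary Caccioppoli, boundary $C^2$ estimates, and the Dirichlet-corrector estimate \eqref{eq5252fri} may all be taken uniform in $l$ — this is what makes $C_3$, $C_4$, and $\tilde C_4=C_4/(\varepsilon_0^{3/2}\omega_{\mathbf A}(1,\Omega(0,2)))$ legitimate.

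For the inductive step assume \eqref{eq8171thu}--\eqref{eq8172thu} at index $k$ and take $\varepsilon<\theta^k\varepsilon_0$. I would set, for $x\in\Omega_{\psi_{\theta^k}}(0,2)$,
\[
w_\varepsilon(x)=\frac{u_\varepsilon(\theta^k x)-\sum_{l=0}^{k-1}\varpi(\theta^l)\,\Pi^{\beta,l}_{\varepsilon,j}(\theta^k x)\,\mathbf B^\beta_j(\varepsilon,l)}{\theta^k\varepsilon_0^{-1}\,\varpi(\theta^k)}.
\]
Each $\Pi^{\beta,l}_{\varepsilon,j}=\theta^l\Phi^\beta_{\varepsilon/\theta^l,j}(\theta^{-l}\cdot,\Omega_{\psi_{\theta^l}},\mathbf A)$ is an $\cL_\varepsilon$-solution, so by scaling $\cL_{\varepsilon/\theta^k}w_\varepsilon=0$ in $\Omega_{\psi_{\theta^k}}(0,2)$, and — as in Lemma~\ref{lemma02} — the coefficient seen at unit scale is the periodic matrix $\mathbf A(\theta^k\cdot)$, whose oscillation at scale $\theta$ is at most $\omega_{\mathbf A}(\theta^{k+1},\Omega(0,2))$. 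On $\partial\Omega_{\psi_{\theta^k}}(0,2)$ one has $\Pi^{\beta,l}_{\varepsilon,j}(\theta^k x)=I^\beta_j(\theta^k x)$, so $w_\varepsilon=\tilde g$ there with $\tilde g=\big(g(\theta^k\,\cdot)-\theta^k x_j\sum_l\varpi(\theta^l)\mathbf B^\beta_j(\varepsilon,l)e^\beta\big)/(\theta^k\varepsilon_0^{-1}\varpi(\theta^k))$; since $g(0)=0$ and each $\mathbf B(\varepsilon,l)$ is proportional to $\mathbf n_j(0)$, the subtracted term is $O(\abs{x}^2)$ along $\partial\Omega$, so the tangential gradient of $\tilde g$ vanishes at $0$, and the definition of $\varpi$, the smallness \eqref{eq9061wed}, and $\abs{\mathbf B(\varepsilon,l)}\le\tilde C_4/\theta$ give $\norm{\nabla\tilde g}_{L^\infty}\le\varepsilon_0$; finally \eqref{eq8172thu} at index $k$ is, after $y=\theta^k x$, the normalization $(\fint_{\Omega_{\psi_{\theta^k}}(0,1)}\abs{w_\varepsilon}^2)^{1/2}\le1$, the passage to $\Omega(0,2)$ needed by Lemma~\ref{lemma04} being supplied by the energy estimate from $(\fint_{\Omega(0,2)}\abs{u_\varepsilon}^2)^{1/2}\le1$. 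Applying Lemma~\ref{lemma04} to $w_\varepsilon$ on $\Omega_{\psi_{\theta^k}}(0,2)$ produces $\mathbf b=(\mathbf b^\beta_j)$ with $\abs{\mathbf b}\le C_4/\theta$, $\nabla\Phi^\beta_{\varepsilon/\theta^k,j}(0)\mathbf b^\beta_j=0$, and $(\fint_{\Omega_{\psi_{\theta^k}}(0,\theta)}\abs{w_\varepsilon-\Phi^\beta_{\varepsilon/\theta^k,j}\mathbf b^\beta_j}^2)^{1/2}\le\theta\,\omega_{\mathbf A}(\theta^{k+1},\Omega(0,2))$; undoing the scaling with the identity $\theta^k\Phi^\beta_{\varepsilon/\theta^k,j}(\theta^{-k}\cdot,\Omega_{\psi_{\theta^k}},\mathbf A)=\Pi^{\beta,k}_{\varepsilon,j}$ and setting $\mathbf B^\beta_j(\varepsilon,k)=\varepsilon_0^{-1}\mathbf b^\beta_j$ yields the $(k+1)$-term expansion in \eqref{eq8172thu} with $\abs{\mathbf B(\varepsilon,k)}\le\varepsilon_0^{-1}C_4/\theta\le\tilde C_4/\theta$ (by $\varepsilon_0<\tfrac14(\omega_{\mathbf A}(1,\Omega(0,2)))^{-2}$), $\nabla\Pi^{\beta,k}_{\varepsilon,j}(0)\mathbf B^\beta_j(\varepsilon,k)=0$, and right-hand side $\tfrac{\theta^{k+1}}{\varepsilon_0}\varpi(\theta^k)\,\omega_{\mathbf A}(\theta^{k+1},\Omega(0,2))$. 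Hence \eqref{eq8172thu} at $k+1$ follows once $\varpi(\theta^k)\,\omega_{\mathbf A}(\theta^{k+1},\Omega(0,2))\le\varepsilon_0\,\varpi(\theta^{k+1})$, extracted from \eqref{dec_omega}, Lemma~\ref{lemmaC1}, and the choices of $\varepsilon_0$ in Lemma~\ref{lemma04} exactly as after \eqref{eq5231tue} in the proof of Lemma~\ref{lemma02}; this closes the induction.

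I expect the main obstacle to be confined to the inductive step and to be twofold. First, the \emph{uniformity over the rescaled domains} $\Omega_{\psi_{\theta^l}}(0,2)$ of every geometric constant entering the argument (boundary Caccioppoli, boundary $C^2$, the corrector bounds of Lemma~\ref{bdry_corrector}, $C_\Omega$), which is what is needed for $\tilde C_4$ to be a constant; this rests on $\nabla\psi_{\theta^l}=\nabla\psi(\theta^l\,\cdot)$, so the $C^{1,Dini}$ modulus of $\psi_{\theta^l}$ is $\varrho_{\nabla\psi}(\theta^l\,\cdot)\le\varrho_{\nabla\psi}$ and no degeneration occurs. Second, the \emph{bookkeeping of $\varpi$}: one must keep the corrected datum $\tilde g$ normalized ($\tilde g(0)=0$, vanishing tangential gradient at $0$) and of size $\le\varepsilon_0$ at every scale — which is precisely why the conditions $\nabla\Pi^{\beta,l}_{\varepsilon,j}(0)\mathbf B^\beta_j(\varepsilon,l)=0$ ($\mathbf B(\varepsilon,l)$ proportional to $\mathbf n_j(0)$) and the normalization of the $\psi$-term in $\varpi$ (division by $\varrho_{\nabla\psi}(1,\abs{x'}<2)$, with the Iverson-bracket convention covering the flat case) are hard-wired into the statement — and then extract the clean decay inequality $\varpi(\theta^k)\,\omega_{\mathbf A}(\theta^{k+1})\le\varepsilon_0\,\varpi(\theta^{k+1})$, the last delicate point, handled as in Lemma~\ref{lemma02}.
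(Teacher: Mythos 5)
Your proposal follows the paper's proof essentially verbatim: induction on $k$, the same rescaled function $w_\varepsilon$, the observation that $\Pi^{\beta,l}_{\varepsilon,j}=I^\beta_j$ on the boundary so the new boundary datum $\tilde g$ is explicit, verification of $\tilde g(0)=|\nabla\tilde g(0)|=0$ and $\|\nabla\tilde g\|_{L^\infty}\le\varepsilon_0$, a single application of Lemma \ref{lemma04} to $w_\varepsilon$, rescaling back with $\mathbf B(\varepsilon,k)=\varepsilon_0^{-1}\mathbf b$, and absorption of $\omega_{\tilde{\mathbf A}}(\theta)$ into $\varpi(\theta^{k+1})$ via \eqref{dec_omega}, \eqref{condition_A} and the choice of $\varepsilon_0,\theta$. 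The one device you only gesture at is the paper's explicit WLOG initial dilation \eqref{eq10051thr}, which makes $\int_0^\theta\varrho_{\nabla\psi}(t,|x'|<2)\,t^{-1}\,dt$ small enough to close the bound $\|\nabla\tilde g\|_{L^\infty}\le\varepsilon_0$, but you correctly identify that step as the delicate point and your route is otherwise the same.
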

\begin{proof}

We prove the lemma by induction on $k$. First, the case $k=1$ hold from Lemma \ref{lemma04} with 
\begin{align*}
 \mathbf{B}^\beta_j(\varepsilon,l)=\frac{\mathbf{B}^\beta_j(\varepsilon,\Omega)}{\varpi(1,\Omega(0,2))}. 
\end{align*}

Suppose there exists constants $\mathbf{B}(\varepsilon,l)$ such that \eqref{eq8171thu}, \eqref{eq8172thu} for all integers up to some $k \geq 1$, where $0\leq l\leq k-1$ with $\varepsilon <\theta^{k-1}\varepsilon_0$.
For $x\in \Omega_{\psi_{\theta^k}}(0,2)$, we consider the function
\begin{align*}
w_\varepsilon(x)=\frac{u_\varepsilon(\theta^k x)-\sum^{k-1}_{l=0}  \varpi(\theta^l,\Omega(0,2))\Pi^{\beta,l}_{\varepsilon,j}(\theta^k x)\mathbf{B}_j^\beta(\varepsilon,l) }{  \theta^k \varepsilon_0^{-1}  \varpi(\theta^k ,\Omega(0,2))}
\end{align*}
By the scaling property and \eqref{eq7191wed}, we have
\begin{equation} \label{eq10052thr}
\cL_{\varepsilon/\theta^k} w_\varepsilon=0 \;\text{ in }\;\Omega_{\psi_{\theta^k}}(0,2),\quad w_\varepsilon=\tilde{g} \;\text{ on }\;\partial\Omega_{\psi_{\theta^k}}(0,2),
\end{equation}
where 
\begin{align*}
\tilde{g} =\frac{g(\theta^k x)-\sum^{k-1}_{l=0}  \varpi(\theta^l,\Omega(0,2)) I^\beta_j(\theta^k x)\mathbf{B}^\beta_j(\varepsilon,l) }{  \theta^k \varepsilon_0^{-1}  \varpi(\theta^k,\Omega(0,2))} .
\end{align*}

It then follows from  \eqref{eq9061wed}, \eqref{eq8171thu} that
\begin{align*} 
\tilde{g} (0)=|\nabla \tilde{g} (0)|=0,
\end{align*}
and
\begin{align*}
&\norm{\nabla \tilde{g} }_{L^\infty(\partial\Omega_{\psi_{\theta^k}}(0,2))} \\&\leq  \frac{\varepsilon_0}{\varpi(\theta^k,\Omega(0,2))}\Bigg( \norm{\nabla  \tilde{g} (\theta^k \cdot)-\nabla g(0)}_{L^\infty(\partial\Omega_{\psi_{\theta^k}}(0,2))}\\
&\qquad +\sum^{k-1}_{l=0}\varpi(\theta^l,\Omega(0,2)) \norm{ \nabla  I^\beta_j(\theta^k \cdot)\mathbf{B}^\beta_j(\varepsilon,l)- \nabla\Pi^{\beta,l}_{\varepsilon,j}(0)\mathbf{B}^\beta_j(\varepsilon,l)}_{L^\infty(\partial\Omega_{\psi_{\theta^k}}(0,2))}  \Bigg)\\
&\leq  \frac{\varepsilon_0}{\varpi(\theta^k,\Omega(0,2))}\Bigg( \varrho_{\nabla g}(\theta^k,\partial\Omega(0,2))+ C_1 \int^1_0\frac{\varpi(t,\Omega(0,2))}{t}dt\frac{\tilde{C_4}}{\theta}\varrho_{\nabla \psi}(\theta^k,|x'|< 2)\Bigg)\\
&\leq \frac{\varepsilon_0}{2}+C_1\int^1_0\frac{\varpi(t,\Omega(0,2))}{t}dt\frac{\tilde{C_4}}{\theta}\varrho_{\nabla \psi}(1,|x'|< 2)\\
&\leq  \frac{\varepsilon_0}{2}+C_1\int^1_0\frac{\varpi(t,\Omega(0,2))}{t}dt\frac{\tilde{C_4}}{\theta}\frac{\varrho_{\nabla \psi}(\theta,|x'|< 2)}{\theta^\beta}\\
&\leq  \frac{\varepsilon_0}{2}+C_1\int^1_0\frac{\varpi(t,\Omega(0,2))}{t}dt\frac{\tilde{C_4}}{\theta}\frac{C_1}{\theta^\beta}\int^\theta_0\frac{\varrho_{\nabla \psi}(t,|x'|< 2)}{t}dt
\end{align*}
where \eqref{dec_omega} is used in the second last line. Note now that, let $\psi_\delta(x')=\delta^{-1}\psi(\delta x')$ for $x'\in \bR^{n-1}$, $\psi_\delta(0)=0$, $\norm{\nabla \psi_\delta}_{L^\infty(\bR^{n-1})}=\norm{\nabla \psi}_{L^\infty(\bR^{n-1})}$, and
\begin{align*}
  \int^1_0\frac{\varrho_{\nabla \psi_\delta}(t,|x'|<2)}{t}dt=\int^1_0\frac{\varrho_{\nabla\psi}(\delta t,|x'|<2)}{t}dt=\int^\delta_0\frac{\varrho_{\nabla \psi}(t,|x'|<2)}{t}dt.
\end{align*}
Thus, we can make an initial dilation of the independent variables. So from here on, we assume that $\Omega_\psi$ is such that $\psi$ satisfies
\begin{align}\label{eq10051thr}
\begin{split}
&\Bigg(\frac{1}{\varrho_{\nabla \psi}(1,|x'|<2)[\varrho_{\nabla \psi}(1,|x'|<2)\neq 0]+[\varrho_{\nabla \psi}(1,|x'|<2) = 0]}\\
&\qquad\qquad\qquad+C_1\int^1_0\frac{\varpi(t,\Omega(0,2))}{t}dt\frac{\tilde{C_4}}{\theta}\frac{C_1}{\theta^\beta}\Bigg)\int^\theta_0\frac{\varrho_{\nabla \psi}(t,|x'|< 2)}{t}dt<\frac{\varepsilon_0}{2}.
\end{split}
\end{align}
This implies that
\begin{equation*} 
\norm{\nabla  \tilde{g} }_{L^\infty(\partial\Omega_{\psi_{\theta^k}}(0,2))} \leq \varepsilon_0.
\end{equation*}

By the induction hypothesis with \eqref{eq8172thu}, we obtain
\begin{equation*} 
\bigg(\fint_{\Omega_{\psi_{\theta^k}}(0,2)}\abs{w_\varepsilon}^2 \bigg)^{1/2}\leq 1.
\end{equation*}
Applying Lemma \ref{lemma04} with \eqref{eq10052thr}, $\varepsilon \theta^l\leq \varepsilon \theta^{-k+1}$, we have
\begin{equation}  \label{eq10061fri}
\bigg(\fint_{\Omega_{\psi_{\theta^k}}(0,\theta)} \bigg| w_\varepsilon(x)- \Phi^\beta_{\varepsilon/\theta^k,j} (x,\Omega_{\psi_{\theta^k}},\mathbf{A})    \mathbf{B}^\beta_j(\varepsilon) \bigg| ^2 dx \bigg)^{1/2}   
\leq \theta\omega_{\tilde{\mathbf{A}} }(\theta,\Omega_{\psi_{\theta^k}}(0,2)).
\end{equation}
where $\tilde{\mathbf{A}} =\mathbf{A}(\theta^k x)$ and
\[
\mathbf{B}^\beta_j(\varepsilon)= \mathbf{B}^\beta_j(\varepsilon,\Omega_{\psi_{\theta^k}})=\mathbf{n}_j(0)\mathbf{n}_i(0)   \fint_{\Omega_{\psi_{\theta^k}}(0,\theta)} D_iw^\beta_\varepsilon.  
\]
By \eqref{eq10061fri} with scaling, we have 
\begin{align*}
\bigg(\fint_{\Omega(0,\theta^{k+1})} \bigg| u_\varepsilon(x)-\sum^{k}_{l=0}\varpi(\theta^l,\Omega(0,2))  \Pi^{\beta,l}_{\varepsilon,j}(x)\mathbf{B}_j^\beta(\varepsilon,l)  \bigg| ^2 dx \bigg)^{1/2}   \\
\quad\leq  \frac{\theta^{k+1} }{\varepsilon_0}\varpi(\theta^k,\Omega(0,2))  \omega_{\tilde{\mathbf{A}} }(\theta,\Omega_{\psi_{\theta^k}}(0,2)) 
\end{align*}
where
\begin{align*}
 \mathbf{B}^\beta_j(\varepsilon,k)=\frac{\mathbf{B}^\beta_j(\varepsilon,\Omega_{\psi_{\theta^k}})}
{\varepsilon_0}
\end{align*}
Note that by definition of $\omega$ with \eqref{condition_A}, for   $x\in \Omega_{\psi_{\theta^k}}(0,2)\subset \Omega(0,2)$,  
\[
\omega_{\tilde{\mathbf{A}} }(\theta,x)\leq \frac{2}{C_\Omega}\omega_{\mathbf{A}}( \theta^{k+1},x).
\]
By chosen constants $\varepsilon_0$, $\theta$ and definition of $\varpi$, we observe that
\begin{align*}
 &\frac{\theta^{k+1} }{\varepsilon_0}\varpi(\theta^k,\Omega(0,2))  \omega_{\tilde{\mathbf{A}} }(\theta,\Omega_{\psi_{\theta^k}}(0,2))\\
 &\leq \frac{\theta^{k+1} }{\varepsilon_0}\frac{2}{C_\Omega}\omega_{\mathbf{A}}(\theta^{k+1},\Omega(0,2)) C_1\Bigg(\int^{\theta^k}_0 \frac{\omega_\mathbf{A}(t,\Omega(0,2))}{t}dt+2\int^{\theta^k}_0 \frac{\varrho_{\nabla g}(s,\partial \Omega(0,2))}{t}dt\\
& \quad+\frac{\int^{\theta^k}_0 \varrho_{\nabla \psi}(t,|x'|< 2)/tdt}{\varrho_{\nabla \psi}(1,|x'|<2)[\varrho_{\nabla \psi}(1,|x'|<2)\neq 0]+[\varrho_{\nabla \psi}(1,|x'|<2) = 0]}   \Bigg)  \\
&\leq  \frac{\theta^{k+1} }{\varepsilon_0}\varpi(\theta^{k+1},\Omega(0,2)) \frac{2C_1}{C_\Omega}(\varepsilon_0+\frac{2\varepsilon_0}{C_1}+\varepsilon_0)\leq\frac{\theta^{k+1} }{\varepsilon_0}\varpi(\theta^{k+1},\Omega(0,2)).
\end{align*}
By definition of $\Pi^{\beta,l}_{\varepsilon,j}$ and Caccioppoli's inequality for $w_\varepsilon$ with constnats $\theta$, $\varepsilon_0$, we obtain \eqref{eq8171thu} with $\tilde{C_4}= C_4/({\varepsilon_0}^{3/2}\omega_\mathbf{A}(1,\Omega(0,2)))$. By combining all of them, we conclude \eqref{eq8172thu}. This completes the induction.

\end{proof}

The last step in compactness method to obtain boundary estimates is simplified by separating it into two-step, both of which use the blow-up argument.

\begin{lemma}  \label{lemma06}
Assume the coefficients $\mathbf{A}=(a^{ij})$ of the operator $\cL_\varepsilon$ satisfy the  condition \eqref{ellipticity}, \eqref{perodicity} and are of Dini mean oscillation in $\bR^n$. Let $\Omega$ be a bounded $C^{1,Dini}$ domain. Suppose $u_\varepsilon\in W^{1,2}(\Omega(0,2);\bR^m)$ be a weak solution of
\[ 
\cL_\varepsilon u_\varepsilon= 0 \;\text{ in }\;\Omega(0,2),\quad u_\varepsilon=g\;\text{ on }\;\partial \Omega(0,2),
\] 
where $g\in C^{1,Dini}(\partial\Omega(0,2);\bR^{m})$. Then,  
\begin{equation*}
\begin{split}
&\bigg( \fint_{\Omega(0,r)}|u_\varepsilon(x)-g(0)-\sum^n_{j=1}\Phi^\beta_{\varepsilon,j}\frac{\partial g}{\partial x_j}(0)|^2 \bigg)^{1/2} \\
&\quad\leq Cr\Bigg\{   \bigg(\fint_{\Omega(0,2)}\abs{  u_\varepsilon}^2 \bigg)^{1/2} +g(0)+\norm{\nabla g}_{L^\infty(\partial\Omega(0,2))}+\int^1_0\frac{\varrho_{\nabla g}(t,\partial\Omega(0,2))}{t}dt  \Bigg\},
\end{split}
\end{equation*}
for any $0<r<1$, where Dirichlet corrector $\Phi_{\varepsilon,j}^{\beta}=\Phi_{\varepsilon,j}^{\beta}(\cdot,\Omega_{\psi}(0,2),\mathbf{A})$ and $C=C(n,m,\lambda,\Lambda,\omega_{\mathbf{A}},\Omega)$.
\end{lemma}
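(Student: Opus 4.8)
The plan is to derive Lemma \ref{lemma06} from the iteration estimate of Lemma \ref{lemma05} together with two successive blow-up arguments, exactly mirroring the interior scheme in Lemma \ref{lemma03}. First I would reduce to the normalized situation: by subtracting the affine function $g(0) + \nabla g(0)\cdot x$ (harmonic for the constant-coefficient part, but here one instead absorbs it into the boundary data and uses the Dirichlet correctors $\Phi_{\varepsilon,j}^\beta$ to carry the linear part), one arranges $g(0) = |\nabla g(0)| = 0$, and by dividing through by
\[
J := \bigg(\fint_{\Omega(0,2)}\abs{u_\varepsilon}^2\bigg)^{1/2} + g(0) + \norm{\nabla g}_{L^\infty(\partial\Omega(0,2))} + \frac{C_1}{\varepsilon_0}\int_0^1\frac{\varrho_{\nabla g}(t,\partial\Omega(0,2))}{t}\,dt
\]
one normalizes so that the hypotheses \eqref{eq9061wed} of Lemma \ref{lemma05} are met. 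One should also handle separately the trivial regime $\varepsilon \geq \varepsilon_0\theta$, where the conclusion follows directly from the $C^1$ estimate with Dini mean oscillation coefficients (\cite[Theorem 1.5.]{DK17}) applied up to the boundary, since $\int_0^1 \omega_{\mathbf A(\cdot/\varepsilon)}(t)/t\,dt$ is then controlled.

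Next I would fix $k \geq 2$ with $\varepsilon_0\theta^k \leq \varepsilon < \varepsilon_0\theta^{k-1}$ and invoke Lemma \ref{lemma05} to get the approximation of $u_\varepsilon$ on $\Omega(0,\theta^k)$ by the combination $\sum_{l=0}^{k-1}\varpi(\theta^l,\Omega(0,2))\,\Pi_{\varepsilon,j}^{\beta,l}\mathbf{B}_j^\beta(\varepsilon,l)$, with the bound on the right-hand side of \eqref{eq8172thu} and the coefficient bounds \eqref{eq8171thu}. Using Lemma \ref{lemmaC1} (Dini continuity of $\nabla g$ and $\nabla\psi$, Dini mean oscillation of $\mathbf A$) the summed quantity $\sum_{l} \varpi(\theta^l,\cdot)$ is $O(k) = O(1 + \log(1/\varepsilon))$, which together with $|\mathbf{B}(\varepsilon,l)| \leq \tilde C_4/\theta$ and the $L^\infty$ bound on the Dirichlet correctors (Lemma \ref{bdry_corrector}, which also gives $|\Phi_{\varepsilon,j}^\beta - I_j^\beta| \leq C\varepsilon^{1-\tau}d_x^\tau$ so that $\fint_{\Omega(0,\theta^k)}|\Pi_{\varepsilon,j}^{\beta,l}|^2 \leq C(\theta^k + \varepsilon)^2 \leq C\varepsilon^2$) yields
\[
\bigg(\fint_{\Omega(0,\theta^k)}\abs{u_\varepsilon - \overline{u_\varepsilon}^{\Omega(0,\theta^k)}}^2\bigg)^{1/2} \leq C\varepsilon\, J,
\]
paralleling \eqref{eq5233tue}. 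Then the first blow-up: set $w_\varepsilon(x) = (\varepsilon J)^{-1}(u_\varepsilon(\varepsilon x) - \text{const})$ on $\Omega_{\psi_\varepsilon}(0,\theta/\varepsilon_0)$, note $\cL_1 w_\varepsilon = 0$ there with $w_\varepsilon = \tilde g$ on the boundary portion and $\big(\fint_{\Omega_{\psi_\varepsilon}(0,\theta/\varepsilon_0)}|w_\varepsilon|^2\big)^{1/2} \leq C$, and apply the boundary $C^1$ estimate with Dini mean oscillation coefficients (\cite[Theorem 1.5.]{DK17}) plus boundary Caccioppoli to conclude $\norm{\nabla w_\varepsilon}_{L^\infty(\Omega_{\psi_\varepsilon}(0,\theta/2))} \leq C$. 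Unravelling the scaling gives $\norm{\nabla u_\varepsilon}_{L^\infty(\Omega(0,\varepsilon\theta/2))} \leq CJ$, and hence the Lipschitz bound on a ball of radius $\sim\varepsilon$ at the boundary; a covering/translation argument then propagates this to all of $\Omega(0,r)$ for $0 < r < 1$, and finally the desired oscillation estimate for $u_\varepsilon - g(0) - \sum_j \Phi_{\varepsilon,j}^\beta \partial_j g(0)$ on $\Omega(0,r)$ follows by integrating the gradient bound together with $|\Phi_{\varepsilon,j}^\beta| \leq C_3$ and the fact that this difference vanishes (in the appropriate averaged sense) to first order at the origin.

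The main obstacle I anticipate is the careful bookkeeping of the Dirichlet correctors under the repeated dilations: the functions $\Pi_{\varepsilon,j}^{\beta,l}(\cdot) = \theta^l\Phi_{\varepsilon/\theta^l,j}^\beta(\theta^{-l}\cdot,\Omega_{\psi_{\theta^l}},\mathbf A)$ are built on rescaled domains $\Omega_{\psi_{\theta^l}}$ whose defining functions $\psi_{\theta^l}(x') = \theta^{-l}\psi(\theta^l x')$ have $C^{1,\mathrm{Dini}}$ norms that must be shown to be uniformly controlled (this is where the initial-dilation normalization \eqref{eq10051thr} and the scaling identity $\int_0^1 \varrho_{\nabla\psi_\delta}(t)/t\,dt = \int_0^\delta \varrho_{\nabla\psi}(t)/t\,dt$ enter), and one must verify that switching between $\Phi^\beta_{\varepsilon,j}$ evaluated on the original domain and on the dilated domains only costs factors controlled by $\varpi$. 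A secondary technical point is the step $\fint_{\Omega(0,\theta^k)}|\Pi^{\beta,l}_{\varepsilon,j}|^2 \leq C\varepsilon^2$, which requires combining the near-boundary decay $|\Phi^\beta_{\varepsilon,j} - I^\beta_j| \leq C\varepsilon^{1-\tau}d_x^\tau$ of Lemma \ref{bdry_corrector} with $d_x \leq |x| \lesssim \theta^k \lesssim \varepsilon/\varepsilon_0$ on $\Omega(0,\theta^k)$; one chooses $\tau$ close to $1$ so that $\varepsilon^{1-\tau}(\theta^k)^\tau \lesssim \varepsilon$. Apart from these, the argument is a routine transcription of the interior proof of Lemma \ref{lemma03} with $\varepsilon\chi(x/\varepsilon)$ replaced by $\Phi_\varepsilon - I$ and the averaged oscillation data bundled into the single modulus $\varpi$.
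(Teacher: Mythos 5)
There is a genuine gap, and it comes from transcribing the interior scheme of Lemma \ref{lemma03} too literally. The conclusion of Lemma \ref{lemma06} is an $L^2$-decay estimate at the boundary point $0$ that must hold for \emph{every} radius $0<r<1$. In your proposal the iteration of Lemma \ref{lemma05} is invoked only once, at the finest scale $k$ with $\theta^k\sim\varepsilon/\varepsilon_0$, and is then followed by a blow-up giving $\norm{\nabla u_\varepsilon}_{L^\infty(\Omega(0,\varepsilon\theta/2))}\le CJ$. That leaves the whole intermediate range $\varepsilon/\varepsilon_0\le r<\theta$ unproved: the paper handles it (its Case I) by applying \eqref{eq8172thu} at the step $k$ with $\theta^{k+1}\le r<\theta^k$ and then estimating the accumulated corrector sum $\sum_{l<k}\varpi(\theta^l)\Pi^{\beta,l}_{\varepsilon,j}\mathbf{B}^\beta_j(\varepsilon,l)$ on $\Omega(0,\theta^k)$ via Lemma \ref{bdry_corrector} (with $\tau=1/2$ and $\varepsilon\le\varepsilon_0\theta^l$, so that $|\Pi^{\beta,l}_{\varepsilon,j}|\le C\theta^k$ there), obtaining $(\fint_{\Omega(0,r)}|v_\varepsilon|^2)^{1/2}\le Cr$ directly, with no mean subtraction and no gradient bound. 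Your substitute for this range --- ``a covering/translation argument propagates the Lipschitz bound on a set of size $\sim\varepsilon$ to all of $\Omega(0,r)$, then integrate the gradient'' --- is circular: a uniform bound $\norm{\nabla u_\varepsilon}_{L^\infty(\Omega(0,r))}\lesssim J$ is precisely the content of Proposition \ref{prop02}, whose proof \emph{uses} Lemma \ref{lemma06}; indeed, to control $|\nabla u_\varepsilon(x)|$ at an interior point with $\varepsilon\lesssim d_x\ll r$ by the interior estimate (Theorem \ref{thm00}) one needs the $L^2$ average of $u_\varepsilon-g(0)-\Phi^\beta_{\varepsilon,j}\partial_jg(0)$ over $\Omega(\bar x,\tfrac32 d_x)$ to be $O(d_x J)$, which is exactly the statement being proved. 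Translating to nearby boundary points does not help either, since the subtracted data $g(x_1)+\Phi\cdot\nabla g(x_1)$ changes from point to point, and integrating the gradient of the difference would additionally require a Lipschitz bound for $\Phi_\varepsilon$ that the paper neither proves nor needs (only $\norm{\Phi_\varepsilon}_{L^\infty}\le C_3$ and the decay $|\Phi_\varepsilon-I|\le C\varepsilon^{1-\tau}d_x^\tau$ are available).

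Two secondary points. First, for the small radii $0<r<\varepsilon/\varepsilon_0$ your blow-up is close in spirit to the paper's Case II, but the paper closes that case by feeding the already-established Case I estimate at $r=2\varepsilon/\varepsilon_0$ into a boundary $C^1$ estimate for $\cL_1$ on the $\varepsilon$-rescaled domain (citing the boundary version in \cite{DEK18}, not the interior \cite{DK17}); so even this step presupposes the intermediate-scale decay you skipped. Second, your bookkeeping bound ``$\sum_l\varpi(\theta^l,\cdot)=O(k)$'' is too crude: by Lemma \ref{lemmaC1} this sum is $O(1)$, and with only $O(k)$ you would pick up an extra $\log(1/\varepsilon)$ and lose the claimed $C\varepsilon J$ bound at scale $\theta^k$. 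The correct strategy is the paper's: prove the $L^2$ decay \eqref{eq10161mon} scale by scale from Lemma \ref{lemma05} and Lemma \ref{bdry_corrector}, and postpone any pointwise gradient bound to Proposition \ref{prop02}.
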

\begin{proof}
We may assume that $0<\varepsilon<\varepsilon_0\theta$, where $\varepsilon_0$, $\theta$ are the constants given by Lemmae \ref{lemma04}.  Because in the case of $\varepsilon_0\theta\leq \varepsilon$, it is clearly by boundary $C^1$ estimates with Dini mean oscillation of $\mathbf{A}$ (See, \cite[Theorem 1.5.]{DEK18}, third paragraph of \cite[p. 453]{DEK18}) and the observation that, for $\varepsilon\geq\varepsilon_0\theta$,
\begin{align*}
\int^1_0\frac{\omega_{\mathbf{A}(\cdot/\varepsilon)}(t,\Omega(0,2))}{t}dt&\leq \frac{2}{C_\Omega}\int^1_0\frac{\omega_\mathbf{A}(t/\varepsilon,\Omega(0,2))}{t}dt\\
&\leq C\int^{\frac{1}{\varepsilon_0\theta}}_0\frac{\omega_\mathbf{A}(t,\Omega(0,2))}{t}dt<+\infty.
\end{align*}
Suppose that
\[
\theta^{i+1}\leq \frac{\varepsilon}{\varepsilon_0}<\theta^i\quad\text{for some $i\geq 1$.}
\]
Define
\[
J=\bigg(\fint_{\Omega(0,2)}\abs{u_\varepsilon}^2 \bigg)^{1/2}+g(0)+C_3\norm{\nabla g}_{L^\infty(\partial\Omega(0,2))} +\frac{C_1}{\varepsilon_0} \int^1_0\frac{\varrho_{\nabla  g}(t,\partial\Omega(0,2))}{t}dt.
\]
For $x\in\Omega(0,2)$, let
\[
v_{\varepsilon}=J^{-1}\bigg\{u_\varepsilon(x)-u_\varepsilon(0)-\sum^n_{j=1}\Phi^\beta_{\varepsilon,j}\frac{\partial g}{\partial x_j}(0)\bigg\},~~\text{and }~~~~ h(x)=J^{-1}\bigg\{g(x)-g(0)-\sum^n_{j=1}I^\beta_{j}\frac{\partial g}{\partial x_j}(0)\bigg\} ,
\]
where $\Phi^\beta_{\varepsilon,j}=\Phi^\beta_{\varepsilon,j}(\cdot, \Omega(0,2),\mathbf{A})$. It is easy to check that
\begin{equation}\label{eq10171the}
\bigg(\fint_{\Omega(0,2)}\abs{v_\varepsilon}^2 \bigg)^{1/2}\leq 1,\quad C_1\int^1_0\frac{\varrho_{\nabla  h}(t,\partial\Omega(0,2))}{t}dt \leq \varepsilon_0,\quad h(0)=|\nabla h(0)|= 0,  
\end{equation}
and
\[ 
\cL_\varepsilon v_\varepsilon= 0 \;\text{ in }\;\Omega(0,2),\quad v_\varepsilon=h\;\text{ on }\;\partial \Omega(0,2).
\] 
Under the above settings, it is sufficient to prove following
\begin{equation}\label{eq10161mon}
\bigg( \fint_{\Omega(0,r)}|v_\varepsilon|^2 \bigg)^{1/2}\leq Cr
\end{equation}
where $C=C(n,m,\lambda,\Lambda,\omega_{\mathbf{A}},\Omega)$. Here, we can assume that $0<r<\theta$. Indeed, the case where $\theta\leq r<1$ is trivial from \eqref{condition_A}, \eqref{eq10171the}.

To prove \eqref{eq10161mon}, we distinguish two cases:

\textbf{Case I.} We consider the case that
\[
\frac{\varepsilon}{\varepsilon_0}\leq r<\theta.
\]
In this case, we can assume that $\theta^{k+1}\leq r <\theta^k$ for some $k=1,\ldots,i,$ and by using Lemma \ref{lemma05}, we can obtain the following 
\begin{align*}
\bigg( \fint_{\Omega(0,r)}|v_\varepsilon|^2 \bigg)^{1/2}& \leq C\bigg( \fint_{\Omega(0,\theta^k)}|v_\varepsilon|^2 \bigg)^{1/2} \\
&\leq C\bigg(\fint_{\Omega(0,\theta^k)} \bigg| v_\varepsilon(x)-\sum^{k-1}_{l=0}\varpi(\theta^l,\Omega(0,2))  \Pi^{\beta,l}_{\varepsilon,j}(x)\mathbf{B}_j^\beta(\varepsilon,l)  \bigg| ^2 dx \bigg)^{1/2} \\
&\quad +C\sum^{k-1}_{l=0}\bigg(\fint_{\Omega(0,\theta^k)} \bigg| \varpi(\theta^l,\Omega(0,2))  \Pi^{\beta,l}_{\varepsilon,j}(x)\mathbf{B}_j^\beta(\varepsilon,l)  \bigg| ^2 dx \bigg)^{1/2}\\
&\leq  C\frac{\theta^k }{\varepsilon_0}\varpi(\theta^k,\Omega(0,2)) +C\sum^{k-1}_{l=0}\varpi(\theta^l,\Omega(0,2)) \norm{\Pi^l_\varepsilon}_{L^\infty(\Omega(0,\theta^k))}\frac{\tilde{C_4}}{\theta}\\
&:= \text{(i)}+\text{(ii)}.
\end{align*}
 
It is easy to check that, by choice of $\theta$, \eqref{eq10051thr}, \eqref{eq10171the}, 
\[
\text{(i)}\leq C\frac{\theta^k}{\varepsilon_0}\bigg(C_1\varepsilon_0+2\varepsilon_0+ C_1\frac{\varepsilon_0}{2}\bigg)\leq C\theta^k.
\]
To prove (ii), recall that $I^\beta_j(x)=x_je^\beta$, it follows from Lemma \ref{bdry_corrector} with $x\in \Omega(0,\theta^k)$ for $l<k$ that
\begin{align*}
\big| \Pi^{\beta,l}_{\varepsilon,j}(x)\big|&\leq \big|\Pi^{\beta,l}_{\varepsilon,j}(x)-I^{\beta}_j(x)\big|+\big|I^{\beta}_j(x)\big|\\
&= \big|\theta^l \Phi^{\beta}_{\frac{\varepsilon}{\theta^l},j}(\theta^{-l}x,\Omega_{\psi_{\theta^l}},\mathbf{A})-\theta^lI^{\beta}_j(\theta^{-l}x)\big|+\big|I^{\beta}_j(x)\big|\\
&\leq C\theta^l \bigg(\frac{\varepsilon}{\theta^l} \bigg)^{1-\tau} \dist(\theta^{-l}x,\partial\Omega_{\psi_{\theta^l}}(0,2))+2\theta^k\\
&\leq C\theta^{l\tau}\varepsilon^{1-\tau}2\theta^{k-l}+2\theta^k  \\
&\leq C\theta^{l\tau}(\varepsilon_0\theta^l)^{1-\tau}2\theta^{k-l}+2\theta^k\qquad\text{by $ \varepsilon\leq  \varepsilon_0\theta^k\leq \varepsilon_0\theta^l$}\\
&\leq C\varepsilon_0^{1/2}\theta^k+2\theta^k \leq C\theta^k \qquad\text{by chooing $\tau=1/2$}.
\end{align*}
Then, we can estimate that, based on the above and similarly to (i), 
 \[
\text{(ii)}\leq  C\theta^k.
\]
Thus, by combining all of the, with $\theta^k\leq r/\theta$, we obtain the estimate \eqref{eq10161mon} for first case.

\textbf{Case II.} We consider the case that
\[
0<r<\frac{\varepsilon}{\varepsilon_0}.
\]
We use a blow-up method. For $x\in \Omega_{\psi_\varepsilon}(0,2/\varepsilon_0)$, let
\[
w_\varepsilon(x) =\frac{v_\varepsilon(\varepsilon x)}{\varepsilon}\quad\text{and}\quad \tilde{h} (x)=\frac{h(\varepsilon x)}{\varepsilon}.
\]
It is easy to check that
\[ 
\cL_1 w_\varepsilon= 0 \;\text{ in }\;\Omega_{\psi_\varepsilon}(0,2/\varepsilon_0),\quad w_\varepsilon=\tilde{h} \;\text{ on }\;\partial \Omega_{\psi_\varepsilon}(0,2/\varepsilon_0)
\] 
and
\begin{equation}   \label{eq10191thr}
\norm{\nabla\tilde{h}}_{L^\infty(\partial\Omega_{\psi_\varepsilon}(0,2/\varepsilon_0))}+\int^1_0\frac{\varrho_{\nabla  \tilde{h}}(t,\partial\Omega_{\psi_\varepsilon}(0,2/\varepsilon_0))}{t}dt<+\infty\quad\text{and}\quad \tilde{h}(0)=|\nabla \tilde{h}(0)|= 0.
\end{equation}

From \eqref{eq10191thr}, boundary $C^1$ estimates with Dini mean oscillation of $\mathbf{A}$ (See, [Theorem 1.5.]\cite{DEK18}, third paragraph of \cite[p. 453]{DEK18}), boundary Caccioppoli's inequality and, for $0<s<1/\varepsilon_0$,  we have 
\begin{align*}
&\bigg( \fint_{\Omega_{\psi_\varepsilon}(0,s)}|w_\varepsilon|^2 \bigg)^{1/2}  \leq\norm{w_\varepsilon-w_\varepsilon(0)}_{L^\infty(\Omega_{\psi_\varepsilon}(0,s))}\leq Cs\norm{\nabla w_\varepsilon}_{L^\infty(\Omega_{\psi_\varepsilon}(0,\frac{4}{3\varepsilon_0}))}\\
&\quad\leq Cs\Bigg\{ \bigg( \fint_{\Omega_{\psi_\varepsilon}(0,\frac{5}{3\varepsilon_0})}|\nabla w_\varepsilon|^2 \bigg)^{1/2}+\norm{\nabla\tilde{h}}_{L^\infty(\partial\Omega_{\psi_\varepsilon}(0,\frac{5}{3\varepsilon_0}))}+\int^1_0\frac{\varrho_{\nabla  \tilde{h}}(t,\partial\Omega_{\psi_\varepsilon}(0,\frac{5}{3\varepsilon_0}))}{t}dt \Bigg\}\\
&\quad \leq Cs\Bigg\{ C\varepsilon_0\bigg( \fint_{\Omega_{\psi_\varepsilon}(0,2/\varepsilon_0)}|w_\varepsilon|^2 \bigg)^{1/2}+\norm{\nabla\tilde{h}}_{L^\infty(\partial\Omega_{\psi_\varepsilon}(0,2/\varepsilon_0)}+C \Bigg\}\\
&\quad \leq Cs\Bigg\{ \bigg( \fint_{\Omega_{\psi_\varepsilon}(0,2/\varepsilon_0)}|w_\varepsilon|^2 \bigg)^{1/2}+C \Bigg\}.
\end{align*}
Then, by scaling and \eqref{eq10161mon} with for first case $r=2\varepsilon/\varepsilon_0$, we imply the estimate \eqref{eq10161mon} for second case as follows
\begin{align*}
\bigg( \fint_{\Omega(0,r)}|v_\varepsilon|^2 \bigg)^{1/2} & \leq C r \varepsilon^2 \Bigg\{ \frac{1}{\varepsilon}\bigg( \fint_{\Omega(0,2\varepsilon/\varepsilon_0)}|u_\varepsilon|^2 \bigg)^{1/2}+C \Bigg\}\\
&\leq C r\varepsilon^2 \Bigg\{ \frac{1}{\varepsilon}\frac{2\varepsilon C}{\varepsilon_0}+C \Bigg\}\leq Cr.
\end{align*}
Thus, we have completed the proof of \eqref{eq10161mon}.

\end{proof}

\begin{proposition} \label{prop02}
Assume the coefficients $\mathbf{A}=(a^{ij})$ of the operator $\cL_\varepsilon$ satisfy the  condition \eqref{ellipticity}, \eqref{perodicity} and are of Dini mean oscillation in $\bR^n$. Let $\Omega$ be a bounded $C^{1,Dini}$ domain. Suppose $u_\varepsilon\in W^{1,2}(\Omega(x_0,2R);\bR^m)$ be a weak solution of
\[ 
\cL_\varepsilon u_\varepsilon= 0 \;\text{ in }\;\Omega(x_0,2R),\quad u_\varepsilon=g\;\text{ on }\;\partial \Omega(x_0,2R),
\] 
where $g\in C^{1,Dini}(\partial\Omega(x_0,2R);\bR^{m})$, $x_0\in \partial \Omega$ and $0<R<\frac{1}{2}\diam \Omega$. Then, we have
\begin{align}\label{lip_esti_bdry}
\begin{split}
\norm{\nabla u_\varepsilon}_{L^\infty(\Omega(x_0,R))}
\leq &C\Bigg\{\frac{1}{R}   \bigg(\fint_{\Omega(x_0,2R)}\abs{  u_\varepsilon}^p \bigg)^{1/p} +\frac{1}{R}\norm{g}_{L^\infty(\partial\Omega(x_0,2R))}\\&\qquad+\norm{\nabla g}_{L^\infty(\partial\Omega(x_0,2R))}+\int^R_0\frac{\varrho_{\nabla g}(t,\partial\Omega(x_0,2R))}{t}dt  \Bigg\},
\end{split}
\end{align} 
where $C=C(n,m,\lambda,\Lambda,\omega_{\mathbf{A}},\Omega,p)$ .
\end{proposition}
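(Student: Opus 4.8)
The strategy is to deduce \eqref{lip_esti_bdry} from the interior Lipschitz estimate (Theorem~\ref{thm00}) and the boundary oscillation estimate (Lemma~\ref{lemma06}); the one idea that makes this work is to apply Lemma~\ref{lemma06} at the boundary point \emph{nearest} to the point at which the gradient is estimated, and at a \emph{fixed} scale rather than at the distance to $\partial\Omega$, so that the $L^2$--mass of $u_\varepsilon$ is never compared over two sets of very different size. First I would carry out the routine reductions: a standard covering argument reduces matters to $R$ of the order of the local chart radius of $\partial\Omega$; a translation puts $x_0=0$; the dilation $x\mapsto Rx$ turns $\cL_\varepsilon$ into an operator of the same class with $\varepsilon$ replaced by $\varepsilon/R$ (all constants being uniform in this parameter) and carries the Dirichlet corrector of $\Omega(0,2R)$ into that of the rescaled domain up to an additive constant, so we may take $R=1$; and Hölder's inequality, preceded when $p<2$ by the local boundedness of weak solutions (available since $\mathbf{A}\in\mathsf{VMO}$, cf.\ Lemma~\ref{lemmaholder}), lets us replace $\bigl(\fint_{\Omega(0,2)}|u_\varepsilon|^p\bigr)^{1/p}$ by $\bigl(\fint_{\Omega(0,2)}|u_\varepsilon|^2\bigr)^{1/2}$. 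Denote by $N$ the resulting right-hand side of \eqref{lip_esti_bdry}.

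Fix $y\in\Omega(0,1)$ and set $d=\dist(y,\partial\Omega)$. If $d\ge\tfrac18$, then $B(y,\tfrac1{16})\subset\Omega\cap B(0,2)$ with $\cL_\varepsilon u_\varepsilon=0$ there, and Theorem~\ref{thm00}, applied to $u_\varepsilon$ minus its average over $B(y,\tfrac18)$ and combined with the measure--density property \eqref{condition_A}, immediately gives $|\nabla u_\varepsilon(y)|\le CN$. If $d<\tfrac18$, pick $z\in\partial\Omega$ with $|y-z|=d$. Then $\Omega(z,\tfrac12)\subset\Omega(0,2)$ and $|\Omega(z,\tfrac12)|\asymp|\Omega(0,2)|$ by \eqref{condition_A}, so the rescaled form of Lemma~\ref{lemma06} at $z$, with the equation taken on $\Omega(z,\tfrac12)$ and every datum bounded by $N$ --- here the comparability of the two sets is exactly what keeps a negative power of $d$ out of the $L^2$--term --- yields, with $w:=u_\varepsilon-g(z)-\Phi^z_{\varepsilon,j}\,\partial_j g(z)$ (where $\Phi^z_{\varepsilon,j}$ is the Dirichlet corrector on $\Omega(z,\tfrac12)$, so that $\cL_\varepsilon w=0$ in $\Omega(z,\tfrac12)$),
\[
\Bigl(\fint_{\Omega(z,\sigma)}\bigl|w-\overline{w}^{\,\Omega(z,\sigma)}\bigr|^2\Bigr)^{1/2}\le C\sigma N\qquad\text{for all }0<\sigma<\tfrac14 .
\]

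Taking $\sigma=2d$: since $B(y,d)\subset\Omega(z,2d)$ and $|\Omega(z,2d)|\le 2^n|B(y,d)|$, we get $\bigl(\fint_{B(y,d)}|w-\overline{w}^{\,B(y,d)}|^2\bigr)^{1/2}\le CdN$, and because $\cL_\varepsilon w=0$ in $B(y,d)$, Theorem~\ref{thm00} on $B(y,d)$ gives $|\nabla w(y)|\le \tfrac{C}{d}\bigl(\fint_{B(y,d)}|w-\overline{w}^{\,B(y,d)}|^2\bigr)^{1/2}\le CN$. Since $\nabla u_\varepsilon(y)=\nabla w(y)+\nabla\Phi^z_{\varepsilon,j}(y)\,\partial_j g(z)$, it remains only to bound $\|\nabla\Phi^z_{\varepsilon,j}\|_{L^\infty}$ near $\partial\Omega$ uniformly in $\varepsilon$. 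For this I would write $\Phi^z_{\varepsilon,j}-I_j=\varepsilon\chi_j(\cdot/\varepsilon)+v_\varepsilon$, where $v_\varepsilon$ solves $\cL_\varepsilon v_\varepsilon=0$ with boundary datum $-\varepsilon\chi_j(\cdot/\varepsilon)$ and obeys $|v_\varepsilon(x)|\le C\varepsilon^{1-\tau}d_x^\tau$ by the argument of Lemma~\ref{bdry_corrector}; rescaling by $\varepsilon$ turns $v_\varepsilon$ into a null--solution of $\cL_1$ on $\varepsilon^{-1}\Omega$, a $C^{1,Dini}$ domain with uniformly controlled (indeed improving as $\varepsilon\to0$) boundary character, carrying the bounded datum $-\chi_j$ whose gradient is Dini continuous --- the latter because \eqref{corrector} exhibits $\chi_j$ as a solution of $\cL_1$ with right-hand side the divergence of a coefficient of Dini mean oscillation, whence $\|\nabla\chi\|_{L^\infty}\le C$ and $\nabla\chi$ is Dini continuous by \cite{DK17}. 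The interior and boundary $C^1$ estimates for $\cL_1$ with coefficients of Dini mean oscillation (\cite{DK17,DEK18}) then give $\|\nabla v_\varepsilon\|_{L^\infty}\le C$, hence $\|\nabla\Phi^z_{\varepsilon,j}\|_{L^\infty}\le C$ near $\partial\Omega$, so $|\nabla\Phi^z_{\varepsilon,j}(y)\,\partial_j g(z)|\le C\|\nabla g\|_{L^\infty}\le CN$. Collecting the two cases and undoing the dilation proves \eqref{lip_esti_bdry}.

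I expect the genuinely delicate point to be this last uniform gradient bound for the Dirichlet corrector: after rescaling by $\varepsilon$ one must still distinguish the ranges $d_x\gtrsim\varepsilon$ (handled by the interior $C^1$ estimate for $\cL_1$) and $d_x\lesssim\varepsilon$ (handled by the boundary $C^1$ estimate), and in both keep the constants free of $\varepsilon$, which is possible precisely because rescaling by $\varepsilon$ flattens $\partial\Omega$. Everything else is a clean assembly of Theorem~\ref{thm00} and Lemma~\ref{lemma06}; the essential manoeuvre, worth isolating, is to anchor Lemma~\ref{lemma06} at the boundary point closest to $y$ at a fixed scale and then feed the resulting $O(d)$ oscillation bound for $w$ into the interior estimate on the small ball $B(y,d)$ to produce the $O(1)$ gradient bound.
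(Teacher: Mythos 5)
Your skeleton --- anchor Lemma~\ref{lemma06} at the boundary point $z$ nearest to $y$, then feed the resulting $O(d)$ oscillation bound into the interior estimate (Theorem~\ref{thm00}) on $B(y,d)$ --- is exactly the paper's strategy, and that part of your argument is sound. The gap is in how you remove the corrector afterwards. You write $\nabla u_\varepsilon(y)=\nabla w(y)+\nabla\Phi^z_{\varepsilon,j}(y)\,\partial_j g(z)$ and therefore need the uniform bound $\norm{\nabla\Phi_{\varepsilon,j}}_{L^\infty}\le C$ independent of $\varepsilon$. That bound is not a soft fact: it is essentially the boundary Lipschitz estimate for $\cL_\varepsilon$ with linear Dirichlet data (in Avellaneda--Lin and in Shen's book it is \emph{the} main step, proved by the same compactness machinery), so invoking it here is close to circular, and your sketched derivation does not close it. After rescaling by $\varepsilon$, the function $\tilde v(x)=\varepsilon^{-1}v_\varepsilon(\varepsilon x)$ satisfies only $\abs{\tilde v}\le C\,(d/\varepsilon)^{\tau}$; at points with $\varepsilon\ll d\ll 1$ the unit-scale interior $C^1$ estimate for $\cL_1$ controls $\nabla\tilde v$ by the unit-scale mean oscillation of $\tilde v$, which this pointwise bound does not control, while applying the estimate at scale $d/\varepsilon$ reintroduces rapidly oscillating coefficients and hence constants that blow up unless one appeals again to the homogenization-uniform interior estimate. (The statement is true and can be rescued: for $d\gtrsim\varepsilon$ apply Theorem~\ref{thm00} to $v_\varepsilon$ on $B(y,d)$ together with Lemma~\ref{bdry_corrector} to get $\abs{\nabla v_\varepsilon(y)}\le C(\varepsilon/d)^{1-\tau}\le C$; for $d\lesssim\varepsilon$ blow up and use the boundary $C^1$ estimate of \cite{DEK18} applied to $\Phi_{\varepsilon,j}-I_j$, which has \emph{zero} boundary data and a DMO divergence right-hand side --- your route through $v_\varepsilon$ with datum $-\varepsilon\chi_j(\cdot/\varepsilon)$ additionally presumes $\nabla\chi$ is Dini continuous, which \cite{DK17} does not guarantee under merely DMO coefficients.)

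The paper avoids this issue altogether, and that is the structural difference worth noting. It applies the interior estimate not to $w$ but to $u_\varepsilon-u_\varepsilon(0)$ (still an $\cL_\varepsilon$-solution), and only then splits the resulting $L^2$ average over $\Omega(\bar x,\tfrac32 d_x)$ into the piece handled by Lemma~\ref{lemma06} and the piece $\sum_j\Phi^\beta_{\varepsilon,j}\partial_j g(0)$. For the latter only the \emph{size} of $\Phi_\varepsilon$ on a ball of radius $\sim d_x$ is needed, and Lemma~\ref{bdry_corrector} with $\tau=1/2$ gives $\bigl(\fint_{\Omega(\bar x,\frac32 d_x)}\abs{\Phi_{\varepsilon,j}}^2\bigr)^{1/2}\le C d_x$ when $\varepsilon\le\varepsilon_0 d_x$; when $d_x<\varepsilon/\varepsilon_0$ the paper instead works at scale $\varepsilon$ (blow-up to $\cL_1$ plus the boundary $C^1$ estimate), where the same corrector bound yields $C\varepsilon$. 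This is precisely why the paper's proof has the two cases $d_x\ge\varepsilon/\varepsilon_0$ and $d_x<\varepsilon/\varepsilon_0$, and why it never needs $\nabla\Phi_\varepsilon$ at all. Either repair your corrector-gradient step along the lines above, or restructure as the paper does so that the corrector enters only through its $L^\infty$ smallness near the boundary.
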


\begin{proof}
Without of loss generality, by rescaling, we may assume that $x_0=0$ and $R=1$.
We suppose that $0<\varepsilon<\varepsilon_0\theta$, where $\varepsilon_0$, $\theta$ are the constants given by Lemma \ref{lemma04}. 

Let $x\in \Omega(0,1)$. Here, we can assume that $0<d_x=\dist(x,\partial \Omega )<\theta<1/4$. Indeed, the case where $\theta\leq d_x<1$ is trivial from  the interior Lipschitz estimate (Theorem \ref{thm00}) with \eqref{condition_A}. Moreover, Choose $\bar{x}\in \partial\Omega(0,2)$ such that $|x-\bar{x}|=d_x$ and $\Omega(\bar{x}, \frac{3}{2} d_x)\subset \Omega(0,2)$. Then, similar to before, we distinguish two cases.

\textbf{Case I.} We consider the case that
\[
\frac{\varepsilon}{ \varepsilon_0}\leq d_x  <\theta.
\]

  Since the interior Lipschitz estimate (Theorem \ref{thm00}) of $u_\varepsilon(\cdot)-u_\varepsilon(0)$ for $\cL_\varepsilon$, it follows from Lemma \ref{lemma06} with change coordinate that
\begin{align*}
&|\nabla u_\varepsilon(x)|\\
&\leq \frac{C}{d_x}     \bigg(\fint_{B(x,d_x/2)}\abs{  u_\varepsilon(\cdot)-u_\varepsilon(0)}^2 \bigg)^{1/2} \leq \frac{C}{d_x}     \bigg(\fint_{\Omega(\bar{x},\frac{3}{2}d_x)}\abs{  u_\varepsilon(\cdot)-u_\varepsilon(0)}^2 \bigg)^{1/2}\\
&\leq  \frac{C}{d_x} \bigg( \fint_{\Omega(\bar{x},\frac{3}{2}d_x)}|u_\varepsilon(x)-g(0)-\sum^n_{j=1}\Phi^\beta_{\varepsilon,j}\frac{\partial g}{\partial x_j}(0)|^2 \bigg)^{1/2} \\
&\qquad\qquad+ \frac{C}{d_x} \bigg( \fint_{\Omega(\bar{x},\frac{3}{2}d_x)}|\sum^n_{j=1}\Phi^\beta_{\varepsilon,j}\frac{\partial g}{\partial x_j}(0)|^2 \bigg)^{1/2} \\
&\leq C\Bigg\{  \bigg(\fint_{\Omega(0,2)}\abs{  u_\varepsilon}^2 \bigg)^{1/2} +\norm{g}_{L^\infty(\partial\Omega(x_0,2R))} +\norm{\nabla g}_{L^\infty(\partial\Omega(0,2))}+\int^1_0\frac{\varrho_{\nabla g}(t,\partial\Omega(0,2))}{t}dt  \Bigg\}\\
&\qquad\qquad +\frac{C\norm{\nabla g}_{L^\infty(\partial\Omega(0,2))}}{d_x}\bigg( \fint_{\Omega(\bar{x},\frac{3}{2}d_x)}| \Phi^\beta_{\varepsilon,j} |^2 \bigg)^{1/2}  .
\end{align*}
To prove the estimate for first case, it is enough to show that
\begin{equation} \label{eq11011wed}
\bigg( \fint_{\Omega(\bar{x},\frac{3}{2}d_x)}| \Phi^\beta_{\varepsilon,j} |^2 \bigg)^{1/2}  \leq C d_x.
\end{equation}
 
Recall that $\varepsilon\leq \varepsilon_0 d_x$ and $I^\beta_j(x)=x_je^\beta$. Then, it follows from Lemma \ref{bdry_corrector} with $\tau=1/2$ that 
\begin{align*}
&\bigg( \fint_{\Omega(\bar{x},\frac{3}{2}d_x)}| \Phi^\beta_{\varepsilon,j} |^2 \bigg)^{1/2} \\
&\leq C\Bigg\{ \bigg( \fint_{\Omega(\bar{x},\frac{3}{2}d_x)}| \Phi^\beta_{\varepsilon,j}-I^\beta_j |^2 \bigg)^{1/2}+\bigg( \fint_{\Omega(\bar{x},\frac{3}{2}d_x)}| I^\beta_{j} |^2 \bigg)^{1/2} \Bigg\}\\
&\leq C\varepsilon^{1/2} \bigg( \fint_{\Omega(\bar{x},\frac{3}{2}d_x)} d_z dz \bigg)^{1/2} +Cd_x
\leq Cd_x
\end{align*}
Thus, \eqref{lip_esti_bdry} is proved for $ \varepsilon / \varepsilon_0 \leq d_x  <\theta.$

\textbf{Case II.} We consider the case that
\[
0<d_x<\frac{\varepsilon}{  \varepsilon_0}.
\]
Note that $\varepsilon/\varepsilon_0<1/4$, and 
$
\Omega(\bar{x},\frac{3}{2}\frac{\varepsilon}{\varepsilon_0})\subset\Omega(0,2).
$
We use a blow-up method. For $x\in \Omega_{\psi_\varepsilon}(\bar{x},\frac{3}{2\varepsilon_0})$, let
\[
\tilde{u}_\varepsilon(x) =\frac{u_\varepsilon(\varepsilon x)-u_\varepsilon(0)}{\varepsilon}\quad\text{and}\quad \tilde{g}(x)=\frac{g(\varepsilon x)-g(0)}{\varepsilon}.
\]
It is easy to check that
\[ 
\cL_1 \tilde{u}_\varepsilon = 0 \;\text{ in }\;\Omega_{\psi_\varepsilon}(\bar{x},\frac{3}{2\varepsilon_0}),\quad \tilde{u}_\varepsilon=\tilde{g}\;\text{ on }\;\partial \Omega_{\psi_\varepsilon}(\bar{x},\frac{3}{2\varepsilon_0}).
\] 
and
\begin{equation*}   
\norm{\nabla\tilde{g}}_{L^\infty(\partial\Omega_{\psi_\varepsilon}(\bar{x},\frac{3}{2\varepsilon_0}))}+\int^1_0\frac{\varrho_{\nabla  \tilde{g}}(t,\partial\Omega_{\psi_\varepsilon}(\bar{x},\frac{3}{2\varepsilon_0}))}{t}dt<+\infty.
\end{equation*}

From above condition, boundary $C^1$ estimates with Dini mean oscillation of $\mathbf{A}$ (See, \cite[Theorem 1.5.]{DEK18}, third paragraph of \cite[p. 453]{DEK18}) and  boundary Caccioppoli's inequality, we have
\begin{align*}
&\norm{\nabla \tilde{u}_\varepsilon }_{L^\infty (\Omega_{\psi_\varepsilon}(\bar{x},\frac{9}{8\varepsilon_0}))}\\
&\leq C\Bigg\{  \bigg(\fint_{\Omega_{\psi_\varepsilon}(\bar{x},\frac{5}{4\varepsilon_0})}\abs{  \nabla \tilde{u}_\varepsilon}\bigg) +\norm{\nabla \tilde{g}}_{L^\infty(\partial \Omega_{\psi_\varepsilon}(\bar{x},\frac{5}{4\varepsilon_0}))}+\int^1_0\frac{\varrho_{\nabla \tilde{g}}(t,\partial \Omega_{\psi_\varepsilon}(\bar{x},\frac{5}{4\varepsilon_0}))}{t}dt  \Bigg\}\\
&\leq C\Bigg\{  \bigg(\fint_{\Omega_{\psi_\varepsilon}(\bar{x},\frac{3}{2\varepsilon_0})}\abs{  \tilde{u}_\varepsilon}^2\bigg)^{1/2} +\norm{\nabla \tilde{g}}_{L^\infty(\partial \Omega_{\psi_\varepsilon}(\bar{x},\frac{3}{2\varepsilon_0}))}+\int^1_0\frac{\varrho_{\nabla \tilde{g}}(t,\partial \Omega_{\psi_\varepsilon}(\bar{x},\frac{3}{2\varepsilon_0}))}{t}dt  \Bigg\}
\end{align*}
where $C=C(n,m,\lambda,\Lambda,\omega_{\mathbf{A}},\Omega)$.
Since above inequality with rescaling,  Lemma \ref{lemma06} with change coordinate, it follows for $x\in\Omega(\bar{x},\frac{9\varepsilon}{8\varepsilon_0})$ that
\begin{align*}
&|\nabla u_\varepsilon(x)|\\
&\leq  C\Bigg\{ \frac{1}{\varepsilon} \bigg(\fint_{\Omega(\bar{x},\frac{3\varepsilon}{2\varepsilon_0})}\abs{   u_\varepsilon(\cdot)-  u_\varepsilon(0)}^2\bigg)^{1/2} +\norm{\nabla g}_{L^\infty(\partial \Omega (\bar{x},\frac{3\varepsilon}{2\varepsilon_0}))}+\int^1_0\frac{\varrho_{\nabla g}(t,\partial \Omega (\bar{x},\frac{3\varepsilon}{2\varepsilon_0}))}{t}dt  \Bigg\}\\
& \leq \frac{C}{\varepsilon} \bigg( \fint_{\Omega(\bar{x},\frac{3\varepsilon}{2\varepsilon_0})}|u_\varepsilon(x)-g(0)-\sum^n_{j=1}\Phi^\beta_{\varepsilon,j}\frac{\partial g}{\partial x_j}(0)|^2 \bigg)^{1/2} 
 \\
&~~~~+ \frac{C}{\varepsilon} \bigg( \fint_{\Omega(\bar{x},\frac{3\varepsilon}{2\varepsilon_0})}|\sum^n_{j=1}\Phi^\beta_{\varepsilon,j}\frac{\partial g}{\partial x_j}(0)|^2 \bigg)^{1/2}+ C\Bigg\{  \norm{\nabla g}_{L^\infty(\partial\Omega(0,2))}+\int^1_0\frac{\varrho_{\nabla g}(t,\partial\Omega(0,2))}{t}dt  \Bigg\}\\
&\leq C\Bigg\{  \bigg(\fint_{\Omega(0,2)}\abs{  u_\varepsilon}^2 \bigg)^{1/2} +\norm{g}_{L^\infty(\partial\Omega(x_0,2R))} +\norm{\nabla g}_{L^\infty(\partial\Omega(0,2))}+\int^1_0\frac{\varrho_{\nabla g}(t,\partial\Omega(0,2))}{t}dt  \Bigg\}\\
&\qquad\qquad +\frac{C\norm{\nabla g}_{L^\infty(\partial\Omega(0,2))}}{\varepsilon}\bigg( \fint_{\Omega(\bar{x},\frac{3\varepsilon}{2\varepsilon_0})}| \Phi^\beta_{\varepsilon,j} |^2 \bigg)^{1/2}.
\end{align*}
Simliar to \eqref{eq11011wed}, we obtain that
\begin{equation*} 
\bigg( \fint_{\Omega(\bar{x},\frac{3\varepsilon}{2\varepsilon_0})}| \Phi^\beta_{\varepsilon,j} |^2 \bigg)^{1/2}  \leq C \varepsilon.
\end{equation*}
Thus, \eqref{lip_esti_bdry} is proved for $0<d_x<\varepsilon/\varepsilon_0$ from by above two inequalities with standard (convexity) argument.

\end{proof}

\section{Uniform Lipschitz estimates} \label{section5}
In this section, we establish improved estimates for Green's function $G$ and turn to uniformly Lipschitz estimates for the main theorem.

\begin{theorem} \label{thm02}
Let $\Omega$ be a bounded $C^{1,Dini}$ domain in $\bR^n$. Assume the coefficients $\mathbf{A}=(a^{ij})$ of the operator $\mathcal{L}_\varepsilon$ satisfy the codition \eqref{perodicity}, \eqref{ellipticity} and are of Dini mean oscillation in $\mathbb{R}^n$. Then, the Green's function (matrix) $G_\varepsilon(x,y)=(G^{ij}_\varepsilon(x,y))^m_{i,j=1}$  for $x,~y\in\Omega$ $(x\neq y)$
satisfies
\begin{align}
|\nabla_x G_\varepsilon(x,y)|+&|\nabla_y G_\varepsilon(x,y)|\leq C|x-y|^{1-n} \label{eq12061wed}\\
|\nabla_x \nabla_y G_\varepsilon(x,y)|&\leq C|x-y|^{-n} \label{eq12062wed}      
\end{align}
where $C=C(n,m,\lambda,\Lambda,\Omega,\omega_{\mathbf{A}})$.
\end{theorem}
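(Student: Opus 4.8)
The plan is to obtain both \eqref{eq12061wed} and \eqref{eq12062wed} from the uniform Lipschitz estimates already proved --- the interior estimate (Theorem~\ref{thm00}) and the boundary estimate (Proposition~\ref{prop02}) --- applied to $G_\varepsilon(\cdot,y)$, which away from its pole is a weak solution of $\cL_\varepsilon[\,\cdot\,]=0$ vanishing on $\partial\Omega$, together with the size and oscillation bounds \eqref{eq12051thu}--\eqref{eq12053thu} of Lemma~\ref{green}. Since Theorem~\ref{thm00} and Proposition~\ref{prop02} carry constants independent of $\varepsilon$, the same will be true of the bounds produced here. Throughout I write $r=\abs{x-y}$ and $d_z=\dist(z,\partial\Omega)$, and at each stage I split into an interior case $d_\bullet\gtrsim r$ and a boundary case $d_\bullet\ll r$, the threshold being a small dimensional constant.

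\emph{Step 1 (first-order bound).} Put $u:=G_\varepsilon(\cdot,y)-G_\varepsilon(x,y)$, a weak solution of $\cL_\varepsilon u=0$ in $B(x,r)$ (which excludes $y$) that vanishes on $\partial\Omega\cap B(x,r)$. By the oscillation estimate \eqref{eq12053thu} one has $\abs u\le Cr^{2-n}$ on $B(x,r/4)$, uniformly in the dimension (for $n=2$ this reads $\abs u\le C$, which is why one subtracts the constant $G_\varepsilon(x,y)$ rather than using the logarithmically growing pointwise bound on $G_\varepsilon$ itself). If $d_x\gtrsim r$, apply Theorem~\ref{thm00} with $f\equiv0$ on a ball $B(x,\rho)$ with $\rho\sim r$ to get
\[
\abs{\nabla_x G_\varepsilon(x,y)}=\abs{\nabla u(x)}\le \frac{C}{\rho}\Big(\fint_{B(x,2\rho)}\abs u^2\Big)^{1/2}\le Cr^{1-n}.
\]
If $d_x\ll r$, take $x_0\in\partial\Omega$ with $\abs{x-x_0}=d_x$; then $\abs{x_0-y}\gtrsim r$, so $y$ lies outside a surface ball $\Omega(x_0,\rho)$ with $\rho\sim r$ on which $\abs u\le Cr^{2-n}$, and since $x\in\Omega(x_0,\rho/2)$, Proposition~\ref{prop02} with $g\equiv0$ gives $\abs{\nabla_x G_\varepsilon(x,y)}\le Cr^{1-n}$. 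Because $\mathbf A$ is symmetric, $\cL_\varepsilon$ is self-adjoint and $G_\varepsilon(x,y)=G_\varepsilon(y,x)\tran$, so exchanging the roles of $x$ and $y$ yields $\abs{\nabla_y G_\varepsilon(x,y)}\le Cr^{1-n}$; together these give \eqref{eq12061wed}.

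\emph{Step 2 (mixed bound).} Fix $y\in\Omega$ and a coordinate direction $e_k$. Differentiating the relation $\cL_\varepsilon G_\varepsilon(\cdot,y)=0$ on $\Omega\setminus\{y\}$ and the boundary relation $G_\varepsilon(\cdot,y)\equiv0$ on $\partial\Omega$ in the \emph{parameter} $y_k$, one sees that $V:=\partial_{y_k}G_\varepsilon(\cdot,y)$ is, columnwise, a weak solution of $\cL_\varepsilon V=0$ in $\Omega\setminus\{y\}$ with $V=0$ on $\partial\Omega$. Now run the dichotomy of Step~1 at the point $x$, with $r=\abs{x-y}$: on a ball (or surface ball) of radius $\sim r$ about $x$ that avoids $y$, every point $z$ satisfies $\abs{z-y}\sim r$, so Step~1 gives $\abs{V(z)}=\abs{\partial_{y_k}G_\varepsilon(z,y)}\le C\abs{z-y}^{1-n}\le Cr^{1-n}$, and Theorem~\ref{thm00} (if $d_x\gtrsim r$) or Proposition~\ref{prop02} with $g\equiv0$ (if $d_x\ll r$) yields
\[
\abs{\nabla_x\partial_{y_k}G_\varepsilon(x,y)}=\abs{\nabla V(x)}\le \frac{C}{r}\Big(\fint\abs V^2\Big)^{1/2}\le Cr^{-n}.
\]
As $e_k$ is arbitrary, this is \eqref{eq12062wed}.

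\emph{Main obstacle.} The point requiring genuine care is the differentiation in the parameter $y$ in Step~2: one must show that $\partial_{y_k}G_\varepsilon(\cdot,y)$ exists, is continuous off the diagonal, and actually solves the homogeneous problem assigned to it. I would run the standard difference-quotient argument: the quotients $V_h:=h^{-1}\big(G_\varepsilon(\cdot,y+he_k)-G_\varepsilon(\cdot,y)\big)$ are, by \eqref{eq12061wed}, bounded locally uniformly in small $h$ on compact subsets of $\Omega\setminus\{y\}$, each solves $\cL_\varepsilon V_h=0$ off $\{y,\,y+he_k\}$ with zero boundary values, and Caccioppoli's inequality together with weak $W^{1,2}$-compactness and uniqueness identify the limit as $V$; the interior and boundary $C^1$ estimates for $\mathsf{DMO}$ coefficients (\cite{DK17,DEK18}) then upgrade the resulting $L^2$-type control to the pointwise gradient bounds used above, and also supply the continuity of $\nabla_x G_\varepsilon$ and $\nabla_y G_\varepsilon$ off the diagonal that makes \eqref{eq12061wed}--\eqref{eq12062wed} meaningful. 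A secondary point worth flagging is that in dimension $n=2$ one must feed \eqref{eq12053thu}, rather than the pointwise size bound on $G_\varepsilon$, into Step~1 so as to avoid a spurious logarithmic factor.
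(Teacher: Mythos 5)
Your proposal is correct and follows essentially the same route as the paper: combining the interior estimate (Theorem \ref{thm00}) and the boundary estimate (Proposition \ref{prop02}) into a local Lipschitz bound for $\cL_\varepsilon$-solutions vanishing on $\partial\Omega$ near $x$, applying it first to $G_\varepsilon(\cdot,y)$ (with the constant subtraction handling $n=2$ via \eqref{eq12053thu}) and then, using symmetry, to $\nabla_y G_\varepsilon(\cdot,y)$ to obtain \eqref{eq12062wed}. Your added details --- the explicit interior/boundary dichotomy and the difference-quotient justification that $\partial_{y_k}G_\varepsilon(\cdot,y)$ solves the homogeneous problem --- are points the paper leaves implicit, but they do not change the argument.
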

\begin{proof}
For fixed $x,~y\in \Omega$ with $x\neq y$, let $R=\frac{1}{8}\abs{x-y}$ and recall that $G^{*ij}_{\varepsilon}(x,y)=G^{ji}_\varepsilon(y,x)$. By Theorem \ref{thm00} and Proposition \ref{prop02}, we see that
\begin{align}  \label{eq12064wed}
\norm{\nabla u_\varepsilon}_{L^\infty(\Omega(x,R))}\leq  \frac{C}{R}   \bigg(\fint_{\Omega(x,2R)}\abs{  u_\varepsilon}^2 \bigg)^{1/2} ,
\end{align}
where $u_\varepsilon$ satisfying $\cL_\varepsilon u_\varepsilon=0$ in $\Omega_{2R}(x)$ and $u_\varepsilon=0$ on $\partial \Omega_{2R}(x)$. We let $u=G_\varepsilon(\cdot,y)$ if $n\geq 3$ and $u=G_\varepsilon(\cdot,y)-G_\varepsilon(z,y)$ with $z\in \Omega_{2R}(x)$ if $n=2$.  Then \eqref{eq12061wed} follows from \eqref{eq12064wed}, \eqref{eq12051thu} and \eqref{eq12053thu}. Moreover, by letting $u=\nabla_yG_\varepsilon(\cdot, y)$, which satisfies \eqref{eq12064wed}, we can derive \eqref{eq12062wed} from both \eqref{eq12064wed} and \eqref{eq12061wed}.


\end{proof}

\subsection{Proof of Theorem \ref{thm01}}
Without of generality, we may assume that $u_\varepsilon(x_0)=g(x_0)=0$ for any fixed a point $x_0\in \partial \Omega$.
Let
\begin{align*}
v_{1,\varepsilon}(x)=\int_\Omega G_\varepsilon(x,y)F(y)dy\quad\text{and}\quad
v_{2,\varepsilon}(x)=-\int_\Omega \frac{\partial}{\partial y_i} G^{\alpha\beta}_\varepsilon(x,y)f^\beta_i(y)dy.
\end{align*}
Then, we observe that
\begin{align*}
\cL_\varepsilon v_{1,\varepsilon}=F \;\text{ in }\;\Omega,\quad  v_{1,\varepsilon}=0\;\text{ on }\;\partial \Omega,\\
\cL_\varepsilon v_{2,\varepsilon}=\di f \;\text{ in }\;\Omega,\quad  v_{2,\varepsilon}=0\;\text{ on }\;\partial \Omega.
\end{align*}
Moreover, we note that, by Theorem \ref{thm02},
\begin{align*}
\norm{\nabla v_{1,\varepsilon}}_{L^\infty(\Omega)}&\leq C\norm{F}_{L^p(\Omega)},\quad \text{for}\;p>n\\
\norm{\nabla v_{2,\varepsilon}}_{L^\infty(\Omega)}&\leq C\int_\Omega |\nabla_y G_\varepsilon(x,y)||f(y)-f(x)|dy\leq C\int^1_0\frac{\varrho_f(t)}{t}dt,
\end{align*}
where $C=C(n,m,\lambda,\Lambda,\Omega,\omega_{\mathbf{A}})$. Thus, by $v_{1,\varepsilon}$ and $v_{2,\varepsilon}$, we may also assume that $F=0$ and $f=0$ in Theorem \ref{thm01}. Then, we can conclude Theorem \ref{thm01} from Theorem \ref{thm00}, Proposition \ref{prop02}, and Lemma \ref{lemmaholder} with covering argument.
\qed
 

\end{document}